\providecommand{\Pih}{{\Pi_h}}
\providecommand{\supp}{{\mathrm{supp}\,}}
\DeclareMathOperator*{\osc}{osc}
\providecommand{\dx}{\,\mathrm{d}x}
\providecommand{\dy}{\,\mathrm{d}y}
  \providecommand{\divergence}{\nabla\cdot}
  \providecommand{\support}{{\operatorname{supp}}}
  \providecommand{\setR}{\ensuremath{\mathbb{R}}}
  \providecommand{\normtmp}[2]{{#1\lVert{#2}#1\rVert}}
  \providecommand{\norm}[1]{\normtmp{}{#1}}
  \providecommand{\normmtmp}[2]{{#1\lvert\hspace{-0.07em}#1\lvert\hspace{-0.07em}#1\lvert{#2}
    #1\rvert\hspace{-0.07em}#1\rvert\hspace{-0.07em}#1\rvert}}
  \providecommand{\normm}[1]{\normmtmp{}{#1}}
  \providecommand{\abstmp}[2]{{#1\lvert{#2}#1\rvert}}
  \providecommand{\abs}[1]{\abstmp{}{#1}}
  \providecommand{\bigabs}[1]{\abstmp{\big}{#1}}
  \providecommand{\Bigabs}[1]{\abstmp{\Big}{#1}}
  \providecommand{\skptmp}[3]{{\ensuremath{#1\langle {#2}, {#3} #1\rangle}}}
  \providecommand{\skp}[2]{\skptmp{}{#1}{#2}}
  \providecommand{\settmp}[2]{{#1\{{#2}#1\}}}
  \providecommand{\set}[1]{\settmp{}{#1}}
  \providecommand{\meantmp}[2]{#1\langle{#2}#1\rangle}
  \providecommand{\mean}[1]{\meantmp{}{#1}}
  \providecommand{\Poincare}{{Poincar{\'e}}\xspace}
  \providecommand{\Xint}[1]{\mathchoice
    {\XXint\displaystyle\textstyle{#1}}%
    {\XXint\textstyle\scriptstyle{#1}}%
    {\XXint\scriptstyle\scriptscriptstyle{#1}}%
    {\XXint\scriptscriptstyle\scriptscriptstyle{#1}}%
    \!\int}
  \providecommand{\XXint}[3]{{\setbox0=\hbox{$#1{#2#3}{\int}$}
      \vcenter{\hbox{$#2#3$}}\kern-.5\wd0}}
  \providecommand{\dashint}{\mathop{\Xint-}}
\begin{document}

\title{Uniform H\"older-norm bounds for finite element approximations of second-order elliptic equations}

\author{%
{\sc Lars Diening}\thanks{Email: lars.diening@uni-bielefeld.de},\\
University Bielefeld,\\
Universit\"atsstrasse 25, 33615 Bielefeld, Germany\\[6pt]
{\sc and}\\[6pt]
{\sc Toni Scharle}\thanks{Corresponding author. Email: toni.scharle@queens.ox.ac.uk}
{\sc and}
{\sc Endre S\"uli}\thanks{Email: endre.suli@maths.ox.ac.uk}\\[2pt]
Mathematical Institute, University of Oxford,\\ Woodstock Road, Oxford OX2 6GG, UK
}

\maketitle

\begin{abstract}
{We develop a discrete counterpart of the De Giorgi--Nash--Moser theory, which provides uniform H\"older-norm bounds on continuous piecewise affine
finite element approximations of second-order linear elliptic problems of the form $-\divergence(A\nabla u)=f-\divergence  F$ with $A\in L^\infty(\Omega;\setR^{n\times n})$ a uniformly elliptic matrix-valued function,
$f\in L^{q}(\Omega)$, $F\in L^p(\Omega;\setR^n)$, with $p > n$ and $q > n/2$, on $A$-nonobtuse shape-regular triangulations, which are not required to be quasi-uniform, of a bounded polyhedral Lipschitz domain $\Omega \subset \setR^n$.}
{elliptic differential equations; finite element methods; discrete $C^\alpha$ regularity.}
\end{abstract}

\begin{center}
\textbf{Dedicated to the memory of John W. Barrett}
\end{center}

\section{Introduction}
\label{sec;introduction}

    Given a bounded domain $\Omega\subset\setR^n$, a uniformly elliptic matrix-valued function $A\in L^\infty(\Omega;\setR^{n\times n})$, a function $f\in L^q(\Omega)$ with $q>{n}/{2}$ and a vector-valued function $F \in L^p(\Omega;\setR^n)$ with $p>n$, it is well known from the work of De Giorgi, Nash, and Moser that weak solutions $u\in W^{1,2}_0(\Omega)$ to the elliptic boundary-value problem
    \begin{alignat}{2}\label{eq:aPoisson}
        \begin{aligned}
          -\divergence  (A\nabla u)&=f - \divergence  F &&\quad \text{ on }\Omega,\\
          u& = 0 &&\quad \text{ on }\partial\Omega
        \end{aligned}
    \end{alignat}
    are in fact H\"older-continuous. The combination of the De Giorgi--Nash--Moser iteration technique based on level sets and a Caccioppoli-type inequality, which estimates the norm of $\abs{\nabla u}$ locally in terms of the norms of $u$ and $f$, is in fact flexible enough to be applied to a variety of nonlinear elliptic or parabolic problems. For further details we refer, for example, to \cite{DiB93} and \cite{DSS19}.

    While the finite element method is one of the most general and powerful techniques for the numerical approximation of solutions to partial differential equations, there is currently no discrete counterpart of the De-Giorgi--Nash--Moser theory for elliptic boundary-value problems of the form \eqref{eq:aPoisson} under the regularity hypotheses on the functions $A$, $f$ and $F$ stated above.  Our aim here is to fill this gap by identifying conditions under which De-Giorgi-type regularity results hold in the discrete setting, for continuous piecewise affine finite element approximations of the problem \eqref{eq:aPoisson}.

   We begin by surveying the related literature. The first step towards proving H\"older regularity is usually a local $L^\infty$-norm bound.
   The earliest result in this direction in the finite element literature appears in the work of \cite{Nitsche}, who proved an $\mathcal{O}(h)$ error bound in the $L^\infty(\Omega)$-norm
   for elliptic equations of the form $-\nabla \cdot (A \nabla u) + cu = f$ in two space dimensions with $A \in W^{1,\infty}(\Omega;\setR^{2 \times 2})$, $c \in L^\infty(\Omega)$, $f \in L^2(\Omega)$
   on convex domains $\Omega$, subject to a homogeneous Dirichlet boundary condition, and a continuous piecewise affine approximation on $\alpha-\kappa$-regular triangulations of granularity $h>0$ (i.e. the size of any angle in the triangulation is bounded below by $\alpha$,  and the ratio of the side-lengths of any two triangles in the triangulation is bounded above by $\kappa$). It was proved by \cite{Helfrich} that for $f \in L^2(\Omega)$ and a continuous piecewise affine finite element approximation this error bound cannot be improved.
   Subsequently, \cite{CR} extended Nitsche's $\mathcal{O}(h)$ error bound in the $L^\infty(\Omega)$-norm to $n$ space dimensions assuming that $f \in L^q(\Omega)$ and $q > n/2$, on regular simplicial subdivisions of \textit{nonnegative type}; in the case of Poisson's equation in two space dimensions a triangulation is guaranteed to be of nonnegative type if all the angles of the triangles of the triangulation
are $\leq \pi/2$.  The question arose therefore whether an error order $\mathcal{O}(h^2)$ with $f \in L^{\infty}(\Omega)$ and a continuous piecewise affine finite element approximation could perhaps be proved, as numerical experiments conducted at the time appeared to indicate that such an assertion might be true. On a polygonal domain in $\setR^2$ with a quasi-uniform triangulation, $f\in L^\infty(\Omega)$ and a finite element space consisting of piecewise affine functions, \cite{Nat75} was able to prove an $\mathcal{O}(h^{2-\epsilon})$ bound on the approximation error in the $L^\infty(\Omega)$-norm, for Poisson's equation $-\Delta u = f$ subject to a homogeneous Dirichlet boundary condition using a technique based on weighted Sobolev spaces. Soon thereafter \cite{SchWah77} proved a local $L^\infty$-best-approximation property. They analyzed  approximate solutions to the elliptic equation
    \begin{equation*}
        -\divergence  (A(x)\nabla u)+b(x)\cdot\nabla u+d(x)u=f
    \end{equation*}
    for smooth uniformly positive definite matrix-valued functions $A$, vector-valued functions $b$, and scalar-valued functions $c$ on a quasi-uniform and shape-regular triangulation with a few additional technical assumptions on the finite element space that are, for example, satisfied by Lagrange or Hermite elements. Their final result is the following estimate between the analytical solution $u$ and its finite element approximation $u_h$ on a triangulation of $\Omega$ of granularity $h \in (0,1)$:
    \begin{equation*}
        \norm{u-u_h}_{L^\infty(\Omega_1)}\leq C\left(\left(\log \frac{1}{h}\right)^{\overline{r}}\norm{u-\chi}_{L^\infty(\Omega)}+\norm{u-u_h}_{W^{-s,q}(\Omega)}\right).
    \end{equation*}
    Here $W^{-s,q}(\Omega)$ is the dual space of the Sobolev space $W^{s,q'}_0(\Omega)$ with $s>0$, $1<q<\infty$, and $\frac {1}{q}+\frac{1}{q'}=1$; the exponent $\overline{r}$ is $0$ if the optimal approximation order in terms of $h$ with which finite element functions can approximate functions in the $L^q(\Omega)$ norm is $3$ or higher, and is $1$ if this order is $2$ (as is the case for a continuous piecewise affine finite element approximation); $\chi$ is an arbitrary finite element function; $\Omega_1 \Subset \Omega$ and $C$ is a positive constant independent of $h$ and $\chi$. The proof of this result is based on pointwise estimates using the discrete Green's function. This also implies that its generalization to nonlinear equations, using the same technique, is impossible.

    Subsequently \cite{Hav84} and \cite{BLR} proved that with a continuous piecewise affine finite element
approximation the error $\|u-u_h\|_{L^\infty(\Omega)}$ is \textit{not} of order $O(h^2)$ in general, even if the data are smooth; for higher degree piecewise polynomial spaces an optimal order error bound in the $L^\infty(\Omega)$ norm, without the additional logarithmic factor, was already known to hold (cf. \cite{Scott}).

A uniform H\"older-norm bound on the sequence of finite element approximations can also be obtained from a uniform $W^{1,p}(\Omega)$-norm bound for $p>n$ thanks to the continuous embedding of $W^{1,p}(\Omega)$ into $C^\alpha(\overline\Omega)$ with $\alpha = 1 - n/p$ guaranteed by Morrey's embedding theorem.
\cite{RS} showed that the Ritz projection onto spaces of continuous piecewise affine finite elements in two space dimensions is bounded in the Sobolev space $W^{1,p}(\Omega)$ for $2 \leq  p \leq \infty$ on triangulations which satisfy the condition that each triangle in the triangulation contains a circle of radius $c_1 h$ and is contained in a circle of radius $c_2h$,
with positive constants $c_1$ and $c_2$; they used this to prove that for functions in
$W^{1,p}_0(\Omega) \cap W^{2,p}(\Omega)$ the approximation error behaves like $O(h)$ in the norm of $W^{1,p}(\Omega)$ for
$2 \leq  p  \leq \infty$, and like $O(h^2)$ in the norm of $L^p(\Omega)$ for $2 \leq p < \infty$. In all these cases the additional logarithmic factor, which appeared in previously published error bounds for continuous piecewise affine finite elements, was shown not occur.

More generally, and closer to the weak regularity assumptions on the data considered herein, for elliptic problems of the form $-\nabla \cdot (A \nabla u) = f - \nabla \cdot F$,  with $A \in L^\infty(\Omega; \setR^{n\times n})$ a uniformly elliptic matrix-valued function, $f \in L^{\frac{pn}{p+n}}(\Omega)$ and $F \in L^p(\Omega; \setR^{n\times n})$, $p \in (1,\infty)$, subject to a homogeneous Dirichlet boundary condition, on a bounded open convex polytopal domain $\Omega \subset \mathbb{R}^n$, $n \in \{2,3\}$,  as a direct consequence of Proposition 8.6.2 in \cite{BS2008} and Theorem 5.1 of the work of Grisvard \cite{Grisvard}, one obtains a uniform $W^{1,p}(\Omega)$ norm bound on a sequence of finite element approximations on quasi-uniform triangulations, for all $p \in (2, 2+\varepsilon)$ for some, possibly small $\varepsilon>0$. This can be seen as a discrete counterpart of a Meyers-type regularity estimate. Hence, in two space dimensions at least ($n=2$), thanks to Morrey's embedding theorem a uniform H\"older-norm bound on the sequence of finite element approximations on quasi-uniform triangulations directly
follows (cf. Theorem 3.20 in \cite{KPS17}). In dimensions $n \geq 3$, however, such an indirect argument for deriving a uniform H\"older-norm bound does not work. Our aim here is therefore to develop a discrete De Giorgi theory that will directly yield such uniform H\"older-norm bounds, without assuming quasi-uniformity of the triangulation. For continuous piecewise affine finite element approximations of
Laplace's equation $\Delta u=0$ at least, \cite{AguCaf86} proved via a De-Giorgi-type argument an $h$-uniform $C^\alpha$-bound, assuming quasi-uniformity, shape regularity, and uniform acuteness of the triangulation. Their paper asserts, without providing a proof, that the result generalizes to more complicated equations.

Yet another approach is to deduce a uniform H\"older-norm bound from a uniform $W^{1,\infty}(\Omega)$-norm bound. \cite{GLRS09} considered the finite element approximation of the Poisson problem
      \begin{equation}\label{eq:Poisson}
        \begin{aligned}
            -\Delta u &= f \quad \text{ in } \Omega,\\
            u&=0 \quad \text{ on }\partial\Omega
        \end{aligned}
    \end{equation}
in three space dimensions. They established a best approximation result in the $W^{1,\infty}(\Omega)$ norm for convex polyhedral domains. Previous results, based on weighted $L^2(\Omega)$ norm bounds required $W^{2,p}(\Omega)$ regularity for $p>3$, which imposed an upper bound on the dihedral angles of the domain. The approach of \cite{GLRS09} proceeds by establishing
sharp H\"{o}lder-norm bounds on the first partial derivatives and the second mixed derivatives of the Green's function, requiring only $W^{1,\infty}(\Omega)$ regularity, thus avoiding the maximum angle condition.
A quasi-uniform family of triangulations is assumed in conjunction with a conforming finite element space $S_h$ consisting of continuous piecewise polynomials of degree $k\geq 1$ and a smooth right-hand side $f$, in order to prove that
    \begin{equation}\label{eq:bestApprox}
        \norm{\nabla(u-u_h)}_{L^\infty(\Omega)}\leq C \inf_{\chi\in S_h}\norm{\nabla (u-\chi)}_{L^\infty(\Omega)},
    \end{equation}
    where $u_h\in S_h$ is the finite element approximation of the analytical solution $u$ of the problem \eqref{eq:Poisson}.
    By taking $\chi\equiv 0$ followed by the application of the triangle inequality this implies that
    \begin{equation*}
        \norm{\nabla u_h}_{L^{\infty}(\Omega)}\leq C \norm{\nabla u}_{L^\infty(\Omega)}.
    \end{equation*}
Together with standard regularity theory for solutions to equation \eqref{eq:Poisson} with a smooth right-hand side $f$ and the embedding $W^{1,\infty}(\Omega) \hookrightarrow C^\alpha(\overline{\Omega})$ this inequality implies a uniform H\"older-norm bound on $u_h$, on quasi-uniform families of triangulations at least.

     \cite{Dol99} proved \textit{a priori} bounds and optimal error estimates on quasi-uniform families of triangulations
     for continuous piecewise affine finite element approximations of elliptic systems in divergence form with continuous coefficients contained in Campanato spaces
and extended the bounds of  \cite{RS} to elliptic systems.

    All of the results we have cited so far exclude highly graded adaptively refined triangulations. The most common condition that is required to hold within a local adaptive refinement process (see, for example, \cite{ste08}) is shape-regularity (see Definition \ref{def:shapeRegular}). This raises the question whether any of the above results can be derived assuming shape-regularity of the family of triangulations only. We note in this respect that \cite{DLSW12} have obtained a best approximation property as in inequality \eqref{eq:bestApprox} for solutions to the Poisson problem \eqref{eq:Poisson} for $f\in L^\infty(\Omega)$ on slightly graded triangulations, i.e. the local mesh-size is only varying slowly, in two and three space dimensions. The approach is again based on Galerkin orthogonality and pointwise bounds on the discrete Green's function.

    Let us now briefly discuss our results. Theorem \ref{thm:HoelderBoundary} provides a uniform \textit{a priori} H\"older-norm bound on sequences
    of continuous piecewise affine finite element approximations of the scalar elliptic problem $-\divergence(A\nabla u)=f-\divergence  F$ subject to a
    homogeneous Dirichlet boundary condition. We shall require shape-regularity only, thus admitting highly graded finite element
    triangulations. Our bounds require that $A\in L^\infty(\Omega;\setR^{n\times n})$ only; in particular, we
    shall not demand H\"older-continuity or Campanato-regularity of $A$. Our results are therefore a useful first step toward the development of similar bounds for more
    complex, nonlinear elliptic problems, such as $p$-Laplace-type equations that arise in mathematical models of non-Newtonian fluids (\cite{BL4, BL3, BL2} and \cite{BL1}). As a matter of fact, it is the finite element approximation of the models of non-Newtonian fluids considered in \cite{KPS17} and \cite{KoSu19} that motivated the work reported in this paper. Little seems to be known about uniform $L^\infty$ and $C^\alpha$ bounds on shape-regular families of triangulations for such nonlinear elliptic equations. For approximate solutions of the $p$-Laplace equation $-\divergence(\abs{\nabla u}^{p-2}\nabla u)=0$ a discrete maximum principle has been proved by \cite{DieKreSch13}.

     The discrete De-Giorgi-type iteration developed here is flexible enough to be applicable to continuous piecewise affine approximations of uniformly elliptic nonlinearities; see Theorem \ref{thm:discreteNonLinear}.
     On the other hand, we have to assume the existence of a function $G \in L^p(\Omega;\setR^n)$ such that $\nabla \cdot (G \pm F) \leq 0$ in the
     sense of distributions (cf. Definition \ref{def:AssumptionStar}); this assumption is clearly satisfied if $F$ is a constant vector-field,
     but it also holds in a number of nontrivial cases.
     We also require the triangulation to be $A$-nonobtuse. Even when $A$ is the identity matrix, in which case $A$-nonobtuseness becomes the
     standard requirement of nonobtuseness, this condition is restrictive. In general, common algorithms (for example the one proposed in \cite{ste08})
     for local mesh-refinement produce obtuse angles. However, our analysis avoids imposing the assumption of uniform acuteness,
     which was required by \cite{AguCaf86}; this allows us to admit important special cases such as the $n$-dimensional hypercube with a Kuhn-simplex triangulation that gets locally refined.
     Nor do we require quasi-uniformity of the family of triangulations. To the best of our knowledge, the proof of the uniform H\"older-norm bound on sequences of continuous piecewise affine finite element approximations established herein is the first that admits highly graded triangulations.

\section{$C^\alpha$-regularity of approximate solutions to linear elliptic equations}\label{ch:Linear}
In this section we will establish \textit{a priori} regularity results for approximate solutions to linear elliptic partial differential equations. We begin with a brief overview of the `continuous' De Giorgi theory for equations of this type and introduce different meshes and mesh-conditions. Subsequent sections will focus on developing a discrete De Giorgi theory.

\subsection{Notational conventions}
Before we start we have to introduce certain notational conventions. For two nonnegative expressions $a$ and $b$ we will write $a\lesssim b$ if there is a positive constant $C$ such that $a\leq Cb$. We will write $a\gtrsim b$ if there is a constant $c>0$ such that $a\geq cb$. If we have $a\lesssim b$ and $a\gtrsim b$, we will write $a\sim b$. The maximum of two real numbers $a$ and $b$ will be denoted either by $\max\{a,b\}$ or in short by $a\vee b$. The positive part of an expression $u$ will be denoted by $u_+:=u\vee 0$.

When working with H\"older spaces, an important quantity is the oscillation of a bounded function $v:\Omega\rightarrow \setR$ on a domain $\Omega$.
For any open or closed set $A\subset\Omega$ we define
$$\osc_A v=\sup_A v-\inf_A v.$$
We will denote the Lebesgue-measure of a measurable set $A\subset\setR^n$ by $\abs{A}$ and its characteristic function by $\chi_A$.

\subsection{Auxiliary results}
\label{app1}

We summarize here some elementary inequalities which will play a crucial role in the arguments that will follow. The first of these is easily proved by induction.

\begin{lemma}[Fast geometric convergence, cf. \cite{DiB93} Ch. I, Lemma 4.1]\label{converg}
  Let $\alpha>0$, $C>0$ and  $b>1$ be real numbers and $(a_k)$ a sequence of nonnegative real numbers with the properties
  $$ 0\leq a_{k+1}\leq C b^k a_k^{1+\alpha}, \qquad 0\leq a_0\leq C^{-\frac{1}{\alpha}}b^{-\frac{1}{\alpha^2}}. $$
  Then we have that $a_k\leq C^{-\frac{1}{\alpha}} b^{-\frac{1+k\alpha}{\alpha^2}}\rightarrow 0$ as $k\rightarrow\infty$.
\end{lemma}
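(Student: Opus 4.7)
The plan is to prove the claimed bound $a_k \leq C^{-1/\alpha} b^{-(1+k\alpha)/\alpha^2}$ by induction on $k$, from which the limit $a_k \to 0$ follows immediately because $b > 1$ and the exponent $-(1+k\alpha)/\alpha^2 \to -\infty$.

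For the base case $k=0$, note that $-(1+0\cdot\alpha)/\alpha^2 = -1/\alpha^2$, so the hypothesis on $a_0$ is exactly the desired inequality. For the inductive step, assume $a_k \leq C^{-1/\alpha} b^{-(1+k\alpha)/\alpha^2}$; then the recursion gives
\begin{equation*}
a_{k+1} \;\leq\; C b^k a_k^{1+\alpha} \;\leq\; C b^k \cdot C^{-(1+\alpha)/\alpha}\, b^{-(1+\alpha)(1+k\alpha)/\alpha^2} \;=\; C^{-1/\alpha}\, b^{\,k - (1+\alpha)(1+k\alpha)/\alpha^2}.
\end{equation*}
The whole proof hinges on verifying that the exponent of $b$ here equals $-(1+(k+1)\alpha)/\alpha^2$, which reduces to the algebraic identity $k\alpha^2 - (1+\alpha)(1+k\alpha) = -(1+(k+1)\alpha)$, a routine expansion.

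There is no real obstacle; the only thing to get right is bookkeeping of exponents. Once the induction closes, the conclusion $a_k \to 0$ with the stated rate is immediate from $b > 1$. I would present the calculation in a single display, then state the limit.
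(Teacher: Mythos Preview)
Your proof is correct, and it follows exactly the approach the paper indicates: the paper simply remarks that the lemma ``is easily proved by induction'' and gives no further details, so your inductive argument with the explicit exponent bookkeeping is precisely what is intended.
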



By applying Lemma \ref{converg} to the sequence $\left(\frac {a_k}{\gamma}\right)$ we deduce the following result.

\begin{corollary}\label{cor}
 Let $\alpha>0$, $C>0$, $b>1$ and $\gamma>0$ be real numbers and $(a_k)$ a sequence of nonnegative real numbers, such that
 $$0\leq a_{k+1}\leq C b^k a_k\left(\frac {a_k}{\gamma}\right)^\alpha.$$
  Then we have that $a_k\rightarrow 0$ as $k\rightarrow\infty$ if $a_0 \leq \gamma\,  C^{-\frac 1 \alpha} b^{-\frac 1 {\alpha^2}} $.
\end{corollary}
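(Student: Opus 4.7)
The plan is simply to reduce the corollary to Lemma \ref{converg} by rescaling the sequence. Set $\tilde{a}_k := a_k/\gamma$ for each $k \geq 0$, which is a nonnegative sequence since $\gamma > 0$ and $a_k \geq 0$. Dividing the assumed recursion through by $\gamma$ gives
\begin{equation*}
0 \leq \tilde{a}_{k+1} \leq C b^k \frac{a_k}{\gamma}\left(\frac{a_k}{\gamma}\right)^{\alpha} = C b^k \tilde{a}_k^{1+\alpha},
\end{equation*}
so $(\tilde{a}_k)$ satisfies the first hypothesis of Lemma \ref{converg} with the same constants $\alpha$, $C$ and $b$.

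Next I would check the smallness condition on $\tilde{a}_0$. The assumption $a_0 \leq \gamma\, C^{-1/\alpha} b^{-1/\alpha^2}$ is exactly equivalent to $\tilde{a}_0 \leq C^{-1/\alpha} b^{-1/\alpha^2}$, i.e. the second hypothesis of Lemma \ref{converg}. Applying Lemma \ref{converg} therefore yields $\tilde{a}_k \to 0$, and hence $a_k = \gamma \tilde{a}_k \to 0$ as $k \to \infty$, which is the conclusion.

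There is no real obstacle here; the only subtlety worth flagging is dimensional consistency of the rescaling (one divides the recursion once by $\gamma$ on the left and absorbs a factor $\gamma^{-\alpha}$ into the $(a_k/\gamma)^{\alpha}$ term on the right), which is why exactly $1+\alpha$ factors of $\gamma^{-1}$ appear and the constants $C$ and $b$ in Lemma \ref{converg} carry over unchanged. Consequently the proof will be only a couple of lines.
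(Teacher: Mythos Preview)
Your proposal is correct and matches the paper's approach exactly: the paper simply states that the corollary follows by applying Lemma~\ref{converg} to the sequence $(a_k/\gamma)$, which is precisely the rescaling you carry out.
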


\begin{lemma}[cf. \cite{AguCaf86} A2]\label{lem:iteration2}
	Suppose that a sequence $(a_k)$ of nonnegative numbers satisfies
	$$a_{k+1}^2\leq c_k (a_k-a_{k+1}), \qquad k \geq 1,$$
	for some bounded, nonnegative sequence $(c_k)$. Then, we have that
	$$a_{k+1}\leq\frac{\sqrt{\max_{i\leq k}c_i}\sqrt{a_1}}{\sqrt{k}},\qquad \mbox{for $k\geq 1,\qquad$.}$$
\end{lemma}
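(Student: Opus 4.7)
The plan is to exploit two features hidden in the recursion: the non-negativity of $a_{k+1}^2$ forces monotonicity, and the right-hand side telescopes when summed.

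First I would set $M_k := \max_{i\leq k} c_i$, which is well-defined by the boundedness of $(c_i)$, and observe that the recursion $a_{i+1}^2 \leq c_i(a_i - a_{i+1})$ together with $a_{i+1}^2 \geq 0$ and $c_i \geq 0$ forces $a_i - a_{i+1} \geq 0$ whenever $c_i > 0$ (and the trivial case $c_i = 0$ forces $a_{i+1}=0$, hence still $a_{i+1} \leq a_i$). Thus the sequence $(a_k)$ is non-increasing for $k \geq 1$.

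Next, I would sum the recursion from $i=1$ to $i=k$, replacing each $c_i$ by the larger quantity $M_k$ on the right-hand side. The sum on the right telescopes, giving
\begin{equation*}
\sum_{i=1}^{k} a_{i+1}^2 \;\leq\; M_k \sum_{i=1}^{k} (a_i - a_{i+1}) \;=\; M_k (a_1 - a_{k+1}) \;\leq\; M_k\, a_1,
\end{equation*}
where the last inequality uses $a_{k+1} \geq 0$. Now, by monotonicity, $a_{k+1} \leq a_{i+1}$ for every $i \leq k$, so each term on the left is at least $a_{k+1}^2$, and therefore $k\, a_{k+1}^2 \leq M_k a_1$. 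Taking square roots yields exactly
\begin{equation*}
a_{k+1} \;\leq\; \frac{\sqrt{M_k}\,\sqrt{a_1}}{\sqrt{k}} \;=\; \frac{\sqrt{\max_{i\leq k} c_i}\,\sqrt{a_1}}{\sqrt{k}},
\end{equation*}
which is the stated bound.

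There is really no significant obstacle: the only subtle point is the recognition that the inequality forces monotonicity and that replacing $c_i$ by the (larger) uniform bound $M_k$ before summing is what enables the telescoping. Everything else is elementary manipulation, and no auxiliary lemma from the paper is required.
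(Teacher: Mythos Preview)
Your proof is correct and follows essentially the same approach as the paper: establish monotonicity from the recursion, sum from $1$ to $k$, bound the $c_i$ by their maximum, telescope, and use monotonicity to extract $k\,a_{k+1}^2$ on the left. The only difference is cosmetic---you justify the monotonicity step (including the $c_i=0$ case) more carefully than the paper, which simply asserts it.
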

\begin{proof}
	Since we necessarily have $a_{k}\geq a_{k+1}\geq 0$, we get via a telescoping sum
	\begin{align*}
	k\,a_{k+1}^2&\leq \sum_{i=1}^k a_{i+1}^2\leq\sum_{i=1}^kc_i (a_i-a_{i+1})
\leq \big(\max_{i\leq k} c_i\big)\left(a_1-a_{k+1}\right)\leq\big(\max_{i\leq k} c_i\big)a_1.
	\end{align*}
	Dividing by $k$ and taking the square root concludes the proof.
\end{proof}

\begin{lemma}[cf. \cite{beck2016elliptic}, Lemma B.3]\label{lem:C-alpha-iteration}
Assume that $\phi(\rho)$ is a real-valued, nonnegative, nondecreasing function defined on the interval $[0,R_1]$. Assume further that there exists a number $\sigma \in (0,1)$ such that for all $R \leq R_1$ we have
\begin{equation*}
	\phi(\sigma R)\leq (\sigma^{\alpha_1} + \kappa)\phi(R)+CR^{\alpha_2}
\end{equation*}
for some nonnegative constant $C$, some number $\kappa \geq 0$, and positive exponents $\alpha_1>\alpha_2$. Then, there exists a positive number $\kappa_0 = \kappa(\sigma,\alpha_1,\alpha_2)$ such that for $\kappa \leq \kappa_0$ and all $r \leq R \leq R_1$ we have
\begin{equation*}
	\phi(r)\leq c(\sigma,\alpha_1,\alpha_2) \left(\left(\frac{r}{R}\right)^{\alpha_2}\phi(R)+Ar^{\alpha_2}\right).
\end{equation*}
\end{lemma}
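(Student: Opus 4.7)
The estimate is Campanato's classical iteration lemma, and the argument is standard: iterate at geometrically decaying scales and then interpolate via monotonicity. First I would fix an intermediate exponent $\beta \in (\alpha_2,\alpha_1)$, say $\beta := (\alpha_1+\alpha_2)/2$, and choose the smallness threshold as $\kappa_0 := \sigma^\beta - \sigma^{\alpha_1}>0$ (positive since $\sigma<1$ and $\alpha_1>\beta$). For any $\kappa \leq \kappa_0$ we then have $\tau := \sigma^{\alpha_1}+\kappa \leq \sigma^\beta$, so the hypothesis rewrites as $\phi(\sigma R) \leq \tau\, \phi(R) + C R^{\alpha_2}$ with $\tau < \sigma^{\alpha_2}$, valid for all $R\leq R_1$.

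Second, I would iterate this inequality along $R,\sigma R,\sigma^2 R,\ldots$. A straightforward induction on $k\geq 1$ gives
$$\phi(\sigma^k R) \leq \tau^k \phi(R) + C R^{\alpha_2}\sum_{j=0}^{k-1}\tau^{k-1-j}\sigma^{j\alpha_2}.$$
Since $\tau \leq \sigma^\beta$ and $\beta>\alpha_2$, the ratio $\sigma^{\alpha_2}/\tau \geq \sigma^{\alpha_2-\beta}>1$, so the geometric sum is bounded by a constant depending only on $\sigma,\alpha_1,\alpha_2$ times its largest term $\sigma^{(k-1)\alpha_2}$. Absorbing $\sigma^{-\alpha_2}$ into the constant yields
$$\phi(\sigma^k R) \leq \sigma^{k\beta}\phi(R) + C_1 (\sigma^k R)^{\alpha_2}$$
with $C_1 = C_1(\sigma,\alpha_1,\alpha_2)$, and using $\sigma^{k\beta}\leq \sigma^{k\alpha_2}$ the first term already has the desired power structure.

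Finally, for arbitrary $r\leq R\leq R_1$ I would choose the unique $k\geq 0$ with $\sigma^{k+1}R < r \leq \sigma^k R$. Monotonicity of $\phi$ together with the elementary bounds $\sigma^k R\leq \sigma^{-1}r$ and $\sigma^{k\alpha_2}\leq \sigma^{-\alpha_2}(r/R)^{\alpha_2}$ yield
$$\phi(r)\leq \phi(\sigma^k R)\leq \sigma^{-\alpha_2}\left(\frac{r}{R}\right)^{\alpha_2}\phi(R) + C_1\sigma^{-\alpha_2}\, r^{\alpha_2},$$
which is the asserted bound with $c:=\sigma^{-\alpha_2}\max\{1,C_1\}$. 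I would read the symbol $A$ in the statement of the conclusion as a typographical substitute for the data constant $C$, since no independent quantity $A$ appears in the hypothesis. There is no genuine obstacle here; the only delicate point is the strict choice of $\beta$ in the open interval $(\alpha_2,\alpha_1)$, which simultaneously guarantees convergence of the geometric series and preserves the target exponent $\alpha_2$ in the final estimate.
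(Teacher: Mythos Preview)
Your argument is correct and is precisely the standard Campanato iteration; the paper does not actually supply its own proof of this lemma but merely cites \cite{beck2016elliptic}, so there is nothing to compare against. One small bookkeeping slip: your constant $C_1$ in the iterated estimate is not purely $C_1(\sigma,\alpha_1,\alpha_2)$ but rather $C$ times such a constant, so in the final line you should write $c\,C\,r^{\alpha_2}$ rather than absorb $C$ into $c$; this is exactly consistent with your (correct) reading of the stray symbol $A$ in the conclusion as the data constant $C$.
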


\subsection{Local H\"older continuity in the continuous case}
First, we will present a brief overview of the proof of the local H\"older continuity of weak solutions to elliptic equations.
This is, by now, a classical result in the PDE analysis literature, however since our proof of the discrete counterpart of this result
proceeds along similar, but much more technical lines, readers may find this short overview helpful, regardless. The main ideas of the proof have been used in \cite{CaVa10}.
For simplicity, we will restrict ourselves to the homogeneous case. We begin by proving the Caccioppoli inequality stated in the next theorem.

\begin{theorem}\label{thm:ContBounded0}
		Let $\Omega\subset \setR^n$ be a domain and $A\in L^\infty(\Omega;\setR^{n\times n})$ a uniformly elliptic matrix, i.e., there is a $c>0$ such that
		\begin{equation}\label{eq:uniformlyElliptic}
		 A(x)v\cdot v\geq c |v|^2
		\end{equation}
 for any $v\in \setR^n$ and $x\in\Omega$. Let $u\in W^{1,2}(\Omega)$ be a weak solution to $\divergence(A\nabla u)=0$, i.e.,
		\begin{equation}\label{eq:contWeakSol}
			\int_\Omega A\nabla u\cdot \nabla \phi\dx=0
		\end{equation}
		for all $\phi\in W^{1,2}_0(\Omega)$. Then, we have that
		\begin{equation}\label{eq:ContCac}
			\dashint_{\supp \eta} \abs{\nabla (u-c)_+}^2 \abs{\eta}^2\dx\lesssim  \dashint_{\supp \eta} \abs{(u-c)_+}^2\abs{\nabla \eta}^2\dx
		\end{equation}
		for any function $\eta\in C^\infty_0(\Omega)$ and any $c>0$, where $(u-c)_+=(u-c)\vee 0$.
\end{theorem}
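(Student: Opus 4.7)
The plan is to prove the Caccioppoli inequality by the standard technique of testing the weak formulation with a carefully chosen admissible test function. The natural candidate is $\phi = (u-c)_+ \eta^2$, which belongs to $W^{1,2}_0(\Omega)$ because $\eta \in C^\infty_0(\Omega)$ truncates the support away from $\partial\Omega$ and $(u-c)_+ \in W^{1,2}(\Omega)$ whenever $u$ is. First I would verify admissibility and record the chain-rule identity $\nabla(u-c)_+ = \chi_{\{u > c\}} \nabla u$, so that on the set $\{u > c\}$ (which is where the integrand lives) one has $\nabla u = \nabla(u-c)_+$.

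Next I would compute the gradient of the test function and substitute into the weak formulation \eqref{eq:contWeakSol}:
\begin{equation*}
0 = \int_\Omega A\nabla u \cdot \nabla\bigl((u-c)_+ \eta^2\bigr)\dx
= \int_\Omega A\nabla(u-c)_+ \cdot \nabla(u-c)_+ \, \eta^2 \dx
+ 2\int_\Omega A\nabla(u-c)_+ \cdot \nabla \eta \, (u-c)_+\,\eta \dx .
\end{equation*}
Rearranging and applying the uniform ellipticity \eqref{eq:uniformlyElliptic} to the first term, and the pointwise bound $|A| \leq \|A\|_{L^\infty}$ together with the Cauchy--Schwarz inequality to the second, would give a bound of the form
\begin{equation*}
c \int_\Omega |\nabla(u-c)_+|^2 \eta^2 \dx
\leq 2\|A\|_{L^\infty} \int_\Omega |\nabla(u-c)_+|\,\eta \cdot (u-c)_+ |\nabla \eta| \dx .
\end{equation*}

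Then I would apply Young's inequality with a small parameter $\varepsilon > 0$ of the form $ab \leq \varepsilon a^2 + \frac{1}{4\varepsilon} b^2$ to split the right-hand side into a term absorbable into the left (choosing $\varepsilon$ small enough relative to $c$ and $\|A\|_{L^\infty}$) and a remainder involving only $(u-c)_+^2 |\nabla \eta|^2$. Absorbing and dividing by the measure $|\supp \eta|$ yields the mean-integral form \eqref{eq:ContCac}, with the implicit constant depending only on $c$ and $\|A\|_{L^\infty}$.

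I do not anticipate any serious obstacle: the only points to watch are the admissibility of $(u-c)_+ \eta^2$ as a test function (standard truncation argument in $W^{1,2}$) and keeping track of constants so that the absorption is legitimate. The discrete counterpart developed in the sequel presumably has to replace the chain-rule identity $\nabla(u-c)_+ = \chi_{\{u>c\}} \nabla u$ with something more delicate on piecewise affine functions, and this is where I would expect the real difficulty to appear later on; for the continuous statement itself the proof is essentially mechanical.
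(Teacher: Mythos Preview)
Your proposal is correct and follows essentially the same approach as the paper: test \eqref{eq:contWeakSol} with $\phi=(u-c)_+\eta^2$, split the resulting expression into the elliptic term and the cross term, bound the cross term via $A\in L^\infty$ and Young's inequality with a small parameter, and absorb. The paper's proof is organized identically (labelling the two pieces $I$ and $II$), so there is nothing to add.
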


\begin{proof}
	We test equation \eqref{eq:contWeakSol} against $\phi=(u-c)_+\eta^2$. Note that $\nabla u= \nabla (u-c)=\nabla (u-c)_+$ on $\supp (u-c)_+$. This gives
	\begin{equation}\label{eq:contCac1}
	\begin{aligned}
		0&=\dashint_{\supp \eta} A \nabla u \cdot \nabla \left(\eta^2 (u-c)_+\right)\dx\\
		&=\dashint_{\supp \eta} A\nabla (u-c)_+\cdot \nabla ((u-c)_+)\eta^2\dx+\dashint_{\supp \eta} A\nabla (u-c)_+ \cdot 2(\nabla \eta) \eta (u-c)_+\dx\\
		&=:I+II.
	\end{aligned}
	\end{equation}
	We use the uniform ellipticity of $A$ to deduce that
	\begin{equation}\label{eq:contCac2}
		I\gtrsim \dashint_{\supp \eta} \abs{\nabla (u-c)_+}^2\eta^2\dx.
	\end{equation}
	The boundedness of $A$, H\"older's inequality, and Young's inequality yield
	\begin{equation}\label{eq:contCac3}
	\begin{aligned}
		\abs{II}&\lesssim \dashint_{\supp \eta}\abs{\nabla (u-c)_+}\abs{\eta}\abs{\nabla \eta}\abs{(u-c)_+}\dx\\
		&\leq \epsilon \dashint_{\supp \eta} \abs{\nabla(u-c)_+}^2\abs{\eta}^2\dx+C_\epsilon \dashint_{\supp \eta} \abs{(u-c)_+}^2\abs{\nabla \eta}^2\dx.
	\end{aligned}
	\end{equation}
Inserting the inequalities \eqref{eq:contCac2} and \eqref{eq:contCac3} into equation \eqref{eq:contCac1} and absorbing the first term of $II$ into $I$ proves the claim.
\end{proof}

\begin{theorem}\label{thm:ContBounded}
	Let $\Omega\subset \setR^n$ be a domain and $A\in L^\infty(\Omega;\setR^{n\times n})$ a uniformly elliptic matrix. Let $u\in W^{1,2}(\Omega)$ be a weak solution to $\divergence(A\nabla u)=0$. Then we have, for every $c>0$,
		\begin{equation}\label{eq:ucbound}
			\sup_{B(x_0,R)} \abs{(u-c)_+}^2\lesssim \dashint_{B(x_0,2R)}\abs{(u-c)_+}^2\dx
		\end{equation}
		for any ball $B(x_0,R)$ with $B(x_0,2R)\subset\Omega$.
	\end{theorem}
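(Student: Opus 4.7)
The plan is to run a De~Giorgi iteration, using the Caccioppoli inequality \eqref{eq:ContCac} from Theorem~\ref{thm:ContBounded0}, Sobolev's embedding, and the fast geometric convergence result of Lemma~\ref{converg} (in the form of Corollary~\ref{cor}). The goal is to show that, on dyadically shrinking balls, the truncations $(u-d_k)_+$ with slowly increasing levels $d_k\nearrow c+\kappa$ have $L^2$-averages that satisfy a superlinear recursion, and therefore vanish in the limit, provided $\kappa$ is chosen proportional to the square root of the right-hand side of \eqref{eq:ucbound}.

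First I set up the dyadic geometry. Let $R_k:=R(1+2^{-k})$, so that $B_0=B(x_0,2R)$ and $\bigcap_k B(x_0,R_k)=\overline{B(x_0,R)}$. Pick cut-offs $\eta_k\in C_0^\infty(B(x_0,R_k))$ with $\eta_k\equiv 1$ on $B(x_0,R_{k+1})$, $0\le\eta_k\le 1$, and $\abs{\nabla\eta_k}\lesssim 2^k/R$. Fix a parameter $\kappa>0$ to be chosen shortly, set the levels $d_k:=c+\kappa(1-2^{-k})$, and define
$$A_k:=\dashint_{B(x_0,R_k)}(u-d_k)_+^2\,dx.$$

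The main step is to derive a recursion of the form $A_{k+1}\le C\, b^k\,\kappa^{-2\alpha}A_k^{1+\alpha}$ with $\alpha=2/n$ and $b=4^{1+2/n}$, the constants depending only on $n$ and the ellipticity constant of $A$. Theorem~\ref{thm:ContBounded0} applied at the level $d_{k+1}$ with cut-off $\eta_k$, combined with the gradient bound on $\eta_k$, yields
$$\int_{B(x_0,R_k)}\abs{\nabla(u-d_{k+1})_+}^2\eta_k^2\,dx\lesssim\frac{4^k}{R^2}\int_{B(x_0,R_k)}(u-d_{k+1})_+^2\,dx.$$
Applying Sobolev's inequality to the compactly supported function $(u-d_{k+1})_+\eta_k$ then gives
$$\Big(\int_{B(x_0,R_{k+1})}(u-d_{k+1})_+^{2^*}\,dx\Big)^{\!2/2^*}\lesssim\frac{4^k}{R^2}\int_{B(x_0,R_k)}(u-d_{k+1})_+^2\,dx.$$
On the support of $v:=(u-d_{k+1})_+$ inside $B(x_0,R_{k+1})$ one has $(u-d_k)_+\ge d_{k+1}-d_k=\kappa\,2^{-(k+1)}$, so Chebyshev's inequality yields
$$\abs{\{v>0\}\cap B(x_0,R_{k+1})}\le\frac{4^{k+1}}{\kappa^2}\int_{B(x_0,R_k)}(u-d_k)_+^2\,dx.$$
The H\"older bound $\int_{B_{k+1}}v^2\le(\int v^{2^*})^{2/2^*}\abs{\{v>0\}\cap B_{k+1}}^{2/n}$, together with $v\le(u-d_k)_+$ and $\abs{B(x_0,R_k)}\sim R^n$, then produces the advertised recursion after dividing through by $\abs{B(x_0,R_{k+1})}$.

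With the recursion in hand, Corollary~\ref{cor} (with $\gamma:=\kappa^2$) shows that $A_k\to 0$ provided $A_0\le c_0\kappa^2$ for a small universal constant $c_0>0$. Choosing $\kappa^2:=c_0^{-1}\dashint_{B(x_0,2R)}(u-c)_+^2\,dx$ realises this smallness; monotone convergence, together with $d_k\nearrow c+\kappa$ and $B(x_0,R_k)\searrow\overline{B(x_0,R)}$, then yields $(u-c-\kappa)_+=0$ a.e.\ on $B(x_0,R)$. Hence $\esssup_{B(x_0,R)}(u-c)_+^2\le\kappa^2\lesssim\dashint_{B(x_0,2R)}(u-c)_+^2\,dx$, which is \eqref{eq:ucbound}. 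The main technical obstacle is the careful bookkeeping in the Caccioppoli--Sobolev--Chebyshev--H\"older chain: the coefficient $\kappa^{-2\alpha}$ on the right-hand side of the recursion must scale in exactly the right power of $\kappa$ so that Corollary~\ref{cor} closes with a smallness hypothesis on $A_0$ linear in $\kappa^2$, which in turn can be absorbed by the free parameter $\kappa$.
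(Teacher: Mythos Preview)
Your proof is correct and follows essentially the same De~Giorgi iteration as the paper: the same dyadic balls and levels, the same Caccioppoli--Sobolev--Chebyshev--H\"older chain leading to a superlinear recursion, and the same appeal to Corollary~\ref{cor} with the free parameter chosen proportional to the initial quantity. The only differences are cosmetic (you track $A_k=\dashint_{B(x_0,R_k)}(u-d_k)_+^2$ over shrinking balls, whereas the paper tracks $U_k=\dashint_{B(x_0,2R)}(u-c-\gamma_k)_+^2\phi_k^2$ over the fixed ball with the cut-off inside the integrand), and these are equivalent since $\phi_k\equiv 1$ on $B_{k+1}$.
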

	
	\begin{proof}
		We define $\gamma_k=\gamma_{\infty}\left(1-2^{-k}\right)$ and $B_k:=B\left(x_0,\left(1+2^{-k}\right)R\right)$ where $\gamma_{\infty}>0$ is to be chosen later. Clearly, $\gamma_0 = 0$, $\lim_{k \rightarrow \infty} \gamma_k = \gamma_\infty$, and $B(x_0,2R) = B_0 \supset B_1 \supset \cdots \supset B_k \supset B_{k+1} \supset \cdots \supset B(x_0,R)$. We then define compactly supported $C^{\infty}$-functions $\phi_k$ such that
		\begin{equation}\label{eq:DefPhikCont}		
            \begin{aligned}
             \supp \phi_k&\subset B_{k},\quad 0 \leq \phi_k \leq 1,\\
             \phi_k&\equiv 1 \text{ on } B_{k+1},\\
             \abs{\nabla\phi_k}&\lesssim R^{-1}2^{k},
            \end{aligned}
		\end{equation}
		and the sequence
		\begin{equation}\label{eq:U_kContDef}	
				U_k:=\dashint_{B(x_0,2R)} \bigabs{(u-c-\gamma_k)_+}^2\abs{\phi_k}^2\dx, \qquad k=0,1,\dots .
		\end{equation}
		We use scaling-invariant norms $\normm{\cdot}_p$ with $p \in [1,\infty)$ defined by $\normm{f}_p^p:=\dashint_{B(x_0,2R)}\abs{f}^p\dx$ and apply H\"older's inequality, the Sobolev embedding theorem, and equation \eqref{eq:ContCac} to get (assume that $n \geq 3$ for simplicity, and let $2^\star:= 2n/(n-2)$ denote the critical Sobolev index; the bounds below are easily adjusted in the case of $n=2$ to reach the desired conclusion by choosing $2^\star$ as a large positive integer):
		\begin{equation}\label{eq:U_kCont1}
			\begin{aligned}
				U_k&:\leq \normm{(u-c-\gamma_k)_+\phi_k}_{2^\star}^2\normm{\chi_{\{u-c>\gamma_k\}\cap\supp\phi_k}}_{n}^2\\
				&\lesssim R^2\normm{\nabla((u-c-\gamma_k)_+\phi_k)}_{2}^2\normm{\chi_{\{u-c>\gamma_k\}\cap\supp\phi_k}}_{n}^2\\
				&\lesssim R^2\left( \normm{\nabla((u-c-\gamma_k)_+)\phi_k}_{2}^2+ \normm{(u-c-\gamma_k)_+\nabla \phi_k}_{2}^2\right)\normm{\chi_{\{u-c>\gamma_k\}\cap\supp\phi_k}}_{n}^2\\
				&\lesssim R^2\normm{(u-c-\gamma_k)_+\nabla \phi_k}_2^2 \, \normm{\chi_{\{u-c>\gamma_k\}\cap\supp\phi_k}}_{n}^2,
			\end{aligned}
		\end{equation}
		where in the transition to the last line we applied Theorem \ref{thm:ContBounded0}. For the first factor on the right-hand side in the final line of inequality \eqref{eq:U_kCont1} we use that $\abs{\nabla \phi_k}\lesssim R^{-1}2^{k}$ and $\phi_{k-1}\equiv 1$ on $\supp \phi_k$, and that $(\gamma_k)$ is a monotonically increasing sequence, to get
		\begin{equation}\label{eq:U_KCont2}
			\normm{(u-c-\gamma_k)_+\nabla \phi_k}_2^2\lesssim R^{-2}2^{2k}\normm{(u-c-\gamma_{k-1})_+\phi_{k-1}}_2^2.
		\end{equation}
		On $\{u-c>\gamma_k\}$, we have
		\begin{equation}\label{eq:ContWeakType1}
			u-c-\gamma_{k-1}>\gamma_k-\gamma_{k-1}=\gamma_\infty \left(2^{-(k-1)}-2^{-k}\right)=\gamma_\infty 2^{-k}.
		\end{equation}
		Furthermore, recall from \eqref{eq:DefPhikCont} that $\phi_{k-1}\equiv 1$ on $\support\, \phi_{k}$. Together with inequality \eqref{eq:ContWeakType1} this yields
		\begin{equation}\label{eq:ContWeakType2}
			\begin{aligned}
			\int_{B(x_0,2R)} [(u-c-\gamma_{k-1})_+]^2\phi_{k-1}^2\dx&\geq \int_{\support\, \phi_k}\chi_{\{u-c>\gamma_{k}\}}[(u-c-\gamma_{k-1})_+]^2\dx\\
			& \geq \gamma_\infty^2 \,2^{-2k} \abs{\support\,\phi_k\cap\{u-c>\gamma_k\}}.
			\end{aligned}
		\end{equation}
		We can use inequality \eqref{eq:ContWeakType2} to obtain the following weak-type estimate:
		\begin{equation}\label{eq:U_kCont3}
			 \normm{\chi_{\{u-c>\gamma_k\}\cap\supp\phi_k}}_{n}^2=\left(\frac{\abs{\support\,\phi_k\cap\{u-c>\gamma_k\}}}{\abs{B(x_0,2R)}}\right)^{\frac{2}{n}}\lesssim 2^{2k} \left( \frac{\normm{(u-c-\gamma_{k-1})_+\phi_{k-1}}_2^2}{\gamma_\infty^2}\right)^{\frac 2n}.
		\end{equation}
		Inserting the inequalities \eqref{eq:U_KCont2} and \eqref{eq:U_kCont3} in inequality \eqref{eq:U_kCont1} yields, with a positive constant $C$,
		independent of $\gamma_\infty$ and $k$,
		\begin{equation*}
			U_{k}\leq C 2^{3k} U_{k-1}\left(\frac{U_{k-1}}{\gamma_\infty^2}\right)^{\frac 2n}, \qquad k=1,2,\dots .
		\end{equation*}
		This then allows us to apply Corollary \ref{cor} with $b=2^3$ and $\alpha = 2/n$
		and $\gamma = \gamma_\infty^2:= C^{n/2} 2^{3n^2/4} U_0$,
		to deduce that $U_k\rightarrow 0$ as $k\rightarrow\infty$, and therefore $(u-c-\gamma_\infty)_+ = 0$ a.e. on $B(x_0,R)$, regardless of the sign of $u-c$.
		Hence, $\abs{(u-c)_+}^2 \leq \gamma_\infty^2$ on $B(x_0,R)$. On the ball $B(x_0,R)\subseteq\bigcap_{k\in\mathbb{N}}\supp \eta_k$ this means that
		\begin{equation*}
			\abs{(u-c)_+}^2\leq \gamma_\infty^2\sim U_0 \leq \dashint_{B(x_0,2R)}\abs{(u-c)_+}^2\dx,
		\end{equation*}
		because $\gamma_0=0$ and $0 \leq \phi_0 \leq 1$, which then implies
		\eqref{eq:ucbound}.
	\end{proof}
	
	From this result one can deduce the local $C^\alpha$-continuity of weak solutions.
	
	\begin{theorem}\label{thm:ContCalpha}
		Let $\Omega\subset \setR^n$ be a domain and $A\in L^\infty(\Omega;\setR^{n\times n})$ a uniformly elliptic matrix-valued function. Let $u\in W^{1,2}(\Omega)$ be a weak solution to $\divergence(A\nabla u)=0$.
		Then, there is an $\alpha>0$, such that
		\begin{equation*}
			\osc_{B(x,r)}u\leq C r^{\alpha}
		\end{equation*}
		for all $r \in (0, R]$ such that $B(x,4R)\subset\Omega$.
	\end{theorem}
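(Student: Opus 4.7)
The plan is to establish a geometric decay of oscillation on dyadic balls and then iterate via Lemma \ref{lem:C-alpha-iteration}. By linearity, $-u$ is also a weak solution of $\divergence(A\nabla \cdot)=0$, so Theorem \ref{thm:ContBounded} applied to $\pm u$ with $c=0$ yields a local $L^\infty$ bound. In particular, the quantity $\omega(r):=\osc_{B(x,r)} u$ is finite, nonnegative and nondecreasing in $r$, and $\omega(2R)<\infty$ whenever $B(x,4R)\subset\Omega$.

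The crucial step is to prove the existence of constants $\sigma\in(0,1)$ and $\eta\in(0,1)$, depending only on $n$ and the ellipticity constant of $A$, such that
\[
  \omega(\sigma \rho) \leq (1-\eta)\,\omega(\rho) \qquad \text{whenever } B(x,2\rho)\subset\Omega.
\]
The reduction runs as follows. Set $M:=\sup_{B(x,\rho)} u$, $m:=\inf_{B(x,\rho)} u$, $\mu:=(M+m)/2$. One of the sets $\{u\leq\mu\}\cap B(x,\rho)$, $\{u\geq\mu\}\cap B(x,\rho)$ has measure at least $\tfrac12|B(x,\rho)|$, and by replacing $u$ with $-u$ if necessary we may assume the first. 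The affine rescaling $v:=2(u-m)/\omega(\rho)-1$ also satisfies $\divergence(A\nabla v)=0$, obeys $|v|\leq 1$ on $B(x,\rho)$, and inherits the density property $|\{v\leq 0\}\cap B(x,\rho)|\geq \tfrac12|B(x,\rho)|$. A pointwise estimate of the form $\sup_{B(x,\sigma\rho)} v \leq 1-2\eta$ then translates back to $\omega(\sigma\rho)\leq (1-\eta)\omega(\rho)$.

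The main obstacle is therefore the following \emph{density-implies-decay} statement for subsolutions: if $v$ is a subsolution with $v\leq 1$ on $B(x,\rho)$ and $|\{v\leq 0\}\cap B(x,\rho)|\geq \tfrac12|B(x,\rho)|$, then $\sup_{B(x,\sigma\rho)} v\leq 1-2\eta$. Its classical proof combines two ingredients. First, an isoperimetric/Poincar\'e-type estimate inserted into the Caccioppoli inequality \eqref{eq:ContCac}, applied at the dyadic levels $\gamma_k := 1-2^{-k}$, shows that for every prescribed $\varepsilon>0$ the measure $|\{v>\gamma_{k_\varepsilon}\}\cap B(x,\rho)|$ falls below $\varepsilon$ for some finite $k_\varepsilon$. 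Second, a quantitative refinement of the proof of Theorem \ref{thm:ContBounded}, in which the weak-type factor \eqref{eq:U_kCont3} is tracked explicitly in terms of the measure of the support of the positive part, allows the iteration to be restarted from this small-measure level set and yield the pointwise reduction $v\leq 1-\eta$ on $B(x,\sigma\rho)$. Only the bookkeeping of level-set measures in the iteration of Theorem \ref{thm:ContBounded} needs to be sharpened; no new machinery beyond Caccioppoli and Corollary \ref{cor} is required.

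With the oscillation decay in hand, apply Lemma \ref{lem:C-alpha-iteration} to $\phi(r):=\omega(r)$ on $[0,R]$, with $\kappa=0$, $C=0$, $\alpha_1:=\log(1-\eta)/\log\sigma > 0$ (so that $\phi(\sigma\rho)\leq \sigma^{\alpha_1}\phi(\rho)$ holds by the decay estimate), and any $\alpha_2\in(0,\alpha_1)$. This yields
\[
  \omega(r) \leq c\,\Bigl(\frac{r}{R}\Bigr)^{\alpha_2}\omega(R) \qquad \text{for all } 0<r\leq R,
\]
and, combined with the $L^\infty$ bound on $\omega(R)$ coming from the first step, gives the claimed H\"older estimate with exponent $\alpha:=\alpha_2$.
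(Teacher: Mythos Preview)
Your proposal is correct and follows essentially the same route as the paper: normalize, prove a density-implies-decay estimate (isolated in the paper as Lemma~\ref{lem:ContLemma}), deduce geometric oscillation decay, and iterate via Lemma~\ref{lem:C-alpha-iteration}. One simplification worth noting: no refinement of Theorem~\ref{thm:ContBounded} is needed---applied to $u$ with $c=\gamma_k$ it already gives $\sup u_k \lesssim (|A_k|/|B|)^{1/2}$ since $0\leq u_k\leq 1$, and the level-set shrinking then comes from Poincar\'e plus Caccioppoli together with the telescoping Lemma~\ref{lem:iteration2}, not Corollary~\ref{cor}.
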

	
	To prove Theorem \ref{thm:ContCalpha} we require the following intermediate result.
	\begin{lemma}\label{lem:ContLemma}
		Under the assumptions of Theorem \ref{thm:ContCalpha}, for each $\gamma \in (0,1)$ there exists a $\tau \in (0,1)$ that depends on $\gamma$ such that for every ball $B(x_0,R)$ such that $B(x_0,4R)\subset\Omega$, with $u\leq 1$ on $B(x_0,4R)$ and $\abs{\{u\leq 0\}\cap B(x_0,2R)}\geq \gamma \abs{B(x_0,2R)}$, we have that
		\begin{equation}\label{eq:ContSupSmall}
			\sup_{B(x_0,R)}u\leq 1 -\tau.
		\end{equation}
	\end{lemma}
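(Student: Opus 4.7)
The strategy is the classical ``second De Giorgi lemma''. Let $B := B(x_0, 2R)$ and define the nested level sets
\begin{equation*}
    A_k := \{u > 1 - 2^{-k}\} \cap B, \qquad k \in \setN_0.
\end{equation*}
The hypothesis $\abs{\{u \le 0\} \cap B} \ge \gamma \abs{B}$ gives $\abs{B \setminus A_k} \ge \abs{B \setminus A_0} \ge \gamma \abs{B}$ for every $k$. I will show $\abs{A_k}/\abs{B} \to 0$ quantitatively; then a single application of Theorem \ref{thm:ContBounded} to $(u - (1-2^{-k_0}))_+$ on $B(x_0, R)$ will upgrade this measure-smallness to a genuine pointwise bound $\sup_{B(x_0, R)} u \le 1 - 2^{-k_0-1}$, so that $\tau := 2^{-k_0 - 1}$ is the required constant.

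The key step is to derive a recursive estimate linking $\abs{A_{k+1}}$ to $\abs{A_k} - \abs{A_{k+1}}$. First, a cut-off $\eta \in C^\infty_0(B(x_0,4R))$ with $\eta\equiv 1$ on $B$ and $\abs{\nabla \eta}\lesssim R^{-1}$, combined with Theorem \ref{thm:ContBounded0} and the pointwise bound $(u-(1-2^{-k}))_+ \le 2^{-k}$ coming from $u \le 1$ on $B(x_0, 4R)$, yields
\begin{equation*}
    \int_B \bigabs{\nabla (u - (1 - 2^{-k}))_+}^2 \dx \lesssim R^{-2}\, 2^{-2k}\, \abs{B}.
\end{equation*}
Second, the De Giorgi (relative isoperimetric) inequality applied on $B$ to $u$ between the levels $a = 1-2^{-k}$ and $b=1-2^{-k-1}$ reads
\begin{equation*}
    (b-a)\, \abs{A_{k+1}} \le \frac{C\, \abs{B}^{1+\frac{1}{n}}}{\abs{B \setminus A_k}} \int_{A_k \setminus A_{k+1}} \abs{\nabla u}\dx.
\end{equation*}
Inserting $\abs{B\setminus A_k}\ge \gamma \abs{B}$, applying the Cauchy--Schwarz inequality on $A_k \setminus A_{k+1}$, using the Caccioppoli bound above (noting $\abs{B}^{1/n} \sim R$), and squaring, I obtain the recursion
\begin{equation*}
    \abs{A_{k+1}}^2 \lesssim \gamma^{-2}\, \abs{B}\, \bigl(\abs{A_k} - \abs{A_{k+1}}\bigr), \qquad k \ge 1.
\end{equation*}

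This is precisely the hypothesis of Lemma \ref{lem:iteration2} with constant sequence $c_k$, giving $\abs{A_k} \lesssim \gamma^{-1}\abs{B}/\sqrt{k}$. Choosing $k_0 = k_0(\gamma)$ large enough that $\abs{A_{k_0}}/\abs{B} \le \varepsilon$ for a suitable absolute $\varepsilon$, Theorem \ref{thm:ContBounded} applied with $c = 1 - 2^{-k_0}$ (valid since $B(x_0, 2R)\subset\Omega$), together with $(u - (1-2^{-k_0}))_+^2 \le 2^{-2k_0}\chi_{A_{k_0}}$, yields
\begin{equation*}
    \sup_{B(x_0, R)} \bigabs{(u - (1 - 2^{-k_0}))_+}^2 \lesssim 2^{-2k_0}\, \frac{\abs{A_{k_0}}}{\abs{B}} \le 2^{-2k_0 -2},
\end{equation*}
provided $\varepsilon$ is small enough. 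This gives \eqref{eq:ContSupSmall} with $\tau = 2^{-k_0 - 1}$, depending only on $\gamma$.

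The main obstacle is the second of the two ingredients above: the relative isoperimetric (``shrinking of level sets'') inequality is the sole place where the hypothesis on $\{u \le 0\}$ is used, and its proof in the continuous setting proceeds via a chain-of-balls Poincar\'e argument that does not immediately transfer to graded, possibly non quasi-uniform meshes. This is therefore the step that will need the most care when it is reworked in the subsequent discrete sections.
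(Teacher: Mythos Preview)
Your proof is correct and follows the same overall De Giorgi strategy as the paper: both reduce to the recursion $\abs{A_{k+1}}^2 \lesssim c\,(\abs{A_k}-\abs{A_{k+1}})$, invoke Lemma~\ref{lem:iteration2}, and finish with Theorem~\ref{thm:ContBounded} at the critical level to obtain $\tau=2^{-\bar k-1}$.

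The one substantive difference is how the recursion is derived. You appeal to the De Giorgi relative isoperimetric inequality between the levels $1-2^{-k}$ and $1-2^{-k-1}$. The paper instead works with the rescaled functions $u_k=2^k(u-\lambda_k)_+$, observes that $A_{k+1}\cap\{0<u_k<\tfrac12\}=\emptyset$, and applies the ordinary Poincar\'e inequality to $\min\{u_k,\tfrac12\}$ on $B(x_0,2R)$ (valid since $\abs{\{u_k=0\}\cap B(x_0,2R)}\ge\gamma\abs{B(x_0,2R)}$), followed by Cauchy--Schwarz and Caccioppoli. Both routes give the same recursion, but the paper's choice is deliberate: it is precisely this Poincar\'e-on-a-truncated-function argument that is later reproduced in the discrete setting (via Theorem~\ref{thm:Poincare} and the nodal truncation), whereas a discrete relative isoperimetric inequality is not developed. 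Your closing remark correctly anticipates this issue, but if you want your continuous proof to serve as a template for the discrete one, it is worth recasting the level-set step in the Poincar\'e form the paper uses.
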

	\begin{proof}
		We define
		\begin{align*}
			\lambda_k:=1-2^{-k},\qquad
			u_k:=\frac {1}{1-\lambda_k}(u-\lambda_k)_+,\qquad
			A_k:=B(x_0,2R)\cap\{u_k>0\}.
		\end{align*}
	As, by hypothesis, $u \leq 1$ on $B(x_0,4R)$, it follows that $0 < u_k \leq 1$ on $A_k$.
	By applying Theorem \ref{thm:ContBounded} to $u_k$ we deduce that
	\begin{equation}\label{eq:ContBdd}
		\sup_{B(x_0,R)}u_k\lesssim \bigg(\dashint_{B(x_0,2R)}\abs{u_k}^2\dx\bigg)^{\frac 12} = \bigg(\frac{1}{|B(x_0,2R)|}\int_{A_k}\abs{u_k}^2\dx\bigg)^{\frac 12} \lesssim \left(\frac{\abs{A_k}}{R^n}\right)^{\frac 12}.
	\end{equation}
	If we can find a $\bar{k}$ such that $\abs{A_{\bar{k}}}\leq \beta R^n$ for a sufficiently small $\beta$, we will have that $u_{\bar{k}}\leq \frac 12$ on  $B(x_0,R)$.
	
	Assume that $u_{k+1}>0$. We then have that
	\begin{align*}
		 0&<\frac{u-\left(1-\frac{1}{2^{k+1}}\right)}{\frac{1}{2^{k+1}}}=2^{k+1}\left(u-1+\frac{1}{2^k}-\frac{1}{2^k}+\frac{1}{2^{k+1}}\right)\\
		&=2^{k+1}\left(u-\lambda_k-\frac{1}{2^{k+1}}\right)\leq 2 \left(u_k-\frac 12\right).
	\end{align*}
	This means that
	\begin{equation}\label{eq:A_kCont}
		A_{k+1}\cap\left\{0<u_k<\frac 12\right\}=\emptyset.
	\end{equation}
	We have $\left\{0<u_k<\frac 12\right\}\subset A_k$ and $A_{k+1}\subset A_k$.  This yields
	\begin{equation}\label{eq:ContAkMeasure}
		\abs{A_k}\geq\left|\left(\left\{0<u_k<\frac 12\right\}\cap B(x_0,2R)\right)\cup A_{k+1}\right|=\left|\left(\left\{0<u_k<\frac 12\right\}\cap B(x_0,2R)\right)\right|+\left|A_{k+1}\right|.
	\end{equation}
	Note that if $x \in A_{k+1}$ then $u_k(x) \geq \frac{1}{2}$, and if $x \in B(x_0,2R)\setminus A_{k+1}$ then $0 < u_k(x) < \frac{1}{2}$.
	Note also that we have $\abs{B(x_0,2R)\cap\{u_k=0\}}\geq\gamma \abs{B(x_0,2R)}$. Therefore, we can use Poincar\'e's inequality to get
	\begin{equation}\label{eq:Akestimate1}
	\begin{aligned}
		\abs{A_{k+1}} & = 2 \int_{A_{k+1}} \frac{1}{2} \dx \leq 2 \int_{A_{k+1}} \frac{1}{2} \dx + 2 \int_{B(x_0,2R)\setminus A_{k+1}} u_k \dx\\		
		& =  2 \int_{B(x_0,2R)}\min\left\{u_k,\frac 12\right\}\dx\\
		&\lesssim R \int_{B(x_0,2R)} \left|\nabla\left(\min\left\{u_k,\frac 12\right\}\right)\right|\dx\\
		& = R \int_{\left\{0<u_k<\frac 12\right\} \cap B(x_0,2R)} \left|\nabla u_k\right|\dx\\
		&\leq R\left(\int_{B(x_0,2R)}\abs{\nabla u_k}^2\dx\right)^{\frac 12}\left|\left\{0<u_k<\frac 12\right\}\cap B(x_0,2R)\right|^{\frac 12}.
	\end{aligned}
	\end{equation}
By inequality \eqref{eq:ContCac}, with $\eta \in C^\infty_0(B(x_0,4R))$ such that $\eta \equiv 1$ on $B(x_0,2R)$ and $|\nabla
	\eta | \lesssim R^{-1}$, we find that
	\begin{equation}\label{eq:ContCacAppl}
		R^2\int_{B(x_0,2R)}\abs{\nabla u_k}^2\dx\lesssim \int_{B(x_0,4R)}\abs{u_k}^2\dx\lesssim R^n.
	\end{equation}
	Now, by inserting inequalities \eqref{eq:ContAkMeasure} and \eqref{eq:ContCacAppl} into inequality \eqref{eq:Akestimate1} yields
	\begin{equation*}
		\abs{A_{k+1}}\leq R^{\frac{n}{2}} \left(\abs{A_k}-\abs{A_{k+1}}\right)^{\frac 12}.
	\end{equation*}
	Consequently, we can use the iteration from Lemma \ref{lem:iteration2} to deduce the existence of a $\bar{k}$ such that $u_{\bar{k}}\leq \frac 12$ on $B(x_0,R)$.
	This gives
	\begin{equation*}
	 2^{\bar{k}}\left(u-1+\frac{1}{2^{\bar{k}}}\right) \leq \frac 12
	\end{equation*}
	and therefore
	\begin{equation*}
		u\leq 1-\frac{1}{2^{\bar{k}+1}},
	\end{equation*}
	which proves the lemma for $\tau=\frac{1}{2^{\bar{k}+1}}$.
	\end{proof}
	
	\begin{proof}[Proof of Theorem \ref{thm:ContCalpha}]
		First, note that $\osc (cu+d)=\abs{c}\osc{u}$ for constants $c,d\in\setR$. We define $\tilde{u}:=u-\frac 12 \left(\inf_{B(x_0,2R)}u+\sup_{B(x_0,2R)}u\right)$ and $\tilde{\tilde{u}}=\frac{1}{\norm{\tilde{u}}_{L^\infty(B(x_0,2R))}}\tilde{u}$. This gives $\osc_{B(x_0,2R)}\tilde{\tilde{u}}=2$. As $u$ is a solution to $-\divergence(A\nabla u)=0$, $-u$ is a solution as well. Note that we have either $\abs{\{\tilde{\tilde{u}}\leq 0\}\cap  B(x_0,2R)}\geq \frac 12 \abs{B(x_0,2R)}$ or $\abs{\{-\tilde{\tilde{u}}\geq 0\}\cap  B(x_0,2R)}\geq \frac 12 \abs{B(x_0,2R)}$. Thus we can assume that $\abs{\{\tilde{\tilde{u}}\leq 0\}\cap  B(x_0,2R)}\geq \frac 12 \abs{B(x_0,2R)}$ without loss of generality. Of course, this means that  $\abs{\{\tilde{u}_+=0\}\cap  B(x_0,2R)}\geq \frac 12 \abs{B(x_0,2R)}$; also, clearly, $-1 \leq
		\tilde{\tilde{u}} \leq 1$, and we can therefore apply Lemma \ref{lem:ContLemma} with $\gamma = \frac 12$ to deduce the existence of a
		$\tau \in (0,1)$ such that
		\begin{align}\label{eq:ContOsc}
		\begin{aligned}
				\osc_{B(x_0,R)}\tilde{\tilde{u}}&\leq 1 + \sup_{B(x_0,R)} \tilde{\tilde{u}} \leq 2-\tau = \osc_{B(x_0,2R)}\tilde{\tilde{u}}-\tau\\
				&= \osc_{B(x_0,2R)}\tilde{\tilde{u}}-\frac{\tau}{2} \osc_{B(x_0,2R)}\tilde{\tilde{u}} = \bigg(1-\frac{\tau}{2}\bigg)\osc_{B(x_0,2R)}\tilde{\tilde{u}} = 2^{-\alpha_1} \osc_{B(x_0,2R)}\tilde{\tilde{u}},
			\end{aligned}
			\end{align}
		with $\alpha_1:= - \log_2 ( 1- \frac{\tau}{2}) \in (0,1)$ (because $\tau \in (0,1)$). Hence, upon rescaling \eqref{eq:ContOsc},
		$$\osc_{B(x_0,R)}\tilde{u} \leq 2^{-\alpha_1} \osc_{B(x_0,2R)}\tilde{u}.$$
			Now, Lemma \ref{lem:C-alpha-iteration} with $\kappa=0$, $C=0$, $\sigma=\frac 12$ and $\phi(\rho):= \osc_{B(x_0,2\rho)}\tilde{u}$ gives
			\begin{equation*}
				\osc_{B(x_0,r)}\tilde{u} \leq \osc_{B(x_0,2r)}\tilde{u} = \phi(r) \lesssim \left(\frac{r}{R}\right)^\alpha \phi(R),
			\end{equation*}
			for some $\alpha \in (0,\alpha_1)$ and $0 \leq r \leq R$. This then implies, with $R$ held fixed, the assertion of the theorem by noting that $\osc_{B(x_0,r)} u = \osc_{B(x_0,r)}\tilde{u}$.
	\end{proof}

\section{Triangulations and mesh-conditions}
In this section we will introduce common mesh-conditions that will be used throughout and prove properties of the corresponding triangulations. We will always assume that $\Omega\subset\setR^n$ is a polyhedral Lipschitz domain and $\mathcal{T}_h$ is a triangulation of that domain. Henceforth, a polyhedral domain will be understood to be a polyhedral Lipschitz domain, and we shall therefore write \textit{polyhedral domain} instead of \textit{polyhedral Lipschitz domain} for the sake of brevity. Here, by a triangulation of $\Omega$ we mean a subdivision of $\overline{\Omega}$ into closed $n$-dimensional simplices with pairwise disjoint interiors, whose union is $\overline{\Omega}$, and such that for any pair of simplices $T, T' \in \mathcal{T}_h$ such that $T\cap T'$ is nonempty, $T\cap T'$ is either a shared vertex, or a shared $k$-dimensional face, which is a $k$-dimensional closed simplex, for some $k \in \{1,\dots,n-1\}$. The ($0$-dimensional) vertices of the simplices will be referred to as nodes. First we will define the Lagrange basis for a triangulation.
\begin{definition}\label{def:LagrangeBasis}
	Let $\mathcal{T}_h$ be a triangulation of the polyhedral domain $\Omega\subset\setR^n$. For a node $x_i$ of the triangulation $\mathcal{T}_h$ we denote by $\psi_i$ the associated \textit{linear Lagrange basis function}, which is, by definition, a continuous function on $\overline\Omega$ such that $\psi_i(x_j)=\delta_{i,j}$ for any node $x_j$, and for any $T \in \mathcal{T}_h$ the restriction of $\psi_i$ to $T$ is an affine function of $n$ variables.
\end{definition}
	
We will now introduce various mesh-conditions.
	
\begin{definition}\label{def:shapeRegular}
		We call a triangulation $\mathcal{T}_h$ \textit{shape-regular} with shape-regularity parameter $\Gamma>1$, if one has, for each $T \in \mathcal{T}_h$,
		\begin{equation}\label{eq:radiiShapeRegular}
			h_T \leq R_{i,T}\Gamma,
		\end{equation}
where $R_{i,T}$ is the radius of the largest $n$-dimensional ball contained in $T$ (which we shall refer to as the \textit{inscribed ball} of $T$) and $h_T:=\mathrm{diam}\,T$.	
	\end{definition}

	Next, we will introduce two important notions: $A$-\textit{nonobtuseness} and \textit{uniform} $A$-\textit{acuteness}.
	\begin{definition}
		Let $A\in L^{\infty}(\Omega;\setR^{n\times n})$ be a uniformly elliptic matrix-valued function. We call a triangulation $\mathcal{T}_h$ of $\Omega$ $A$-\textit{nonobtuse} if
		\begin{equation}\label{eq:aNonObtuse}
			\int_T A\nabla\psi_i\cdot\nabla\psi_j\dx\leq 0
		\end{equation}
		for any $T\in\mathcal{T}_h$ and for any $i\neq j$.
		
		We will call a triangulation $\mathcal{T}_h$ \textit{uniformly} $A$-\textit{acute} if there is a positive constant $\gamma$ such that
		\begin{equation}\label{eq:aAcute}
			\int_T A\nabla\psi_i\cdot\nabla\psi_j\dx\leq -\gamma\, \norm{\nabla\psi_i}_{L^2(T)}\norm{\nabla\psi_j}_{L^2(T)}
		\end{equation}
		for any $T\in\mathcal{T}_h$ and any $i\neq j$ with $T\subset \supp{\psi_i}\cap\supp{\psi_j}$.
		
		Note that if $A$ is proportional to the identity matrix, this definition coincides with the geometric idea of a nonobtuse triangulation. The existence and construction of uniformly $A$-acute and $A$-nonobtuse triangulations is discussed in detail in \cite{CRGM07}.
	\end{definition}

We will now recall a few properties of shape-regular families of triangulations $\mathcal{T}_h$. For any simplex $T\in\mathcal{T}_h$, there is an invertible linear transformation $B_T$ with bounded inverse that maps $T$ onto the standard simplex in $\setR^n$ with the nodes $(0,\ldots,0)$, $(h_T,0,\ldots,0)$, $\ldots,$ $(0,\ldots,0,h_T)$. The norms of $B_T$ and its inverse are bounded uniformly for all $T\in\mathcal{T}_h$. Furthermore, for any node $x_i$ of a simplex $T\in\mathcal{T}_h$ we have that
		\begin{equation}\label{eq:h_Tdef}
			\abs{\nabla\psi_i}\sim h_T^{-1}.
		\end{equation}
	Furthermore, if $T\in\mathcal{T}_h$ and $S\in\mathcal{T}_h$ have a nonempty intersection, we have
		\begin{equation}\label{eq:Intersection}
		h_T\sim h_S.
	\end{equation}


For simplicity, for any set $A$ contained in $\overline\Omega$,  we will write
\begin{equation*}
	\overline{\Omega(A)}:=\bigcup_{T\in\mathcal{T}_h\,:\,T\cap A\neq \emptyset}  T,
\end{equation*}
and the interior of that set is then
\begin{equation*}
	\Omega(A):=\overline{\Omega(A)}\setminus\partial\overline{\Omega(A)}.
\end{equation*}
In particular, we will write $P_i:=\Omega(\{x_i\})$ for any patch around a node $x_i$. We will also write
\begin{equation}\label{eq:OmegaPrime}
	\Omega'(A):=\bigcup_{i\,:\,x_i\in A}P_i.
\end{equation}
In short, $\Omega(A)$ is the set that contains all simplices that touch $A$ whereas $\Omega'(A)$ is the set of all simplices that have a node in $A$.

With the help of equation \eqref{eq:Intersection} one finds that
	\begin{equation}\label{eq:regularPatch}
		\mathrm{dist}\left(T,\Omega\setminus \Omega\left( T\right)\right)\gtrsim h_T.
	\end{equation}
Let us denote the connected component of $B(x_0,R)\cap\Omega$ that contains $x_0$ by $\mathcal{B}(x_0,R)$. Note that if $B(x_0,R)\subset\Omega$, we have $B(x_0,R)=\mathcal{B}(x_0,R)$. Then, there is a constant $Q>1$ that only depends on the shape-regularity parameter and the geometry of the domain $\Omega$, such that if $x_i\in T$ for some $T\in\mathcal{T}_h$ and $R\geq h_T$, we have
\begin{equation}\label{eq:regularBall}
		\Omega(\mathcal{B}(x_0,R))\subset B(x_0,QR).
	\end{equation}
	
	Additionally, for $x_0\in T$ for some $T\in\mathcal{T}_h$ and $R\geq h_T$, we have that
	\begin{equation}\label{eq:OmegaPrime1}
		\mathrm{dist}\left(x_0,\Omega\setminus \Omega'(\mathcal{B}(x_0,R))\right)\geq \kappa R.
	\end{equation}
	In particular, this means that there is a $\kappa>0$, that only depends on the shape-regularity parameter $\Gamma$ and the geometry of the domain $\Omega$, such that
	\begin{equation}\label{eq:OmegaPrime2}
		\mathcal{B}(x_0,\kappa R) \subset\Omega'(\mathcal{B}(x_0,R)).
	\end{equation}
	
		For the sake of simplicity we shall not indicate the dependence of the constants $Q$ and $\kappa$ on the geometrical properties of $\Omega$.

Next, we define the finite element space we shall be working with.
\begin{definition}
	Given a triangulation $\mathcal{T}_h$ of the domain $\Omega$, we denote the space of continuous functions that are affine on every $T\in\mathcal{T}_h$ by $V_h$. Note that $V_h\subset W^{1,2}(\Omega)$ and we can write
	\begin{equation}\label{eq:VhDecomposition}
		\begin{aligned}
			u_h&=\sum_i u(x_i)\psi_i
		\end{aligned}
	\end{equation}
	for any $u_h\in V_h$ with the Lagrange basis functions $\psi_i$ introduced in Definition \ref{def:LagrangeBasis}. This also shows that we can write the interpolatory projection $\Pi_h$ onto $V_h$ for a continuous function $f$ as
	\begin{equation}\label{eq:Pih}
		\Pi_h f:=\sum_i f(x_i)\psi_i.
	\end{equation}
The function $\Pih f$ is also called the (continuous piecewise affine) Lagrange interpolant of $f$. We will denote the space of continuous functions on $\overline\Omega$ that are affine on each $T\in\mathcal{T}_h$ and vanish on $\partial\Omega$ by $V_{h,0}$.
\end{definition}
We will formulate a few lemmas concerning continuous piecewise affine functions defined on triangulations. They will lead to a stronger version of Poincar\'e's inequality. We will always assume that $V_h$ and $V_{h,0}$ are finite element spaces associated with a shape-regular triangulation $\mathcal{T}_h$ of a polyhedral domain $\Omega$. In particular, this means that \eqref{eq:h_Tdef} is true. $\Pih$ will always denote the interpolatory projection operator onto $V_h$ as defined in equality \eqref{eq:Pih}.

\begin{lemma}
	We have
	\begin{alignat}{2}
   \max_T \abs{\Pih f} &\leq \max_{T} \abs{f} &\qquad \forall\, f \in C^0(\overline{\Omega}),\label{eq:PiL-stable_1} \\
   \max_T \abs{f-\Pih f} + \max_T h_T \abs{\nabla \Pih f} &\lesssim
   \max_T h_T \abs{\nabla f} &\qquad \forall\, f \in C^1(\overline{\Omega}). \label{eq:PiL-stable_2}
        \end{alignat}
\end{lemma}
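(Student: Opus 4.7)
The plan is to exploit the fact that $\Pih f$ is affine on each simplex $T$, together with the shape-regularity bound \eqref{eq:h_Tdef} on the gradients of the Lagrange basis functions. Throughout, for a given simplex $T\in\mathcal{T}_h$, I read $\max_T$ as $\max_{x\in T}$, so that both inequalities are to be established on each $T$ separately.

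\textbf{Step 1 (first inequality).} On a simplex $T$ with nodes $x_{i_0},\dots,x_{i_n}$, the function $\Pih f$ is affine, hence attains its maximum and minimum on $T$ at the vertices. Since $\Pih f(x_{i_k})=f(x_{i_k})$, this yields
\[
\max_T\abs{\Pih f}=\max_{0\le k\le n}\abs{f(x_{i_k})}\le\max_T\abs{f},
\]
which is \eqref{eq:PiL-stable_1}.

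\textbf{Step 2 (gradient estimate).} Since $\sum_i \psi_i\equiv1$ on $T$, we have $\sum_i\nabla\psi_i=0$ on $T$, and hence for any fixed vertex $x_{i_0}$ of $T$,
\[
\nabla\Pih f=\sum_{k=0}^n f(x_{i_k})\nabla\psi_{i_k}=\sum_{k=0}^n\bigl(f(x_{i_k})-f(x_{i_0})\bigr)\nabla\psi_{i_k}\quad\text{on } T.
\]
By the convexity of $T$ the segment $[x_{i_0},x_{i_k}]$ lies in $T$, so the mean value theorem gives $\abs{f(x_{i_k})-f(x_{i_0})}\le h_T\max_T\abs{\nabla f}$. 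Combined with \eqref{eq:h_Tdef}, which yields $\abs{\nabla\psi_{i_k}}\lesssim h_T^{-1}$, this produces
\[
\abs{\nabla\Pih f}\lesssim\sum_{k=0}^n h_T\max_T\abs{\nabla f}\cdot h_T^{-1}\lesssim\max_T\abs{\nabla f},
\]
so that $h_T\abs{\nabla\Pih f}\lesssim h_T\max_T\abs{\nabla f}$.

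\textbf{Step 3 (interpolation error).} Fix $x\in T$ and any vertex $x_{i_0}$ of $T$. Since $f(x_{i_0})=\Pih f(x_{i_0})$,
\[
f(x)-\Pih f(x)=\bigl(f(x)-f(x_{i_0})\bigr)-\bigl(\Pih f(x)-\Pih f(x_{i_0})\bigr).
\]
The first difference is bounded by $h_T\max_T\abs{\nabla f}$ by the mean value theorem (as $T$ is convex and $\abs{x-x_{i_0}}\le h_T$), and the second by $h_T\abs{\nabla\Pih f}$ since $\Pih f$ is affine on $T$. Invoking the bound on $\abs{\nabla\Pih f}$ from Step~2 gives $\max_T\abs{f-\Pih f}\lesssim h_T\max_T\abs{\nabla f}$, completing \eqref{eq:PiL-stable_2}.

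The only nontrivial ingredient is the control of $\abs{\nabla\Pih f}$ in Step~2, and this is already packaged into \eqref{eq:h_Tdef}, whose proof uses shape-regularity via the reference-simplex transformation $B_T$; the rest is convexity of $T$ and the mean value theorem.
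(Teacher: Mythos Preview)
Your proof is correct and follows essentially the same approach as the paper: Step~1 is identical, and your Step~2 is the paper's gradient argument (the paper phrases it as ``assume $f(x)=0$ for some $x\in T$'', which amounts to the same subtraction of a constant you achieve via $\sum_i\nabla\psi_i=0$). The only minor difference is that for the interpolation error the paper simply appeals to ``standard approximation theory'', whereas you supply an explicit mean-value-theorem argument; your version is a harmless elaboration.
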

\begin{proof}
	As an affine function on a simplex $T$, $\Pih f$ attains its maximum and minimum values on $T$ at a node of $T$. This means that we have
	\begin{equation*}
		\max_T \abs{\Pih f}= \max_{x_i\in T}\abs{\Pih f}=\max_{x_i\in T}\abs{ f}\leq  \max_{T} \abs{f},
	\end{equation*}
	which proves the inequality \eqref{eq:PiL-stable_1}. The bound
	$\max_T \abs{f- \Pih f}\lesssim \max_T h_T\abs{\nabla f}$ is standard from approximation theory.
	To prove that $\max_T h_T \abs{\nabla \Pih f} \lesssim \max_T h_T \abs{\nabla f}$ we can assume that $f(x)=0$ for some $x\in T$. We also use the relation \eqref{eq:h_Tdef} to get
	\begin{equation*}
		\max_T \abs{\nabla \Pih f}= \max_T \left|\sum_i f(x_i)\nabla\psi_i\right|\lesssim \max_{ T} \abs{f(x_i)} h_T^{-1}\lesssim \max_{ T}\abs{\nabla f}.
	\end{equation*}
	This concludes the proof of inequality \eqref{eq:PiL-stable_2}.
\end{proof}

We will denote the space of polynomials of degree $k$ or less by $\mathcal{P}_k$.
\begin{remark}
  \label{rem:local-zero-product}
  Suppose that $a_1, \dots, a_m\in\mathcal{P}_k$. Then, $a_1 \dotsm a_m = 0$
  if and only if at least one of the functions~$a_1, \dots, a_m$ is identically
  zero.
\end{remark}

The following useful Lemma \ref{lem:prod-rearrange} holds for polynomials defined on simplices $T\in\mathcal{T}_h$.
\begin{lemma}
  \label{lem:prod-rearrange}
  Let $a_1, \dots, a_m\in\mathcal{P}_k$. Then, for each element $T$ of a shape-regular family
$\mathcal{T}_h$,
  \begin{equation}\label{eq:prod-rearrange}
    \prod_{j=1}^m \max_T \abs{a_j} \sim \dashint_T \Bigabs{\prod_{j=1}^m a_j}\,\dx.
  \end{equation}
  The constants hidden in the $\sim$ symbol depend on~$k$ and $m$ and the shape-regularity parameter $\Gamma$ of
$\mathcal{T}_h$.
\end{lemma}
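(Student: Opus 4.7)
The upper bound $\dashint_T \Bigabs{\prod_{j=1}^m a_j}\,\dx \leq \prod_{j=1}^m \max_T \abs{a_j}$ is immediate from the pointwise inequality $\abs{a_1(x)\cdots a_m(x)} \leq \prod_j \max_T \abs{a_j}$ at every $x\in T$. The real content of the lemma is the reverse direction, and my plan is to reduce it to a compactness argument on the finite-dimensional space $\mathcal{P}_k^m$ after first mapping everything to a fixed reference simplex.

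Fix $\widehat T$ to be the standard $n$-simplex and let $B_T : \widehat T \to T$ be an invertible affine bijection. For each $a_j \in \mathcal{P}_k$ set $\hat a_j := a_j \circ B_T \in \mathcal{P}_k$. The affine change of variables, together with the cancellation of the Jacobian $\abs{T}/\abs{\widehat T}$ that makes normalised integrals affine-invariant, gives
\[
\max_T \abs{a_j} = \max_{\widehat T} \abs{\hat a_j}, \qquad \dashint_T \Bigabs{\prod_{j=1}^m a_j}\,\dx = \dashint_{\widehat T} \Bigabs{\prod_{j=1}^m \hat a_j}\,d\hat x,
\]
so it suffices to prove the equivalence on $\widehat T$ with constants depending only on $k$ and $m$. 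On $\widehat T$ I would consider the two continuous maps $F, G : \mathcal{P}_k^m \to [0,\infty)$ defined by $F(a_1,\dots,a_m) := \dashint_{\widehat T}\abs{\prod_j a_j}\,d\hat x$ and $G(a_1,\dots,a_m) := \prod_j \max_{\widehat T}\abs{a_j}$; both are positive-homogeneous of degree one in each argument separately. By Remark \ref{rem:local-zero-product} (together with the fact that a nonzero polynomial cannot vanish on a set of positive measure), $F$ and $G$ vanish at the same points, namely those $(a_1,\dots,a_m)$ for which some $a_j$ is identically zero. Restricted to the compact subset $K := \{(a_1,\dots,a_m) \in \mathcal{P}_k^m : \max_{\widehat T}\abs{a_j}=1 \text{ for each } j\}$, the continuous function $F$ is strictly positive and therefore attains a positive minimum $c(k,m)$ and a finite maximum $C(k,m)$. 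Rescaling each $a_j$ by $\max_{\widehat T}\abs{a_j}$ (or noting that both sides vanish when some $a_j$ does) then yields $c\, G \leq F \leq C\, G$ on all of $\mathcal{P}_k^m$, which is the desired equivalence on $\widehat T$; transferring back to $T$ via the affine invariance noted above concludes the proof, with shape-regularity of $\mathcal{T}_h$ entering only through the comparison between $B_T$-pullback and the reference simplex.

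The main obstacle, such as it is, is conceptual rather than technical: one has to recognise that the lower bound is not an algebraic identity but genuinely requires the non-vanishing of products of nonzero polynomials on a set of positive measure, as encoded in Remark \ref{rem:local-zero-product}. Once this is paired with the compactness of the unit sphere in the finite-dimensional space $\mathcal{P}_k^m$, the rest is routine bookkeeping via the affine map $B_T$.
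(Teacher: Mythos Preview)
Your proof is correct and follows the same essential strategy as the paper --- homogeneity reduction to a normalised set of polynomials, compactness on a finite-dimensional space, and Remark~\ref{rem:local-zero-product} to rule out vanishing --- but you organise it more efficiently. The paper fixes a simplex $T$, runs the compactness argument over polynomials on that $T$, and then performs a \emph{second} compactness argument over the set of shape-regular simplices of diameter~$1$ to obtain uniformity in $T$; this is where the shape-regularity parameter $\Gamma$ enters their constants. By contrast, you first pull everything back to the reference simplex $\widehat T$ via an affine bijection and observe that both sides of \eqref{eq:prod-rearrange} are exactly invariant under this map, so a single compactness argument on $\widehat T$ suffices and the resulting constants depend only on $k$, $m$ (and the dimension $n$). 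In fact your argument shows that the dependence on $\Gamma$ stated in the lemma is spurious --- so your closing remark that shape-regularity enters ``through the comparison between $B_T$-pullback and the reference simplex'' is unnecessary: in your route it does not enter at all.
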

\begin{proof}
	Since the expressions on the two sides of \eqref{eq:prod-rearrange} are homogeneous in $a_1,\dots,a_m$, it suffices to show that there are constants $c$ and $C$ such that $0<c\leq \dashint_T \Bigabs{\prod_{j=1}^m a_j}\,\dx\leq C$ for any $a_1,\dots,a_m\in\mathcal{P}_k$ with $\max_T\abs{a_1}=\dots=\max_T\abs{a_m}=1$  and any simplex $T \in \mathcal{T}_h$. Obviously we can choose $C=1$. We will show the existence of $c$ using a compactness argument. Equation \eqref{eq:prod-rearrange} is invariant under linear scaling. Thus we can assume that $h_T=1$ in the sense of Definition \ref{def:shapeRegular}. First, we fix a simplex $T \in \mathcal{T}_h$ and note that the mapping $g:(a_1,\dots,a_m)\mapsto \dashint_T\abs{a_1\cdots a_m}\dx$ is continuous on $\mathcal{P}^{\otimes m}_k$, as this space is finite-dimensional and all norms are equivalent. Furthermore, the set $\{(a_1,\dots, a_m):\max_T\abs{a_1}=\dots=\max_T\abs{a_m}=1\}\subset\mathcal{P}^{\otimes m}_k$ is closed and bounded and therefore compact. Thus, it suffices to show that $g(a_1,\dots,a_m)>0$ for any $(a_1,\dots, a_m)$
	with $\max_T\abs{a_1}=\dots=\max_T\abs{a_m}=1$ to get
\begin{equation}\label{eq:infPolynomialIntegral}
	\inf_{\max_T\abs{a_1}=\dots=\max_T\abs{a_m}=1}\dashint_T \abs{a_1\cdots a_m}\dx>0.
\end{equation}
Assume the contrary, i.e., that $\dashint_T\abs{a_1\cdots a_m}\dx=0$. Then, $a_1\cdots a_m=0$ on $T$. However, because of Remark \ref{rem:local-zero-product}, this implies that at least one of the $a_1,\dots,a_m$ has to be identically zero, which contradicts $\max_T\abs{a_1}=\dots=\max_T\abs{a_m}=1$.
	
	If we now fix a node $x_0$ of $T$, we define the set $\mathcal{A}\subset \setR^n$ as the set of $n$-tuples $(x_1,\dots,x_n)$, which, together with $x_0$, form a simplex with diameter $h_T=1$ with a given shape-regularity parameter and see that $\mathcal{A}$ is bounded and closed and therefore compact. Furthermore, the mapping $f:\mathcal{A}\rightarrow \setR$ defined via
	$$(x_1,\dots,x_n)\mapsto \inf_{\max_T\abs{a_1}=\dots=\max_T\abs{a_m}=1}\dashint_T \abs{a_1\cdots a_m}\dx$$
	is continuous. This means that it suffices to show that $f(a_1,\dots,a_n)>0$ to prove the relation \eqref{eq:prod-rearrange}; but this is obviously true, thanks to inequality \eqref{eq:infPolynomialIntegral}.
\end{proof}

We shall also require two lemmas, which we now state. The first of them is elementary, while the second relies of Lemma \ref{lem:prod-rearrange}.

\begin{lemma}
  \label{lem:proj-product}
  For all~$f_h,g_h\in V_h$ and $a,b \in \setR$, we have
  \begin{align*}
    \Pih(f_h g_h) - f_h g_h &= \Pih\big((f_h-a)(g_h-b)\big) - \left( f_h-a\right) \left(g_h-b\right).
  \end{align*}
\end{lemma}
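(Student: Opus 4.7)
The plan is to reduce this to a straightforward algebraic identity using linearity of $\Pih$ together with the reproducing property that $\Pih$ fixes every element of $V_h$ (and in particular every constant).

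First I would expand the product on the right-hand side:
$$
(f_h - a)(g_h - b) = f_h g_h - a g_h - b f_h + ab.
$$
Applying $\Pih$ and using linearity yields
$$
\Pih\bigl((f_h-a)(g_h-b)\bigr) = \Pih(f_h g_h) - a\, \Pih(g_h) - b\, \Pih(f_h) + ab\, \Pih(1).
$$
Since $f_h,g_h \in V_h$ and constants lie in $V_h$ (the constant function $1$ equals $\sum_i \psi_i$, so $\Pih(1)=1$, and more directly $\Pih$ is the identity on $V_h$), the right-hand side simplifies to
$$
\Pih(f_h g_h) - a g_h - b f_h + ab.
$$

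Subtracting the unexpanded $(f_h-a)(g_h-b)$ from this, the four lower-order terms $-ag_h - bf_h + ab$ cancel against their counterparts in the expansion of $(f_h-a)(g_h-b)$, leaving exactly $\Pih(f_h g_h) - f_h g_h$, which is the claimed identity.

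The main obstacle is essentially nonexistent; the whole content of the lemma is the observation that the affine-in-$(f_h,g_h)$ remainder produced by shifting $f_h\mapsto f_h-a$ and $g_h\mapsto g_h-b$ lies in $V_h$, hence is annihilated when one forms the interpolation error $\Pih(\cdot) - (\cdot)$. The product $f_h g_h$ itself is generally quadratic on each simplex and therefore not in $V_h$, which is why the leading term $\Pih(f_h g_h) - f_h g_h$ survives the cancellation.
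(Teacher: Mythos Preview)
Your proof is correct and follows essentially the same route as the paper: expand the shifted product, use linearity of $\Pih$ together with the fact that $\Pih$ fixes elements of $V_h$ (in particular $f_h$, $g_h$, and constants), and observe that the affine cross-terms cancel. The paper writes this cancellation as $\Pih(ag_h)=ag_h$, $\Pih(bf_h)=bf_h$, $\Pih(ab)=ab$ directly, but the content is identical.
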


\begin{proof}
	$\Pih$ is linear and constant functions are contained in $V_h$. This means that we have $\Pih(ag_h)=ag_h$, $\Pih(bf_h)=bf_h$ and  $\Pih(ab)=ab$. This therefore yields that
	\begin{align*}
		&\Pih\big((f_h-a)(g_h-b)\big) - \left( f_h-a\right) \left(g_h-b\right)\\
		&=\Pih(f_h g_h)- \Pih(a g_h)-\Pih(b f_h)+ \Pih(ab)- f_h g_h + a g_h + b f_h - ab\\
		&= \Pih(f_h g_h) - f_h g_h.	
	\end{align*}	
	That concludes the proof.
\end{proof}

\begin{lemma}
  \label{lem:prod-commutator}
  For all $v_h, w_h \in V_h$ we have
  \begin{equation}\label{eq:prod-commutator1}
		\begin{aligned}
			\lefteqn{\max_T \abs{ v_h w_h - \Pih (v_h w_h)}+\max_T h_T\,\abs{\nabla (v_h w_h - \Pih (v_h w_h))}} \qquad  &  \\
			&\lesssim \dashint_T \abs{v_h -\mean{v_h}_T} \dy \, \dashint_T \abs{w_h- \mean{w_h}_T} \dy\\
		\end{aligned}
	\end{equation}
	and

	\begin{equation}\label{eq:prod-commutator2}
		\begin{aligned}
			\lefteqn{\max_T \abs{ v_h w_h - \Pih (v_h w_h)}+\max_T h_T\,\abs{\nabla (v_h w_h - \Pih (v_h w_h))}} \qquad  &  \\
			&\lesssim h_T\,\dashint_T \abs{\nabla v_h} \dy \, \dashint_T \abs{w_h- \mean{w_h}_T} \dy
		\end{aligned}
	\end{equation}
	for any $T\in\mathcal{T}_h$.
\end{lemma}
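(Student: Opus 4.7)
The plan is to apply Lemma~\ref{lem:proj-product} with the element-local choices $a = \mean{v_h}_T$ and $b = \mean{w_h}_T$. Setting $\tilde v := v_h - \mean{v_h}_T$ and $\tilde w := w_h - \mean{w_h}_T$, that lemma gives, on~$T$,
\[
    v_h w_h - \Pih(v_h w_h) = \tilde v\tilde w - \Pih(\tilde v\tilde w).
\]
This mean-subtraction is the whole point of the argument: it replaces $v_h,w_h$ by their $T$-oscillations, which are precisely the integrals appearing on the right-hand side of \eqref{eq:prod-commutator1}, and produces zero-mean affine factors $\tilde v,\tilde w\in\mathcal{P}_1(T)$ to which Lemma~\ref{lem:prod-rearrange} can be applied with real content.

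For the zeroth-order part I would apply \eqref{eq:PiL-stable_1} to the continuous function $\tilde v\tilde w$ and combine it with the trivial bound $\max_T|\tilde v\tilde w|\leq \max_T|\tilde v|\max_T|\tilde w|$ to obtain
\[
    \max_T \bigabs{\tilde v\tilde w - \Pih(\tilde v\tilde w)} \leq 2\,\max_T|\tilde v|\,\max_T|\tilde w|.
\]
For the gradient part, I would observe that $\tilde v\tilde w - \Pih(\tilde v\tilde w)\in\mathcal{P}_2$ on~$T$ and invoke the standard local inverse estimate on a shape-regular simplex, $h_T\max_T|\nabla p|\lesssim \max_T|p|$ for $p\in\mathcal{P}_2$, reducing the gradient term to the previous one. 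Alternatively, \eqref{eq:PiL-stable_2} applied to $g:=\tilde v\tilde w$, together with $\nabla g = \tilde w\,\nabla v_h + \tilde v\,\nabla w_h$ and the elementary identity $\max_T|\tilde v|\sim h_T|\nabla v_h|$ for an affine function~$v_h$, delivers the same bound. Combining,
\[
    \max_T\bigabs{v_h w_h - \Pih(v_h w_h)} + \max_T h_T\bigabs{\nabla\bigl(v_h w_h - \Pih(v_h w_h)\bigr)} \lesssim \max_T|\tilde v|\,\max_T|\tilde w|.
\]

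Finally, I would convert maxima into integrals. Because $\tilde v,\tilde w\in\mathcal{P}_1$ on $T$, Lemma~\ref{lem:prod-rearrange} (applied with $m=1$ to each factor) yields $\max_T|\tilde v|\sim \dashint_T|\tilde v|\dy$ and $\max_T|\tilde w|\sim \dashint_T|\tilde w|\dy$, which gives \eqref{eq:prod-commutator1}. For \eqref{eq:prod-commutator2} I would replace the $\tilde v$ integral using the $L^1$-Poincar\'e inequality on the shape-regular simplex~$T$; for an affine $v_h$ this is essentially tautological, since $\nabla v_h$ is constant on~$T$ and $|\tilde v(x)|\leq h_T|\nabla v_h|$ pointwise on~$T$, so $\dashint_T|\tilde v|\dy\lesssim h_T\dashint_T|\nabla v_h|\dy$. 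No serious obstacle is expected; the only genuine decision in the proof is the element-local mean subtraction via Lemma~\ref{lem:proj-product}, without which one would only reach a crude product of sup-norms and lose the oscillation structure needed by the consumers of this lemma downstream.
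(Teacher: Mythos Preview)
Your proposal is correct and follows essentially the same route as the paper: mean-subtract via Lemma~\ref{lem:proj-product}, bound the commutator by $\max_T|\tilde v|\max_T|\tilde w|$ using \eqref{eq:PiL-stable_1} together with an inverse estimate for the gradient term, then convert maxima to integral averages via Lemma~\ref{lem:prod-rearrange} and finish with Poincar\'e for \eqref{eq:prod-commutator2}. The only cosmetic difference is that the paper applies Lemma~\ref{lem:prod-rearrange} to the product in one stroke rather than factor by factor, which amounts to the same thing.
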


\begin{proof}
	Because $f_h$, $g_h$ are affine functions on every $T\in \mathcal{T}_h$, we deduce by an inverse estimate that
	\begin{equation}\label{eq:ProdInverse}
		\max_T h_T \abs{\nabla(v_h w_h - \Pih(v_h w_h))}\lesssim \max_T\abs{v_h {\color{blue}w_h}-\Pih(v_h w_h)}.
	\end{equation}
	Now, Lemma \ref{lem:proj-product} yields
	\begin{equation}
		\begin{aligned}
			\max_T \abs{ v_h w_h - \Pih (v_h w_h)}= &\max_T\huge|\left(v_h -\mean{v_h}_T\right)\left(w_h-\mean{w_h}_T\right)\\
				&-\Pih\left(\left(v_h -\mean{v_h}_T\right)\left(w_h-\mean{w_h}_T\right)\right)\huge|.
		\end{aligned}
	\end{equation}
	Combining this with the triangle inequality and inequality \eqref{eq:PiL-stable_1} gives
	\begin{equation}\label{eq:ProdCommutatorProof2}
		\max_T \abs{ v_h w_h - \Pih (v_h w_h)}\lesssim \max_T\left|\left(v_h -\mean{v_h}_T\right)\left(w_h-\mean{w_h}_T\right)\right|.
	\end{equation}
	We can now apply Lemma \ref{lem:prod-rearrange} to inequality \eqref{eq:ProdCommutatorProof2} to deduce that
	\begin{equation}\label{eq:ProdCommutatorProof3}
		\max_T \abs{ v_h w_h - \Pih (v_h w_h)}\lesssim \dashint_T |v_h -\mean{v_h}_T |\dx\, \dashint_T |w_h-\mean{w_h}_T|\dx,
	\end{equation}
	which proves the bound \eqref{eq:prod-commutator1}.
	
	The bound \eqref{eq:prod-commutator2} follows directly from equation \eqref{eq:prod-commutator1} by Poincar\'e's inequality on $T$.	
\end{proof}

\begin{remark}\label{rem:bound1}
	The right-hand sides of the inequalities \eqref{eq:prod-commutator1} and \eqref{eq:prod-commutator2} can be further bounded above by noting that
	\begin{equation*}
		\dashint_T|v_h-\mean{v_h}|\dx\leq \dashint_T |v_h|\dx+\dashint_T |\mean{v_h}|\dx\leq 2\dashint_T|v_h|\dx,
	\end{equation*}
and analogously in the case of $\dashint_T|w_h-\mean{w_h}|\dx$.
\end{remark}

Next, we will prove a Jensen-type inequality.
\begin{lemma}
  \label{lem:Pi-etah2}
  For $\eta_h \in V_h$ and $q \geq 1$ we have
  \begin{align*}
    \eta_h^q &\leq \Pih (\eta_h^q).
  \end{align*}
\end{lemma}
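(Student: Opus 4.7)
The plan is to verify the inequality pointwise on each simplex $T \in \mathcal{T}_h$, reducing it to the convexity of the power function via barycentric coordinates. Fix $T \in \mathcal{T}_h$ with vertices $x_1,\dots,x_{n+1}$. Every $x \in T$ admits a unique representation $x = \sum_{i=1}^{n+1} \lambda_i(x)\, x_i$ with $\lambda_i(x) \geq 0$ and $\sum_i \lambda_i(x) = 1$. Since $\eta_h\rvert_T$ is affine,
\[
\eta_h(x) \;=\; \sum_{i=1}^{n+1} \lambda_i(x)\,\eta_h(x_i).
\]
Moreover, by \eqref{eq:Pih} the restriction $\Pih(\eta_h^q)\rvert_T$ is the unique affine function on $T$ that agrees with $\eta_h^q$ at the vertices, so
\[
\Pih(\eta_h^q)(x) \;=\; \sum_{i=1}^{n+1} \lambda_i(x)\,\eta_h(x_i)^q.
\]

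The map $t \mapsto t^q$ is convex on $[0,\infty)$ for $q \geq 1$ (and the map $t\mapsto \abs{t}^q$ is convex on all of $\setR$), so Jensen's inequality applied to the convex combination above immediately yields
\[
\eta_h(x)^q \;=\; \Bigg(\sum_{i=1}^{n+1} \lambda_i(x)\,\eta_h(x_i)\Bigg)^{\!q} \;\leq\; \sum_{i=1}^{n+1} \lambda_i(x)\,\eta_h(x_i)^q \;=\; \Pih(\eta_h^q)(x).
\]
Since $T$ was arbitrary and the simplices of $\mathcal{T}_h$ cover $\overline\Omega$, the inequality holds pointwise on $\overline\Omega$.

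There is no substantive obstacle: the content of the lemma is simply that the Lagrange interpolant of a convex function of an affine function dominates the function itself, which is Jensen's inequality restated in barycentric coordinates. The only point requiring a word of care is the interpretation of $\eta_h^q$ when $\eta_h$ changes sign and $q$ is not an integer; if that case arises one works with $\abs{\eta_h}^q$ instead, and the same convexity argument applies without modification.
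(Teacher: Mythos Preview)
Your proof is correct and essentially identical to the paper's: the barycentric coordinates $\lambda_i$ on $T$ are precisely the restrictions of the Lagrange basis functions $\psi_i$, and both arguments apply Jensen's inequality to the convex combination $\eta_h(x)=\sum_i \lambda_i(x)\eta_h(x_i)$. Your additional remark about the sign of $\eta_h$ for non-integer $q$ is a fair caveat that the paper leaves implicit (in all applications $\eta_h\geq 0$).
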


\begin{proof}
	We know that
	\begin{equation*}
		\sum_i\psi_i=\Pih(1)=1.
	\end{equation*}
	This allows us to use Jensen's inequality to deduce that
	  \begin{align*}
    \eta_h^q(x) = \Big( \sum_j \psi_j(x) \eta_h(x_j) \Big)^q
    \leq \sum_j \psi_j(x) \eta_h^q(x_j) = \big(\Pih (\eta_h^q)\big)(x)\qquad \forall\, x \in \overline{\Omega}.
  \end{align*}
That completes the proof.
\end{proof}

Finally, we will prove two versions of Poincar\'e's inequality for functions in $V_h$.

\begin{lemma}[Poincar\'e's inequality on patches]\label{lem:PoincarePatches}
	Let $x_i$ be a node of the triangulation $\mathcal{T}_h$ and $P_i=\Omega(x_i)$ the associated patch, and let $v_h\in V_h$ be a function with $v_h(x_0)=0$ for some node $x_0\in \overline{P_i}$. Then, we have Poincar\'e's inequality on $P_i$, that is
	\begin{equation*}
		\int_{P_i}\abs{v_h}\dx\lesssim h_i \int_{P_i} \abs{\nabla v_h}\dx,
	\end{equation*}
	where $h_i=h_T$ for some $T\in\mathcal{T}_h$ with $T\subset \overline{P_i}$ in the sense of Definition \ref{def:shapeRegular}. Note, that any $S,T\in\mathcal{T}_h$ with $S,T\subset \overline{P_i}$ share the node $x_i$; hence,
	$h_T\sim h_S$ by equation \eqref{eq:Intersection} and the definition of $h_i$ is meaningful.
\end{lemma}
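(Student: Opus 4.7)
The plan is to reduce the claim to a pointwise estimate comparing the maximum of $\abs{v_h}$ and the essential supremum of $\abs{\nabla v_h}$ on the patch, and then integrate, exploiting that $v_h$ is continuous piecewise affine, $\nabla v_h$ is piecewise constant on $\mathcal{T}_h$, and every simplex in $\overline{P_i}$ shares the central node $x_i$.

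First I would exploit the combinatorial structure of the patch. By the definition of $P_i=\Omega(\{x_i\})$, every simplex $T\subset\overline{P_i}$ contains $x_i$ as a vertex. In particular, the chosen node $x_0$ and any other node $x_j$ of $\overline{P_i}$ each lie in some simplex of $\overline{P_i}$ that also contains $x_i$, so the two can be joined through $x_i$ by traversing at most two simplices of the patch. Since $v_h$ is affine on each $T\in\mathcal{T}_h$, for any two vertices $y,z$ of a common simplex $T$ we have $\abs{v_h(y)-v_h(z)}\leq h_T\,\esssup_T\abs{\nabla v_h}$. Applying this along $x_0\to x_i\to x_j$ and invoking $h_T\sim h_i$ for every $T\subset\overline{P_i}$ (a consequence of \eqref{eq:Intersection} since all such simplices share $x_i$), I obtain
\begin{equation*}
\abs{v_h(x_j)}=\abs{v_h(x_j)-v_h(x_0)}\lesssim h_i\,\esssup_{P_i}\abs{\nabla v_h}
\end{equation*}
for every node $x_j\in\overline{P_i}$. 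Since an affine function on a simplex attains its maximum at a vertex, this upgrades at once to $\max_{\overline{P_i}}\abs{v_h}\lesssim h_i\,\esssup_{P_i}\abs{\nabla v_h}$.

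Next I would integrate. Shape-regularity together with \eqref{eq:Intersection} ensures that $\abs{T}\sim h_i^n$ for every $T\subset\overline{P_i}$ and bounds the number of such simplices by a constant depending only on the shape-regularity parameter $\Gamma$. Hence, using that $\abs{\nabla v_h}$ is piecewise constant,
\begin{equation*}
\int_{P_i}\abs{\nabla v_h}\dx=\sum_{T\subset\overline{P_i}}\abs{T}\,\esssup_T\abs{\nabla v_h}\sim h_i^n\,\esssup_{P_i}\abs{\nabla v_h},
\end{equation*}
while
\begin{equation*}
\int_{P_i}\abs{v_h}\dx\leq \abs{P_i}\,\max_{\overline{P_i}}\abs{v_h}\lesssim h_i^{n+1}\,\esssup_{P_i}\abs{\nabla v_h}.
\end{equation*}
Combining these two displays yields the asserted inequality.

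I do not anticipate a genuine obstacle: the argument is purely combinatorial. The only care needed is to invoke shape-regularity to ensure that (i) the diameters of all simplices in $\overline{P_i}$ are comparable to $h_i$ and (ii) the number of such simplices is uniformly bounded, so that the constants hidden in every $\sim$ and $\lesssim$ above depend only on $\Gamma$. No reference-element scaling or compactness argument is required.
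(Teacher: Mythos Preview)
Your argument is correct and complete. The key lower bound $\int_{P_i}\abs{\nabla v_h}\dx\gtrsim h_i^{\,n}\esssup_{P_i}\abs{\nabla v_h}$ holds because the essential supremum is attained on some simplex $T^*\subset\overline{P_i}$ with $\abs{T^*}\sim h_i^{\,n}$, and the chain $x_0\to x_i\to x_j$ is legitimate since every simplex in $\overline{P_i}$ has $x_i$ as a vertex.

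Your route, however, differs from the paper's. The paper argues at the level of integrals rather than pointwise maxima: it picks the simplex $T_0\ni x_0$ and its inscribed ball $B_0$, applies a Poincar\'e--Wirtinger inequality on the patch to bound $\int_{P_i}\abs{v_h-\mean{v_h}_{B_0}}\dx$, and then controls $\abs{\mean{v_h}_{B_0}}$ via a Poincar\'e inequality on $T_0$ (using that $v_h$ vanishes at the vertex $x_0$). The triangle inequality then closes the argument. Your proof is more elementary and exploits the piecewise-affine structure directly (maxima at vertices, piecewise-constant gradients), avoiding any appeal to a continuous Poincar\'e inequality on the patch domain and making the dependence on shape-regularity completely explicit. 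The paper's proof, by contrast, is written so that it would extend with minimal change to higher-order finite element spaces or even to general $W^{1,1}$ functions on $P_i$, where pointwise node values and piecewise-constant gradients are not available.
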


\begin{proof}
	Let $T_0\subset P_i$ be a simplex that has $x_0$ as a node and let $B_0$ be its inscribed ball. By shape-regularity (see inequality \eqref{eq:radiiShapeRegular}), there is a fixed $\gamma>0$ such that $\abs{B_0}\geq\gamma\abs{P_i}$. Define $\dashint_{B_0}v_h\dx=:\mean{v_h}_0$. Also by shape-regularity, we find that the diameter of $P_i$ is comparable to $h_i$. Then, Poincar\'e's inequality yields
	
	\begin{equation}\label{eq:poincarePatch1}
		\int_{P_i}\bigabs{v_h-\mean{v_h}_0}\dx\lesssim h_i\int_{P_i}\abs{\nabla v_h}\dx.
	\end{equation}
	On the other hand, we have Poincar\'e's inequality on $T_0$ because $v_h$ is affine on any $T\in\mathcal{T}_h$ and we get
	\begin{equation}\label{eq:poincarePatch2}
		 \int_{P_i}\bigabs{\mean{v_h}_0}\dx\leq\frac{\abs{P_i}}{\abs{B_0}}\int_{B_0}\abs{v_h}\dx\lesssim\int_{T_0}\abs{v_h}\dx\lesssim h_i\int_{T_0}\abs{\nabla v_h}\dx\leq h_i\int_{P_i}\abs{\nabla v_h}\dx,
	\end{equation}
where the third inequality stems from the fact that $v_h$ vanishes at the node $x_0 \in T_0$.
Combining the inequalities \eqref{eq:poincarePatch1} and \eqref{eq:poincarePatch2} yields
	\begin{equation*}
		\int_{P_i}\abs{v_h}\dx\leq\int_{P_i}\bigabs{v_h-\mean{v_h}_0}\dx+\int_{P_i}\bigabs{\mean{v_h}_0}\dx\lesssim h_i\int_{P_i}\abs{\nabla v_h}\dx.
	\end{equation*}
	This concludes the proof.
\end{proof}

\begin{theorem}[Poincar\'e's inequality for $V_h$]\label{thm:Poincare}
	Let $\mathcal{T}_h$ be a shape-regular triangulation of the polyhedral domain $\Omega \subset \setR^n$ with associated finite element space $V_h$. Let $v_h\in V_h$ be a nonnegative function and let $A=\bigcup_{i} T_i$ be a connected set with diameter $R$ for a set of simplices $\{T_i\}\subset\mathcal{T}_h$.  Suppose that
	
	\begin{equation}\label{eq:PoincareAssumption}
		\bigg|\Theta\cap\bigg(\bigcup_{i\,:\,v_h(x_i)=0}P_i\bigg)\bigg|\geq \gamma |\Theta|
	\end{equation}
	for some $\gamma>0$. Then, there is a constant $c$ that depends on $\gamma$, the Poincar\'e constant of $\Theta$, and the shape-regularity parameter $\Gamma$ of the triangulation, such that
	\begin{equation*}
		\int_\Theta \abs{v_h}\dx\leq c R \int_\Theta \abs{\nabla v_h}\dx.
	\end{equation*}
\end{theorem}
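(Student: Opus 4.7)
The plan is to combine the continuous Poincar\'e inequality on $A$ with a simplex-by-simplex estimate that exploits the nodal vanishing of $v_h$. Set $B := A \cap \bigcup_{i\,:\,v_h(x_i)=0} P_i$, so the hypothesis reads $\abs{B} \geq \gamma \abs{A}$. The first step I would carry out is to identify $B$, up to a Lebesgue-null set, with the purely discrete object
\begin{equation*}
\tilde B := \bigcup\set{T \in \mathcal{T}_h : T \subset A \text{ and } v_h(x_i) = 0 \text{ at some node } x_i \text{ of } T}.
\end{equation*}
For one inclusion, if $T \subset A$ has a node $x_i$ with $v_h(x_i) = 0$, then $T \subset P_i$, and hence $T \subset A \cap P_i \subset B$. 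For the converse, if $x \in B$ lies in the interior of some $T \in \mathcal{T}_h$, then $T$ has positive-measure intersection with $A$ and therefore $T \subset A$ (as simplices of $\mathcal{T}_h$ have pairwise disjoint interiors and $A$ is a union of such simplices); and because $x \in P_i$ for some $i$ with $v_h(x_i)=0$, $T$ must be one of the simplices of $P_i$, forcing $x_i$ to be a node of $T$.

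On each $T \subset \tilde B$ the affine function $v_h$ vanishes at some node $x_i$ of $T$, so $v_h(x) = \nabla v_h|_T \cdot (x - x_i)$ and hence $\abs{v_h(x)} \leq h_T \abs{\nabla v_h|_T}$ pointwise on $T$. Since $T \subset A$ and $\diameter A = R$, we have $h_T \leq R$; summing over the disjoint simplices of $\tilde B$ then gives
\begin{equation*}
\int_B v_h \dx = \int_{\tilde B} v_h \dx \leq R \sum_{T \subset \tilde B} \int_T \abs{\nabla v_h} \dx \leq R \int_A \abs{\nabla v_h} \dx.
\end{equation*}
On the other hand, since $A$ is a connected union of shape-regular simplices, it is a Lipschitz domain admitting the standard Poincar\'e inequality with a constant $c_P = c_P(A)$:
\begin{equation*}
\int_A \abs{v_h - \mean{v_h}_A} \dx \leq c_P R \int_A \abs{\nabla v_h} \dx.
\end{equation*}

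To conclude, use $v_h \geq 0$ (so $\mean{v_h}_A \geq 0$) together with $\abs{B} \geq \gamma \abs{A}$:
\begin{equation*}
\abs{B}\,\mean{v_h}_A = \int_B \mean{v_h}_A \dx \leq \int_B v_h \dx + \int_B \abs{v_h - \mean{v_h}_A} \dx \leq \int_B v_h \dx + \int_A \abs{v_h - \mean{v_h}_A} \dx,
\end{equation*}
from which $\mean{v_h}_A \lesssim \tfrac{R(1 + c_P)}{\gamma \abs{A}} \int_A \abs{\nabla v_h} \dx$; multiplying by $\abs{A}$ and invoking once more that $\int_A v_h \dx = \abs{A}\mean{v_h}_A$ (because $v_h \geq 0$) yields the claim, with the constant depending on $\gamma$, on $c_P$, and on the shape-regularity parameter $\Gamma$ (the latter entering implicitly through the geometry of $\tilde B$). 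The main obstacle, modest as it is, lies in the first step: the indices $i$ in the definition of $B$ a priori range over all nodes of $\mathcal{T}_h$ and the patches $P_i$ can extend beyond $A$, so some care is needed to reduce $B$ to $\tilde B$; once this is done, the rest is two elementary Poincar\'e estimates glued together by a Chebyshev-style comparison of the means on $A$ and on $B$.
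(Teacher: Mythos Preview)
Your proof is correct and considerably more direct than the paper's. The paper splits $v_h = w_h + \tilde w_h$, where $w_h$ collects the nodal values at zero-nodes and their immediate neighbours and $\tilde w_h$ carries the rest; it then applies the patch Poincar\'e estimate of Lemma~\ref{lem:PoincarePatches} to $w_h$ and the continuous Poincar\'e inequality on $A$ to $\tilde w_h$, and must afterwards spend the bulk of the argument (inequalities~\eqref{eq:overlap}--\eqref{eq:wTildewIntegral}) showing that $\int_A \abs{\nabla w_h} + \abs{\nabla \tilde w_h} \lesssim \int_A \abs{\nabla v_h}$, which is where shape-regularity genuinely enters. You sidestep the decomposition altogether: the simplex-wise bound $\int_B v_h \le R\int_A \abs{\nabla v_h}$ is immediate from the affine structure, and the mean-value comparison closes the argument with the explicit constant $(1+c_P)/\gamma$.

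Two remarks. First, contrary to your closing parenthetical, shape-regularity is not actually used anywhere in your proof beyond what is already absorbed in $c_P$. Second, if in your second step you keep the pointwise bound $\abs{v_h}\le h_T\abs{\nabla v_h}$ on $T\subset\tilde B$ (valid regardless of sign) and then estimate $\abs{\mean{v_h}_A}$ rather than $\mean{v_h}_A$, you can drop the nonnegativity hypothesis as well; the paper's longer argument does not use it either. One small inaccuracy: a connected union of simplices need not be a Lipschitz domain (think of two simplices meeting only at a vertex), but since the theorem already hands you the Poincar\'e constant of $A$, this is harmless.
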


\begin{proof}
	First, note that $\Theta$ is a bounded Lipschitz domain and therefore, its Poincar\'e constant is finite. Let $\mathcal{N}$ be the index set of nodes that are either nodes $x_i$ with $v_h(x_i)=0$ or that are connected to one of those nodes in $\overline{\Theta}$ by an edge. We write $w_h:=\sum_{i\in\mathcal{N}}v_h(x_i)\psi_i$ and $\tilde{w}_h:=\sum_{i\notin \mathcal{N}}v_h(x_i)\psi_i$. Note that $v_h=w_h+\tilde{w}_h$ and $\tilde{w}_h=0$ on $\Theta\cap\left(\bigcup_{v_h(x_i)=0}P_i\right)$ and thus, $\dashint_{\Theta\cap\left(\bigcup_{v_h(x_i)=0}P_i\right)}\tilde{w}\dx=0.$ Therefore, assumption \eqref{eq:PoincareAssumption} implies that we can use Poincar\'e's inequality on $\tilde{w}_h$. This leaves us with
	\begin{equation}\label{eq:poincare1}
		\int_\Theta\abs{\tilde{w}_h}\dx\lesssim R \int_\Theta\abs{\nabla\tilde{w}_h}\dx.
	\end{equation}
	
	On the other hand, we know that $w_h$ has a zero on every patch $P_i\cap\Theta =\Omega(x_i)\cap\Theta$ for any node $x_i\in \overline{\Theta}$ because every node $x_i$ with $w_h(x_i)\neq 0$ has $i\in \mathcal{N}$ and is therefore connected to a node with $v(x_j)=0$ by an edge. This means that we can use Lemma \ref{lem:PoincarePatches} on any of those patches. This gives
	\begin{equation*}
		\int_\Theta \abs{w_h}\dx\leq\sum_{i\in\mathcal{N}}\int_{P_i\cap\Theta}\abs{w_h}\dx\lesssim \sum_{i\in\mathcal{N}} h_i \int_{P_i\cap\Theta}\abs{\nabla w_h}\dx.
	\end{equation*}
	We have $h_i\lesssim R$. Furthermore, the shape-regularity of $\mathcal{T}_h$ guarantees that each simplex can only be part of  a uniformly bounded number of patches. Thus, we get
	\begin{equation}\label{eq:poincare2}
		\int_\Theta \abs{w_h}\dx\lesssim R\int_\Theta\abs{\nabla w_h}\dx.
	\end{equation}
Adding the inequalities \eqref{eq:poincare1} and \eqref{eq:poincare2} yields 
\begin{equation}\label{eq:poincare3}
	\int_\Theta |v_h|\dx\leq \int_\Theta \abs{w_h}\dx+\int_\Theta\abs{\tilde{w}_h}\dx\lesssim R\left(\int_\Theta\abs{\nabla w_h}\dx+\int_\Theta\abs{\nabla\tilde{w}_h}\dx\right).
\end{equation}
We define
$$O:=\left(\bigcup_{i\in\mathcal{N}} \overline{P_i}\right)\cap \Theta.$$
Note that by definition, if $i\in\mathcal{N}$, then $P_i$ has a node where $v_h$ has a zero. Therefore, we get
\begin{equation}\label{eq:overlap}
	\int_O\abs{\nabla v_h}\dx\gtrsim \sum_{i\in\mathcal{N}} \int_{P_i\cap\Theta}\abs{\nabla v_h}\dx\gtrsim\sum_{i\in\mathcal{N}} h_i^{-1}\int_{P_i\cap\Theta}\abs{v_h}\dx,
\end{equation}
where $h_i$ is the diameter of one of the simplices $T\subset P_i$ and where we have used the fact that any $T\in\mathcal{T}_h$ is only part of finitely many patches and Lemma \ref{lem:PoincarePatches}. 

For simplicity, let us write $\Psi_{\mathcal{N}}=\sum_{i\in\mathcal{N}}\psi_i$. On each $T\in\mathcal{T}_h$, we therefore have $\abs{\Psi_{\mathcal{N}}}\leq 1$ and $\abs{\nabla\Psi_{\mathcal{N}}}\lesssim h_T^{-1}$. Furthermore, we can write 
\begin{equation*}\label{eq:wAlternative}
	w_h=\Pih\left(v_h\Psi_{\mathcal{N}}\right).
\end{equation*}
Thus, we can use equation \eqref{eq:PiL-stable_2} and the product rule to find
\begin{equation}\label{eq:gradWAlternative}
	\abs{\nabla w_h}\lesssim\abs{\nabla v_h}+h_T^{-1}\abs{v_h}
\end{equation}
on every $T\in\mathcal{T}_h$. Additionally, we have
\begin{equation}\label{eq:gradWIntegral}
	\int_O\abs{\nabla w_h}\dx\leq\sum_{i\in\mathcal{N}}\int_{P_i\cap\Theta}\abs{\nabla w_h}\dx.
\end{equation}
Combining inequalities \eqref{eq:gradWAlternative}, \eqref{eq:gradWIntegral} and equation \eqref{eq:Intersection} yields
\begin{equation}\label{eq:gradWIntegral2}
	\int_O\abs{\nabla w_h}\dx\lesssim \sum_{i\in\mathcal{N}}\int_{P_i\cap \Theta}\abs{\nabla v_h}+h_i^{-1}\abs{v_h}\dx.
\end{equation}
Therefore, we can combine inequalities \eqref{eq:overlap} and \eqref{eq:gradWIntegral2} to get
\begin{equation}\label{eq:gradWIntegral3}
	\int_O\abs{\nabla w_h}\dx\lesssim \int_O\abs{\nabla v_h}\dx.
\end{equation}
Analogously, we find that
\begin{equation}\label{eq:gradWTildeAlternative}
	\int_O\abs{\nabla \tilde{w}_h}\dx\lesssim \int_O\abs{\nabla v_h}\dx.
\end{equation}
Now note that $w_h=0$ on $\Theta\setminus O$. As $v_h=w_h+\tilde{w}_h$, we therefore have 
\begin{equation}\label{eq:wOutsideO}
	\abs{\nabla v_h}=\abs{\nabla w_h}+\abs{\nabla\tilde{w}_h}
\end{equation} 
on $\Theta\setminus O$. This means that we can finally combine inequalities \eqref{eq:gradWIntegral3}, \eqref{eq:gradWTildeAlternative} and \eqref{eq:wOutsideO} to find
\begin{equation}\label{eq:wTildewIntegral}
	\int_\Theta \abs{\nabla w_h}+\abs{\nabla\tilde{w}_h}\dx\lesssim \int_\Theta\abs{\nabla v_h}\dx.
\end{equation}
Together with inequality \eqref{eq:poincare3}, that completes the proof.
\end{proof}

We will also need a lemma that concerns connecting pairs of nodes in an $A$-nonobtuse triangulation in a way that allows us to work as if we were on a uniformly $A$-acute triangulation. This seems to be a small change but it allows us, for example, in the case where $A$ is the identity matrix to use triangulations that are generated by newest vertex bisection on a square, which are important cases, especially if we consider that it is one of the novelties of our approach that we do not demand quasi-uniformity of the triangulation.
\begin{lemma}\label{lem:nonObtuseAcuteSequence}
	Let $\mathcal{T}_h$ be an $A$-nonobtuse, shape-regular triangulation of a polyhedral domain $\Omega\subset \setR^n$. Denote the nodes of a simplex $T\in\mathcal{T}_h$ by $x_0,\dots,x_n$. Define $\skp{f}{g}_{T,A}:=\int_T A\nabla f\cdot\nabla g\dx$. Then, there is a constant $\tau \in (0,1)$ that only depends on the shape-regularity parameter $\Gamma$ of $\mathcal{T}_h$, such that any pair of
nodes $x_j$ and $x_k$ of $T$ can be connected by a sequence of nodes $\{x_j=y_0,y_1,\dots,y_N=x_k\}$ of $T$,  with $N \leq n$, belonging to the same simplex $T$ and such that
	\begin{equation}\label{eq:AcuteSequence}
		-\int_\Omega A\nabla \psi_i\cdot \nabla \psi_{i+1}\dx\geq \tau \int_\Omega A\nabla\psi_i\cdot\nabla\psi_i\dx,\qquad i=0,\dots,N-1,
	\end{equation}
	where $\psi_i$ is the basis function associated with the node $y_i$ in the sense of Definition \ref{def:LagrangeBasis}
%
\end{lemma}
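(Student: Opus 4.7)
The plan is to work on a fixed simplex $T \in \mathcal{T}_h$ with nodes $x_0, \ldots, x_n$, study the local stiffness matrix $M_{ij} := \int_T A\nabla\psi_i \cdot \nabla\psi_j \dx$, and then transfer the resulting bounds to the global integrals that actually appear in \eqref{eq:AcuteSequence}. Since each $\psi_i$ is affine on $T$, setting $\bar{A}_T := \dashint_T A \dx$ and letting $N$ denote the $n \times (n{+}1)$ matrix with columns $\nabla\psi_0|_T, \ldots, \nabla\psi_n|_T$ gives the factorisation $M = \abs{T}\, N^\top \bar{A}_T\, N$. Four properties then drive everything: (a) by uniform ellipticity of $A$ and $\abs{\nabla\psi_i}\sim h_T^{-1}$ from \eqref{eq:h_Tdef}, $M_{ii}\sim h_T^{n-2}$ and $\abs{M_{ij}}\lesssim h_T^{n-2}$ uniformly in $\Gamma$ and the ellipticity constants of $A$; (b) $A$-nonobtuseness gives $M_{ij}\le 0$ for $i\neq j$; (c) $\sum_i \psi_i \equiv 1$ on $T$ forces $\sum_i \nabla\psi_i = 0$, hence $N\mathbf{1} = 0$, whence both $M\mathbf{1} = 0$ and $\mathbf{1}^\top M = 0$, i.e.\ both the row and column sums of $M$ vanish; (d) any $n$ of the vectors $\nabla\psi_i$ are linearly independent, so $N$, and hence $M$, has rank $n$.

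The heart of the argument is to show that the directed graph $\vec{G}_T$ on $\{0,\dots,n\}$ with an edge $i \to j$ whenever $-M_{ij} > 0$ is strongly connected, and that its positive edge weights can be taken uniformly comparable to the diagonals $M_{ii}$. Strong connectivity is proved by contradiction: if some proper nonempty $S\subsetneq\{0,\dots,n\}$ had no outgoing edges, i.e.\ $M_{ij} = 0$ for all $i\in S$, $j\notin S$, then by (c) we would also have $\sum_{j\in S}M_{ij} = 0$ for each $i\in S$, so for the affine function $\Phi_S:=\sum_{i\in S}\psi_i$ we would obtain $\int_T A\nabla\Phi_S\cdot\nabla\Phi_S\dx = \sum_{i,j\in S}M_{ij} = 0$; ellipticity would then force $\nabla\Phi_S \equiv 0$ on $T$, but $\Phi_S$ takes both values $0$ and $1$ on the nodes of $T$, a contradiction. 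Hence $\vec G_T$ is strongly connected, so any ordered pair $(x_j, x_k)$ is joined by a directed path $y_0 = x_j, y_1, \dots, y_N = x_k$ of length $N\le n$ along strictly positive edges; together with $A$-nonobtuseness this already gives the \emph{local} bound $-\int_T A\nabla\psi_{y_l}\cdot\nabla\psi_{y_{l+1}}\dx > 0$ at each step of such a chain.

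To upgrade the positivity to a \emph{uniform} constant $\tau_0 > 0$ I would invoke compactness: after normalising, $\widetilde M := M/h_T^{n-2}$ lies in a compact set $\mathcal{M}$ of $(n{+}1)\times(n{+}1)$ matrices still satisfying (a)--(d), as $T$ ranges over shape-regular $A$-nonobtuse simplices and $A$ over admissible coefficients. For each $\widetilde M \in \mathcal{M}$ the best constant $\tau^{\star}(\widetilde M)$ in \eqref{eq:AcuteSequence} (maximised over chains, minimised over pairs $(j,k)$) is strictly positive by the previous paragraph, and any specific positive-weight path witnessing $\tau^{\star}(\widetilde M_\infty) > 0$ at a limit point $\widetilde M_\infty\in\mathcal{M}$ uses edges that remain strictly positive for nearby $\widetilde M$; this lower semicontinuity yields $\inf_{\mathcal M}\tau^{\star} =: \tau_0 > 0$. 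A routine local-to-global step then finishes the proof: $A$-nonobtuseness makes every simplexwise contribution to $\int_\Omega A\nabla\psi_i\cdot\nabla\psi_{i+1}\dx$ non-positive, so $-\int_\Omega A\nabla\psi_i\cdot\nabla\psi_{i+1}\dx \ge -\int_T A\nabla\psi_i\cdot\nabla\psi_{i+1}\dx \ge \tau_0 M_{ii}$, while shape-regularity bounds the number of simplices meeting $x_i$ and hence $\int_\Omega A\abs{\nabla\psi_i}^2\dx \lesssim h_T^{n-2} \lesssim M_{ii}$, so choosing $\tau$ as a suitable multiple of $\tau_0$ yields \eqref{eq:AcuteSequence}. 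The main obstacle I anticipate is the non-collapse in this compactness step: as $\widetilde M_m \to \widetilde M_\infty$ some off-diagonal entries may vanish, threatening to disconnect $\vec{G}_{T_\infty}$ and force $\tau^{\star}\to 0$. The rescue is that property (d) --- the non-degeneracy of shape-regular simplices together with ellipticity of $A$ --- is preserved in the limit, so the strong-connectivity argument still applies to $\widetilde M_\infty$ and any positive-weight chain there survives small perturbations, which is exactly the continuity required.
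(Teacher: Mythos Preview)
Your proof is correct, and it shares its central idea with the paper: for any proper nonempty subset $S$ of the vertices of $T$, the ``cut'' quantity $\sum_{i\in S,\,j\notin S}(-M_{ij})$ equals $\int_T A\nabla\Phi_S\cdot\nabla\Phi_S\,dx$ with $\Phi_S=\sum_{i\in S}\psi_i$, and this is strictly positive by ellipticity since $\Phi_S$ is nonconstant on $T$. The difference lies in how the uniform constant $\tau$ is extracted. You first deduce strong connectivity of the directed edge graph qualitatively and then invoke compactness of the normalised stiffness matrices $\widetilde M$, together with lower semicontinuity of $\tau^\star$, to upgrade strict positivity to a uniform lower bound. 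The paper bounds the cut quantitatively from the outset: Poincar\'e's inequality on $T$ combined with $\|\Phi_S\|_{L^\infty(T)}=1$ gives directly
\[
\int_T A\nabla\Phi_S\cdot\nabla\Phi_S\,dx \gtrsim h_T^{-2}\abs{T}\sim \max_l\skp{\nabla\psi_l}{\nabla\psi_l}_A,
\]
so for every partition $J\sqcup K$ some edge from $J$ to $K$ already satisfies \eqref{eq:AcuteSequence} with an explicit $\tau$, and a reachability argument then yields the chain. This avoids compactness entirely and makes the dependence of $\tau$ on $\Gamma$ and the ellipticity constants of $A$ explicit; your route is less direct and the compactness step (closure of $\mathcal{M}$, lower semicontinuity of $\tau^\star$) needs the care you give it. The local-to-global transfer via $A$-nonobtuseness and the bounded overlap of patches is the same in both arguments.
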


\begin{proof} The proof proceeds in two steps. For two $n$-component vector functions $\zeta$ and $\xi$ defined on $\Omega$ we shall write
	\begin{equation*}
\skp{\zeta}{\xi}_{T,A}:=\int_T A \zeta \cdot \xi \dx\quad \mbox{and}\quad \skp{\zeta}{\xi}_A:=\int_\Omega A \zeta \cdot \xi \dx.
	\end{equation*}

 STEP 1. Let $L \subset \set{0, \dots, n}$ and
  $M:= \set{0,\dots,n} \setminus L$ with $L,M \neq \emptyset$. We will show that
  \begin{align}\label{eq:GraphConnected}
    \max_{l\in L,\,m \in M} (-\skp{\nabla \psi_l}{\nabla \psi_m}_{T,A}) \geq\tau
    \max_{r\in\{0,\dots,n\}}
    \skp{\nabla \psi_r}{\nabla \psi_r}_A
  \end{align}
	for some $\tau>0$ that depends on the shape-regularity constant $\Gamma$.
  Since the $\skp{\nabla \psi_l}{\nabla \psi_m}_{T,A}\leq 0$ for $l \neq m$,
  it suffices to prove that
  \begin{align*}
    (I) &:= \sum_{l\in L, \, m \in M} (-\skp{\nabla \psi_l}{\nabla \psi_m}_{T,A} )\gtrsim
          \max_{r\in\{0,\dots,n\}}
          \skp{\nabla \psi_r}{\nabla \psi_r}_A.
  \end{align*}
  As $\sum_{r=0}^n \psi_r=1$ on $T$ and $L \cup M = \set{0,\dots,n}$, we obtain
  \begin{align*}
    (I) &= \sum_{l\in L} \bigg( -\skp{\nabla \psi_l}{\nabla 1}_{T,A} + \sum_{m \in L}  \skp{\nabla \psi_l}{\nabla  \psi_m}_{T,A} \bigg) = \sum_{l,m \in L} \skp{\nabla \psi_l}{\nabla \psi_m}_{T,A} =: (II).
  \end{align*}
  Now define~$\eta := \sum_{l \in L} \psi_l$.
  Since~$\eta(x_m)=0$ for some~$m \in M$ and $A$ is uniformly elliptic, we obtain by \Poincare{}'s
  inequality, Lemma \ref{lem:prod-rearrange} and inverse estimates that
  \begin{align*}
    (II) = \skp{\nabla\eta}{\nabla\eta}_{T,A} &\geq c \norm{\nabla \eta}_{L^2(T)}^2  \sim h_T^{-2} \norm{\eta}_{L^2(T)}^2
           \sim h_T^{-2} \abs{T}\,\norm{\eta}_{L^\infty(T)}^2 \\
         &\sim h_T^{-2} \abs{T} \sim  \max_{r \in \{0,\dots,n\}} \skp{\nabla \psi_r}{\nabla \psi_r}_{T,A}\sim \max_{r \in \{0,\dots,n\}} \skp{\nabla \psi_r}{\nabla \psi_r}_A,
  \end{align*}
  where we have used the shape-regularity of the triangulation. That completes Step 1.

STEP 2.
Let us now assume that we were not able to find a sequence that connects $x_i$ and $x_j$ such that inequality \eqref{eq:AcuteSequence} holds. This would imply that we can find disjoint sets $K\subset\{1,\dots,n\}$ and $J\subset\{1,\dots,n\}$ with $i\in K$ and $j\in J$ such that inequality \eqref{eq:GraphConnected} does not hold, but this is obviously a contradiction.

\end{proof}

\section{A discrete Caccioppoli-type inequality}
In this section, we prove a discrete version of inequality \eqref{eq:ContCac}. First, we will define what we mean by an approximate solution to an elliptic partial differential equation.
\begin{definition}\label{def:DiscreteSolution}
	Let $A\in L^{\infty}(\Omega;\setR^{n\times n})$ be a uniformly elliptic matrix-valued  function.	
	Furthermore, let $F\in L^p(\Omega;\setR^n)$ and $f\in L^q(\Omega)$ be given functions with $p>n$ and $q>n/2$. Furthermore, let $\mathcal{T}_h$ be a triangulation of the polyhedral domain $\Omega$ and $V_h$ the corresponding finite element space. We call $u_h\in V_{h,0}$ an approximate solution to
	\begin{equation*}
		-\divergence(A\nabla u) = f - \divergence  F
	\end{equation*}
	with zero Dirichlet boundary condition provided that we have
	\begin{equation}\label{eq:DiscreteInhom}
		\int_\Omega A\nabla u_h\cdot \nabla \phi_h\dx=\int_\Omega f\phi_h\dx+\int_\Omega F\cdot \nabla \phi_h\dx
	\end{equation}
	for every $\phi_h\in V_{h,0}$.
\end{definition}

To prove the Caccioppoli inequality \eqref{eq:ContCac} we had to test equation \eqref{eq:contWeakSol} against $(u-c)_+\abs{\eta}^2$. In the discrete case however, $(u_h-c)_+\abs{\eta}^2$
is not an admissible test function in equation \eqref{eq:DiscreteInhom} because it is not in $V_{h,0}$. In particular, this leads to the conclusion that we will have to use a nodal version of $(u_h-c)_+$. This unfortunately means that $\nabla(u_h-c)_+$ and $\nabla u_h$ will not coincide on $\supp (u_h-c)_+$ anymore.
To overcome this problem, we will introduce the notion of \textit{discrete subsolution}, prove a Caccioppoli-type inequality for discrete subsolutions, and prove that the nodal version of $(u_h-c)_+$ is indeed a discrete subsolution under an additional assumption on the right-hand side, which we shall state.
\begin{definition}\label{def:Subsolution}
		Let $A\in L^{\infty}(\Omega;\setR^{n\times n})$ be a uniformly elliptic matrix-valued  function and let
		$F\in L^p(\Omega;\setR^n)$ and $f\in L^q(\Omega)$ be given functions with with $p>n$ and $q>n/2$. Furthermore, let $\mathcal{T}_h$ be a triangulation of the polyhedral domain $\Omega$ and $V_h$ the corresponding finite element space. We call $u_h\in V_{h,0}$ a discrete subsolution to
	\begin{equation*}
		-\divergence(A\nabla u) = f - \divergence  F
	\end{equation*}
	provided that we have
	\begin{equation}\label{eq:DiscreteInhomSubsol}
		\int_\Omega A\nabla u_h\cdot \nabla \phi_h\dx\leq \int_\Omega f\phi_h\dx+\int_\Omega F\cdot \nabla \phi_h\dx
	\end{equation}
	for every $\phi_h\in V_{h,0}$ with $\phi_h\geq 0$ on $\overline\Omega$. Of course, every solution in the sense of equation \eqref{eq:DiscreteInhom} is automatically a  subsolution in the sense of inequality \eqref{eq:DiscreteInhomSubsol}.
\end{definition}

Henceforth we shall concentrate on the case of $n \geq 3$ and recall that $2^\star:=2n/(n-2)$. The statements and proofs of the results below are easily adjusted in the case of $n=2$. Note that a similar inequality for discrete solutions to equations with vanishing right-hand side has been shown as an intermediate result in \cite{NS74} on quasi-uniform meshes and in \cite{DGS11} for highly graded meshes using superapproximation techniques.

\begin{theorem}[Caccioppoli inequality for discrete subsolutions]\label{thm:CaccioppoliInhom}
	Let $A\in L^{\infty}(\Omega;\setR^{n\times n})$ be a uniformly elliptic matrix-valued function and let
	$F\in L^p(\Omega;\setR^n)$ and $f\in L^q(\Omega)$ be given functions with $\frac1 p = \frac1 n - \frac\delta n$ and $\frac1 q = \frac2 n - \frac\delta n$, where $\delta>0$. Furthermore, let $\mathcal{T}_h$ be a shape-regular triangulation of the polyhedral domain $\Omega$ and $V_h$ the corresponding finite element space.
	Let $u_h\in V_h$ be a discrete subsolution to $-\divergence(A\nabla u)=f-\divergence  F$ in the sense of inequality \eqref{eq:DiscreteInhomSubsol}, let $\eta\in C_0^{\infty}(\Omega)$ be nonnegative, and define $\eta_h=\Pi_h\eta$. Then, we have
	\begin{equation}\label{eq:DiscreteCaccioppoliInhom}
		\begin{aligned}
		&\int_\Omega \abs{\nabla u_h}^2\abs{\eta_h}^2\dx\lesssim \int_\Omega \abs{u_h}^2\abs{\nabla \eta_h}^2\dx\\
		&	\quad+\left(\norm{F}_{L^p(\Omega)}^2+\norm{f}_{L^q(\Omega)}^2\right)\left(\norm{ \eta_h}_{L^{2^\star}(\supp u_h)}^2+\norm{\nabla \eta_h}_{L^2(\supp u_h)}^2\right)\abs{\supp\eta_h\cap\support\, u_h}^{\frac{2\delta}{n}}.
	\end{aligned}
	\end{equation}
\end{theorem}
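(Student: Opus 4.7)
The proof follows the template of Theorem~\ref{thm:ContBounded0}: test the subsolution inequality~\eqref{eq:DiscreteInhomSubsol} against a discrete analogue of $u_h\eta_h^2$, apply uniform ellipticity of $A$ to produce the main term $\int_\Omega|\nabla u_h|^2\eta_h^2\dx$, and handle cross and data terms by Young's and H\"older's inequalities. Two new features must be addressed relative to the continuous argument: the natural candidate $u_h\eta_h^2$ is piecewise cubic and hence not in $V_{h,0}$, so it must be replaced by its Lagrange interpolant $\phi_h:=\Pih(u_h\eta_h^2)$ at the cost of a projection commutator; and $f,F$ lie only in $L^q,L^p$, so the Sobolev embedding $W^{1,2}_0\embedding L^{2^\star}$ together with interpolation inequalities are required to match the claimed support exponent $2\delta/n$.

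\textbf{Test function and main term.} Taking $\phi_h=\Pih(u_h\eta_h^2)\in V_{h,0}$ (admissible in~\eqref{eq:DiscreteInhomSubsol} in the intended setting where $u_h\geq 0$ at every node, which ensures $\phi_h\geq 0$), I expand on each simplex
\begin{equation*}
\nabla\phi_h=\eta_h^2\nabla u_h+2u_h\eta_h\nabla\eta_h+\nabla\bigl(\Pih(u_h\eta_h^2)-u_h\eta_h^2\bigr).
\end{equation*}
Uniform ellipticity yields $\int_\Omega A\nabla u_h\cdot(\eta_h^2\nabla u_h)\dx\geq c\int_\Omega|\nabla u_h|^2\eta_h^2\dx$; an $\epsilon$-Young step on the cross term produces a small absorbable multiple of the main term, together with the first right-hand side contribution $C_\epsilon\int_\Omega|u_h|^2|\nabla\eta_h|^2\dx$.

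\textbf{Commutator (the main obstacle).} The term $\mathcal{C}:=\int_\Omega A\nabla u_h\cdot\nabla(\Pih(u_h\eta_h^2)-u_h\eta_h^2)\dx$ cannot be handled by a single application of Lemma~\ref{lem:prod-commutator}, since $u_h\eta_h^2$ is not a product of two $V_h$-functions. I propose to iterate the lemma: setting $B:=\Pih(u_h\eta_h)\in V_h$ and writing $u_h\eta_h^2=B\eta_h+(u_h\eta_h-B)\eta_h$, one gets
\begin{equation*}
u_h\eta_h^2-\Pih(u_h\eta_h^2)=\bigl(B\eta_h-\Pih(B\eta_h)\bigr)+\bigl((u_h\eta_h-B)\eta_h-\Pih((u_h\eta_h-B)\eta_h)\bigr).
\end{equation*}
Lemma~\ref{lem:prod-commutator} applies to the pair $(B,\eta_h)\in V_h\times V_h$ directly for the first bracket; the second bracket is controlled pointwise by $2\max_T|(u_h\eta_h-B)\eta_h|$ via the $L^\infty$-stability~\eqref{eq:PiL-stable_1} of $\Pih$, with $\max_T|u_h\eta_h-B|$ itself bounded by Lemma~\ref{lem:prod-commutator}~\eqref{eq:prod-commutator2} applied to $(u_h,\eta_h)$. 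The inverse estimate~\eqref{eq:ProdInverse} transfers these to elementwise gradient bounds of the form $h_T^{-1}\dashint_T|u_h|\cdot\dashint_T|\eta_h-\mean{\eta_h}_T|$, and Poincar\'e's inequality on~$T$ converts the second factor into $h_T\,\dashint_T|\nabla\eta_h|$. Cauchy--Schwarz, shape-regularity and a final $\epsilon$-Young yield $|\mathcal{C}|\leq\epsilon\int_\Omega|\nabla u_h|^2\eta_h^2\dx+C_\epsilon\int_\Omega|u_h|^2|\nabla\eta_h|^2\dx$, both absorbable. I expect this step to be the main technical obstacle, because of the bookkeeping needed in tracking the iterated projection.

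\textbf{Inhomogeneous data.} I avoid the naive $\|f\|_{L^q}\|\phi_h\|_{L^{q'}}$ split (which produces the wrong support exponent) and instead expand $\int_\Omega F\cdot\nabla\phi_h\dx=\int_\Omega F\cdot(\eta_h^2\nabla u_h+2u_h\eta_h\nabla\eta_h)\dx$ plus a gradient-commutator contribution controlled exactly as $\mathcal{C}$. Young's inequality reduces the two explicit pieces to $\epsilon\int|\nabla u_h|^2\eta_h^2\dx+\int|u_h|^2|\nabla\eta_h|^2\dx+C_\epsilon\int|F|^2\eta_h^2\dx$. The key integral $\int|F|^2\eta_h^2\dx$ is estimated by H\"older with exponents $p/2$ and $p/(p-2)$, giving
\begin{equation*}
\int_\Omega|F|^2\eta_h^2\dx\leq\|F\|_{L^p}^2\,\|\eta_h\|_{L^{2p/(p-2)}(\supp\eta_h\cap\supp u_h)}^2;
\end{equation*}
since $p>n$ one has $2p/(p-2)<2^\star$, and a further H\"older step upgrades this to $\|\eta_h\|_{L^{2^\star}(\supp u_h)}^2|\supp\eta_h\cap\supp u_h|^{2\delta/n}$, the exponent being forced by $\tfrac1n-\tfrac1p=\tfrac{\delta}{n}$. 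The $f$-term $\int_\Omega f u_h\eta_h^2\dx$ is handled analogously: pair $f\eta_h$ with $u_h\eta_h$ in H\"older's inequality with exponents $(2^\star)'$ and $2^\star$, use $\|u_h\eta_h\|_{L^{2^\star}}\lesssim\|\eta_h\nabla u_h\|_{L^2}+\|u_h\nabla\eta_h\|_{L^2}$ via $W^{1,2}_0\embedding L^{2^\star}$, and bound $\|f\eta_h\|_{L^{(2^\star)'}(\supp)}\leq\|f\|_{L^q}\|\eta_h\|_{L^{2^\star}(\supp)}|\supp|^{\delta/n}$ by a further H\"older, the exponent again being $\tfrac1q-\tfrac1{q_\ast}=\tfrac{\delta}{n}$ thanks to $\tfrac1q=\tfrac{2-\delta}{n}$. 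Squaring via Young's inequality and collecting all contributions, then absorbing the small multiples of $\int_\Omega|\nabla u_h|^2\eta_h^2\dx$ into the left-hand side, yields~\eqref{eq:DiscreteCaccioppoliInhom}.
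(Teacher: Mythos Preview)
Your proposal is essentially correct and reaches the same conclusion, but it differs from the paper's argument in two structural choices, and one of your intermediate bounds is stated incorrectly.

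\textbf{The paper's shortcut.} Rather than testing with $\Pih(u_h\eta_h^2)$ and iterating Lemma~\ref{lem:prod-commutator}, the paper introduces $\rho_h:=\Pih(\eta_h^2)\in V_h$ and tests with $\phi_h=\Pih(\rho_h u_h)$. Because both $\rho_h$ and $u_h$ lie in $V_h$, a \emph{single} application of Lemma~\ref{lem:prod-commutator} handles the commutator $\rho_h u_h-\Pih(\rho_h u_h)$, and the resulting bound contains exactly the factor $\max_T\abs{\nabla\rho_h}\lesssim\max_T\abs{\eta_h}\max_T\abs{\nabla\eta_h}$ needed for absorption. The Jensen-type inequality $\eta_h^2\leq\rho_h$ of Lemma~\ref{lem:Pi-etah2} then recovers the $\eta_h^2$ weight on the left at the very end. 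Your iterated-commutator route also works, but it requires the extra observation $h_T\max_T\abs{\nabla\eta_h}\lesssim\max_T\eta_h$ (which holds because $\eta_h\geq 0$ is affine on $T$) to dispose of the residual term coming from the second bracket.

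\textbf{A slip to fix.} The elementwise gradient bound you state for the commutator, $h_T^{-1}\dashint_T\abs{u_h}\cdot\dashint_T\abs{\eta_h-\mean{\eta_h}_T}$, is missing the factor $\max_T\abs{\eta_h}$. Without it, Young's inequality produces $\epsilon\int_\Omega\abs{\nabla u_h}^2\dx$ with no weight, which cannot be absorbed into the left-hand side $\int_\Omega\abs{\nabla u_h}^2\eta_h^2\dx$. The correct bound is $\max_T\abs{\nabla(u_h\eta_h^2-\Pih(u_h\eta_h^2))}\lesssim\max_T\abs{u_h}\max_T\abs{\eta_h}\max_T\abs{\nabla\eta_h}$: the $\eta_h$ factor survives from $\dashint_T\abs{B-\mean{B}_T}\lesssim\max_T\abs{u_h}\max_T\abs{\eta_h}$ in the first bracket, and from the explicit $\eta_h$ multiplying $(u_h\eta_h-B)$ in the second.

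\textbf{Data terms.} Your global-H\"older treatment of $f$ and $F$ is a legitimate alternative. The paper instead works elementwise: it bounds $\abs{\Pih(\rho_h u_h)}$ and $\abs{\nabla\Pih(\rho_h u_h)}$ on each $T\in\mathcal{S}$ by $L^\infty$-stability and Lemma~\ref{lem:prod-rearrange}, applies H\"older on each simplex with the precise exponents $\tfrac1q+\tfrac1{2^\star}+\tfrac1{2^\star}+\tfrac{\delta}n=1$ (resp.\ $\tfrac1p+\tfrac12+\tfrac1{2^\star}+\tfrac{\delta}n=1$), and then H\"older for sums. The outcome is identical; the paper's version has the minor advantage that the commutator between $\phi_h$ and $u_h\eta_h^2$ never needs separate treatment in the data terms (everything is bounded through $\max_T\abs{u_h\rho_h}$ directly).
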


\begin{proof}
	We write $\rho_h=\Pih\left(\eta_h^2\right)$ and test equation \eqref{eq:DiscreteInhomSubsol} against $\phi_h=\Pih\left(\rho_h u_h\right)$ to get
	\begin{equation}\label{eq:SubsolTested1}
		L:=\int_\Omega A\nabla u_h\cdot \nabla \Pih\left(\rho_h u_h\right)\dx\leq \int_\Omega f\Pih\left(\rho_h u_h\right)\dx+\int_\Omega F\cdot \nabla \Pih\left(\rho_h u_h\right)\dx.
	\end{equation}
	First we will consider the left-hand side of equation \eqref{eq:SubsolTested1}:
	\begin{equation}\label{eq:SubsolLHS}
		\begin{aligned}
			L&=\int_\Omega A\nabla u_h\cdot \nabla \Pih\left(\rho_h u_h\right)\dx\\
			&=\int_\Omega A\nabla u_h\cdot \nabla\left(\rho_h u_h\right)\dx-\int_\Omega A\nabla u_h\cdot \nabla \left(\rho_h u_h-\Pih\left(\rho_h u_h\right)\right)\dx\\
			&=:L_I-L_{II}.
		\end{aligned}
	\end{equation}
	Next we decompose $L_I$ in equation \eqref{eq:SubsolLHS} once more to get
	\begin{equation}\label{eq:SubsolLHS1}
		L_I=\int_\Omega \rho_h A\nabla u_h\cdot \nabla u_h \dx + \int_\Omega (A \nabla u_h \cdot\nabla \rho_h)\, u_h \dx =: L_{I_1} + L_{I_2}.
	\end{equation}
	For $L_{I_1}$ we use that $A$ is uniformly elliptic (cf. inequality \eqref{eq:uniformlyElliptic}) to get
	\begin{equation}\label{eq:SubsolLHS2}
		\int_\Omega \rho_h A\nabla u_h\cdot \nabla u_h\dx\gtrsim \int_\Omega \rho_h \abs{\nabla u_h}^2\dx=:\tilde{L}_{I_1}.
	\end{equation}	
	For $R_{I_2}$ in equation \eqref{eq:SubsolLHS} we use that $A\in L^{\infty}(\Omega;\setR^{n\times n})$ to get
	\begin{equation}\label{eq:SubsolLHS3}
		\begin{aligned}
			\bigabs{L_{I_2}}&\leq\norm{A}_{L^\infty(\Omega)}\int_\Omega \abs{\nabla u_h} \abs{\nabla \rho_h} \abs{u_h}\dx\\
			&\lesssim \sum_{T\in\mathcal{T}_h} \abs{T} \max_T \abs{\nabla u_h} \max_T \abs{\nabla\rho_h} \max_T \abs{u_h} =: R_{I}.
		\end{aligned}
	\end{equation}
	We will now estimate the term $L_{II}$ in inequality \eqref{eq:SubsolLHS}.
	By Lemma~\ref{lem:prod-commutator} (cf. inequality \eqref{eq:prod-commutator2}), $A\in L^\infty(\Omega;\setR^{n\times n})$ and Remark \ref{rem:bound1} we get
	\begin{equation}\label{eq:SubsolLHS4}	
		\begin{aligned}
			\bigabs{L_{II}}&\leq \sum_{T\in\mathcal{T}_h} |T|\max_T  \abs{\nabla u_h} \,\max_T\bigabs{\nabla (\rho_h u_h-\Pih (\rho_h u_h))} \\
			&\lesssim \sum_{T\in\mathcal{T}_h} \abs{T} \max_T \abs{\nabla u_h} \max_T \abs{\nabla\rho_h} \max_T \abs{u_h}\,\dx = R_{I}.
		\end{aligned}
	\end{equation}
  Substituting inequalities \eqref{eq:SubsolLHS2} and \eqref{eq:SubsolLHS3} into equation \eqref{eq:SubsolLHS1}, and inserting equation \eqref{eq:SubsolLHS1} and inequality \eqref{eq:SubsolLHS3} into \eqref{eq:SubsolLHS}, and then equation \eqref{eq:SubsolLHS} back into \eqref{eq:SubsolTested1} yields
	\begin{equation}\label{eq:SubsolTested2-1}
		\begin{aligned}
			\tilde{L}_{I_1}&= \int_\Omega \rho_h \abs{\nabla u_h}^2\dx\lesssim \sum_{T\in\mathcal{T}_h} \abs{T} \max_T \abs{\nabla u_h} \max_T \abs{\nabla\rho_h} \max_T \abs{u_h}\\
			&\quad +\int_\Omega f\Pih\left(\rho_h u_h\right)\dx+\int_\Omega F\cdot \nabla \Pih\left(\rho_h u_h\right)\dx=:R_{I}+R_{II}+R_{III}.
		\end{aligned}
	\end{equation}
	Next, note that for any $T\in\mathcal{T}_h$ we have
  \begin{equation}\label{eq:NablaRhoH}
    \max_T \abs{\nabla \rho_h} = \max_T \bigabs{\nabla \Pih (\eta_h^2)} \lesssim \max_T \abs{\nabla (\eta_h^2)} \lesssim \max_T \abs{\eta_h} \max_T \abs{\nabla \eta_h}.
  \end{equation}
	This allows us to estimate $R_{I}$ from inequality \eqref{eq:SubsolTested2-1}:
  \begin{equation*}
    R_{I}  \lesssim \sum_{T\in\mathcal{T}_h} \abs{T} \max_T \abs{\nabla u_h} \max_T \abs{\eta_h} \max_T \abs{\nabla \eta_h} \max_T \abs{u_h}.
  \end{equation*}
	Now we use Lemma \ref{lem:prod-rearrange} to get
	\begin{equation*}
		R_{I} \lesssim  \sum_T \abs{T} \dashint_T \abs{\nabla u_h} \abs{\eta_h}\, \abs{\nabla \eta_h}\abs{u_h}\,\dx.
	\end{equation*}
By applying Young's inequality this yields
	\begin{equation*}
		R_{I}\leq \sum_{T\in\mathcal{T}_h} \abs{T} \bigg( \epsilon \dashint_T \abs{\nabla u_h}^2\abs{\eta_h}^2\,\dx + C_\epsilon \dashint_T \abs{\nabla \eta_h}^2\abs{u_h}^2\,\dx \bigg)
	\end{equation*}
	for any $\epsilon>0$. Finally, we note that we know from Lemma \ref{lem:Pi-etah2} that $\eta_h^2\leq\rho_h$ and get
	\begin{equation}\label{eq:SubsolRHS1}
		R_{I}\leq  \epsilon \tilde{L}_{I_1} + C_\epsilon \int_\Omega \abs{\nabla \eta_h}^2 \abs{u_h}^2\,\dx
	\end{equation}
	after performing the sums. We can now insert inequality \eqref{eq:SubsolRHS1} into \eqref{eq:SubsolTested2-1} and absorb $\epsilon \tilde{L}_{I_1}$ into the left-hand side to get
	\begin{equation}\label{eq:SubsolTested2}
		\begin{aligned}
			\tilde{L}_{I_1}&= \int_\Omega \rho_h \abs{\nabla u_h}^2\dx\lesssim \int_\Omega \abs{\nabla \eta_h}^2 \abs{u_h}^2\,\dx\\
			&\quad +\int_\Omega f\Pih\left(\rho_h u_h\right)\dx+\int_\Omega F\cdot \nabla \Pih\left(\rho_h u_h\right)\dx=:\tilde{R}_{I}+R_{II}+R_{III}.
		\end{aligned}
	\end{equation}
	
	We will now consider $R_{II}$ in inequality \eqref{eq:SubsolTested2}. Let $\mathcal{S}:=\{T\in\mathcal{T}_h:T\subset \support\, \eta_h\cap\supp u_h\}$; we then have that
	\begin{equation*}
		\bigabs{R_{II}}=\left|\int_\Omega f\Pih\left(\rho_h u_h\right)\dx\right|\leq \sum_{T\in\mathcal{S}}\abs{T}\left(\dashint_T \abs{f}\dx\right)\max_{ T}\left|u_h\rho_h\right|.
	\end{equation*}
	We can now use inequality \eqref{eq:PiL-stable_1} to deduce that
	\begin{equation*}
		\bigabs{R_{II}}\leq \sum_{T\in\mathcal{S}}\abs{T}\left(\dashint_T \abs{f}\dx\right)\max_{ T}\abs{u_h}\left(\max_{ T}\abs{\eta_h}\right)^2.
	\end{equation*}
	Using Lemma \ref{lem:prod-rearrange}, we get
	\begin{equation*}
		\begin{aligned}
			\bigabs{R_{II}}&\lesssim \sum_{T\in\mathcal{S}}\abs{T}\left(\dashint_T \abs{f}\dx\right)\left(\dashint_T \abs{u_h\eta_h}\dx\right)\left(\dashint_T \abs{\eta_h}\dx\right)\\
			&=\sum_{T\in\mathcal{S}}\abs{T}^{-2}\norm{f}_{L^1(T)}\norm{u_h\eta_h}_{L^1(T)}\norm{\eta_h}_{L^1(T)}.
		\end{aligned}
	\end{equation*}
	Recall that $f\in L^q(\Omega)$ with $\frac 1q= \frac 2n - \frac \delta n$ and $\delta >0$. H\"older's inequality yields
	\begin{align*}
		\bigabs{R_{II}}&\lesssim\sum_{T\in\mathcal{S}}\abs{T}^{-2}\norm{f}_{L^q(T)}\abs{T}^{1-\frac 2n + \frac \delta n}\norm{u_h\eta_h}_{L^{2^\star}(T)}\abs{T}^{\frac 12 + \frac 1n} \norm{\eta_h}_{L^{2^\star}(T)}\abs{T}^{\frac 12 + \frac 1n}\\
		&=\sum_{T\in\mathcal{S}}\abs{T}^{\frac\delta n}\norm{f}_{L^q(T)}\norm{u_h\eta_h}_{L^{2^\star}(T)}\norm{\eta_h}_{L^{2^\star}(T)}.
	\end{align*}
	Now note that $\frac 1q+\frac{1}{2^{\star}}+\frac{1}{2^{\star}}+\frac{\delta}n=1$. We can use H\"older's inequality for sums to get
	\begin{equation}\label{eq:SubsolRHS2}
		\begin{aligned}
			\bigabs{R_{II}}&\lesssim \left(\sum_{T\in\mathcal{S}} \abs{T}\right)^{\frac \delta n}\,\left(\sum_{T\in\mathcal{S}} \norm{f}_{L^q(T)}^q\right)^{\frac 1q}
			\,\left(\sum_{T\in\mathcal{S}} \norm{u_h\eta_h}_{L^{2^\star}(T)}^{2^{\star}}\right)^{\frac{1}{2^{\star}}}\,\left(\sum_{T\in\mathcal{S}} \norm{\eta_h}_{L^{2^{\star}}(T)}^{2^{\star}}\right)^{\frac{1}{2^{\star}}}.
		\end{aligned}
	\end{equation}
	As the interiors of the elements $T$ involved in the sums above are disjoint, we know that
	\begin{equation}\label{eq:NormSum}
		\sum_{T\in\mathcal{S}} \norm{g}_{L^r(T)}^r=\norm{g}_{L^r\left(\bigcup_{T\in\mathcal{S}} T\right)}^r
		\end{equation}
		for any $1\leq r <\infty$ and any $g\in L^r\left(\bigcup_{T\in\mathcal{S}} T\right)$. Using this in inequality \eqref{eq:SubsolRHS2} and noting that $\mathcal{S} \subset \supp u_h$ gives
	\begin{equation*}
		\bigabs{R_{II}}\lesssim \bigabs{\supp \eta_h\cap \supp u_h}^{\frac \delta n} \norm{f}_{L^q(\Omega)} \norm{u_h\eta_h}_{L^{2^{\star}}(\Omega)}\norm{\eta_h}_{L^{2^{\star}}(\supp u_h)}.
	\end{equation*}
	Young's inequality and the Sobolev embedding theorem then yield	
	\begin{equation}\label{eq:SubsolRHS3}
		\begin{aligned}
			\bigabs{R_{II}}\lesssim\epsilon\norm{\nabla\left(u_h\eta_h\right)}_{L^{2}(\Omega)}^2+ C_{\epsilon} \bigabs{\supp \eta_h\cap \supp u_h}^{\frac {2\delta} {n}} \norm{f}_{L^q(\Omega)}^2 \norm{\eta_h}_{L^{2^\star}(\supp u_h)}^2.
		\end{aligned}
	\end{equation}
	Note that
	\begin{equation}\label{eq:NormNablaUHEtaH}
		\begin{aligned}
			\norm{\nabla\left(u_h\eta_h\right)}_{L^{2}(\Omega)}^2&\lesssim \norm{u_h\nabla\eta_h}_{L^{2}(\Omega)}^2+\norm{\eta_h\nabla u_h}_{L^{2}(\Omega)}^2\\
			&\leq \int_\Omega \rho_h \abs{\nabla u_h}^2\dx+ \int_\Omega \abs{u_h}^2\abs{\nabla \eta_h}^2\dx,
		\end{aligned}
	\end{equation}
	where we used Lemma \ref{lem:Pi-etah2} in the second step.
	Finally, substituting this into inequality \eqref{eq:SubsolRHS3} gives
	\begin{equation}\label{eq:SubsolRHS4}
	\begin{aligned}
		\hspace{-2.5mm}\bigabs{R_{II}}&\lesssim  \epsilon \int_\Omega \rho_h \abs{\nabla u_h}^2\dx + \epsilon \int_\Omega \abs{u_h}^2\abs{\nabla \eta_h}^2\dx \\
		&\quad + C_{\epsilon} \bigabs{\supp \eta_h\cap \supp u_h}^{\frac {2\delta} {n}} \norm{f}_{L^q(\Omega)}^2 \norm{\eta_h}_{L^{2^\star}(\supp u_h)}^2.
	\end{aligned}
	\end{equation}
	
	We will now focus on $R_{III}$ in inequality \eqref{eq:SubsolTested2}.
	Splitting it up into a sum over simplices and using inequality \eqref{eq:PiL-stable_1} yields
	\begin{equation}\label{eq:SubsolRHS5}
		\begin{aligned}
			\bigabs{R_{III}}&=\left|\int_\Omega F\cdot\nabla\big(\Pih\left(u_h\rho_h\right)\big)\dx\right|\\
			&\leq  \sum_{T\in\mathcal{S}} \abs{T} \dashint_T \abs{F}\dx\, \max_{ T}\left|\nabla \Pih \left(u_h \rho_h\right)\right|\\
			&\lesssim \sum_{T\in\mathcal{S}} \abs{T} \dashint_T \abs{F}\dx\,  \max_{ T}\left|\nabla \left(u_h \rho_h\right)\right|\\
			&\leq  \sum_{T\in\mathcal{S}} \abs{T} \dashint_T \abs{F}\dx \, \max_{ T}\left|\left(\nabla u_h\right) \rho_h\right|+ \sum_{T\in\mathcal{S}} \abs{T} \dashint_T \abs{F}\dx \, \max_{ T}\left|\left(\nabla \rho_h\right) u_h\right|\\
			&=: R_{III_1}+R_{III_2}.
		\end{aligned}
	\end{equation}
	We rearrange $R_{III_1}$ from inequality \eqref{eq:SubsolRHS5} and use Lemma \ref{lem:prod-rearrange}:
	\begin{align*}
		R_{III_1}&\lesssim\sum_{T\in\mathcal{S}} \abs{T} \dashint_T \abs{F}\dx\, \dashint_T \abs{(\nabla u_h)\eta_h}\dx \, \dashint_T \abs{\eta_h}\dx\\
		&=\sum_{T\in\mathcal{S}} \abs{T}^{-2} \int_T \abs{F}\dx \int_T \abs{(\nabla u_h)\eta_h}\dx \int_T \abs{\eta_h}\dx.
	\end{align*}
	Recall that $F\in L^p(\Omega)$ with $\frac 1p = \frac 1n - \frac \delta n$. H\"older's inequality gives
	\begin{align*}
		R_{III_1}&\lesssim \sum_{T\in\mathcal{S}} \abs{T}^{-2} \norm{F}_{L^p(T)}\abs{T}^{1-\frac 1 n +\frac \delta n}\norm{(\nabla u_h)\eta_h}_{L^2(T)}\abs{T}^{\frac 12} \norm{\eta_h}_{L^{2^{\star}}(T)}\abs{T}^{\frac 12 + 	\frac 1n}\\
		&=\sum_{T\in\mathcal{S}} \abs{T}^{\frac\delta n} \norm{F}_{L^p(T)}\norm{(\nabla u_h)\eta_h}_{L^2(T)} \norm{\eta_h}_{L^{2^{\star}}(T)}.
	\end{align*}
	Now H\"older's inequality for sums and the identity \eqref{eq:NormSum} give
	\begin{equation*}
		R_{III_1}\lesssim \abs{\supp \eta_h \cap \supp u_h}^{\frac{\delta}{n}}\norm{F}_{L^p(\Omega)}\norm{(\nabla u_h)\eta_h}_{L^2(\Omega)}\norm{\eta_h}_{L^{2^{\star}}(\supp u_h)}.
	\end{equation*}
	Finally, we can use $\eta_h^2\leq\rho_h$ and Young's inequality to get
	\begin{equation}\label{eq:SubsolRHS6}
		\begin{aligned}
			R_{III_1}\lesssim \epsilon \int_\Omega \abs{\nabla u_h}^2 \rho_h\dx + C_{\epsilon}\norm{F}_{L^p(\Omega)}^2\abs{\supp\eta_h\cap \supp u_h}^{\frac {2\delta} n} \norm{\eta_h}_{L^{2^\star}(\supp u_h)}^2.
		\end{aligned}
	\end{equation}
	
	Next we consider $R_{III_2}$ from inequality \eqref{eq:SubsolRHS5}. Recall $\max_{ T} \abs{\nabla \rho_h}\lesssim \max_{ T}\abs{\eta_h}\max_{ T}\abs{\nabla\eta_h}$ as in inequality \eqref{eq:NablaRhoH}. We get
	\begin{equation*}
		R_{III_2}\lesssim\sum_{T\in\mathcal{S}} \abs{T}\dashint_T\abs{F}\dx \, \max_{ T}\abs{u_h} \max_{ T}\abs{\eta_h}\max_{ T}\abs{\nabla\eta_h}.
	\end{equation*}
	With Lemma \ref{lem:prod-rearrange}, this becomes
	\begin{equation*}
		R_{III_2}\lesssim \sum_{T\in\mathcal{S}} \abs{T}^{-2} \int_T\abs{F}\dx \int_T\abs{u_h\eta_h}\dx \int_T\abs{\nabla \eta_h}\dx.
	\end{equation*}
	Completely analogously to $R_{III_1}$, we get
	\begin{equation*}
		R_{III_2}\leq \epsilon \norm{\nabla(u_h\eta_h)}_{L^2(\Omega)}^2+C_\epsilon\abs{\supp\eta_h\cap \supp u_h}^{\frac{2\delta}{n}}\norm{F}_{L^p(\Omega)}^2\norm{\nabla\eta_h}_{L^2(\supp u_h)}^2.
	\end{equation*}
	We use inequality \eqref{eq:NormNablaUHEtaH} to deduce that
	\begin{equation}\label{eq:SubsolRHS7}
		\begin{aligned}
		R_{III_2}&\lesssim \epsilon \int_\Omega \abs{\nabla u_h}^2 \rho_h\dx+\epsilon \int_\Omega \abs{u_h}^2\abs{\nabla \eta_h}^2\dx\\
		&\quad+C_\epsilon \abs{\supp\eta_h\cap \supp u_h}^{\frac{2\delta}{n}}\norm{F}_{L^p(\Omega)}^2\norm{\nabla\eta_h}_{L^2(\supp u_h)}^2.
		\end{aligned}
	\end{equation}
	
	Inserting inequalities \eqref{eq:SubsolRHS6} and \eqref{eq:SubsolRHS7} into inequality \eqref{eq:SubsolRHS5} and using Poincar\'e's inequality on the last factor in the second summand in \eqref{eq:SubsolRHS6} gives, after renaming constants,
	\begin{equation}\label{eq:SubsolRHS8}
		\begin{aligned}
		\bigabs{R_{III}}&\lesssim \epsilon \int_\Omega \abs{\nabla u_h}^2\rho_h\dx+\epsilon \int_\Omega \abs{u_h}^2\abs{\nabla\eta_h}^2\dx\\
		&\quad+C_\epsilon \abs{\supp\eta_h\cap \supp u_h}^{\frac{2\delta}{n}}\norm{F}_{L^p(\Omega)}^2\norm{\nabla\eta_h}_{L^2(\supp u_h)}^2.
		\end{aligned}
	\end{equation}
	We can finally substitute inequalities \eqref{eq:SubsolRHS4} and \eqref{eq:SubsolRHS8} into inequality \eqref{eq:SubsolTested2} and absorb the $$\epsilon\int_\Omega\abs{\nabla u_h}^2\rho_h\dx$$ term into the left-hand side. This yields
	\begin{equation}
		\begin{aligned}
		&\int_\Omega \rho_h \abs{\nabla u_h}^2\dx\lesssim \int_\Omega \abs{\nabla \eta_h}^2 \abs{u_h}^2\,\dx\\
		&\quad+\abs{\supp\eta_h\cap \supp u_h}^{\frac{2\delta}{n}}\left(\norm{\eta_h}_{L^{2^\star}(\supp u_h)}^2+\norm{\nabla\eta_h}_{L^2(\supp u_h)}^2\right)\left(\norm{F}_{L^p(\Omega)}^2+\norm{f}_{L^q(\Omega)}^2\right).
			\end{aligned}
		\end{equation}
		Using $\eta_h^2\leq\rho_h$ from Lemma \ref{lem:Pi-etah2} on the left-hand side proves inequality \eqref{eq:DiscreteInhomSubsol} and the theorem.
\end{proof}

\smallskip

This discrete Caccioppoli inequality has the following direct corollary.
\begin{corollary}\label{cor:CaccioppoliInhomCor}
	Let $A\in L^{\infty}(\Omega;\setR^{n\times n})$ be a uniformly elliptic matrix-valued function and let $F\in L^p(\Omega;\setR^n)$ and $f\in L^q(\Omega)$
	be given functions with $\frac 1p = \frac 1 n-\frac {\delta}{n}$ and $\frac 1q=\frac 2n -\frac{\delta}{n}$, where $\delta>0$. Furthermore, let $\mathcal{T}_h$ be a shape-regular triangulation of the polyhedral domain $\Omega$ and $V_h$ the corresponding finite element space.
	Let $u_h\in V_h$ be a discrete subsolution to $-\divergence(A\nabla u)=f-\divergence F$ in the sense of inequality \eqref{eq:DiscreteInhomSubsol}, let $\eta\in C_0^{\infty}(\Omega)$ be nonnegative and define $\eta_h:=\Pi_h\eta$. Then, we have
	\begin{equation}\label{eq:DiscreteCaccioppoliInhomCor}
	\begin{aligned}
		&\int_\Omega \abs{\nabla\left(u_h \eta_h\right)}^2\dx\lesssim \int_\Omega \abs{u_h}^2\abs{\nabla \eta_h}^2\dx\\
		&\quad+\left(\norm{F}_{L^p}^2+\norm{f}_{L^q}^2\right)\left(\norm{\nabla \eta_h}_{L^2(\support\, u_h)}^2+\norm{\eta_h}^2_{L^{2^\star}(\support\, u_h)}\right)\abs{\supp\eta_h\cap \supp u_h}^{\frac{2\delta}{n}}.
		\end{aligned}
	\end{equation}
\end{corollary}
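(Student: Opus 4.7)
The plan is to deduce the corollary from Theorem~\ref{thm:CaccioppoliInhom} by a one-line application of the product rule. Since $u_h,\eta_h\in V_h$, the product $u_h\eta_h$ is piecewise smooth and the standard product rule gives
\begin{equation*}
  \nabla(u_h\eta_h) = u_h\,\nabla\eta_h + \eta_h\,\nabla u_h
\end{equation*}
on the interior of every simplex. Squaring and using the elementary inequality $(a+b)^{2}\leq 2a^{2}+2b^{2}$ yields
\begin{equation*}
  \abs{\nabla(u_h\eta_h)}^{2}\leq 2\,\abs{u_h}^{2}\abs{\nabla\eta_h}^{2}+2\,\abs{\eta_h}^{2}\abs{\nabla u_h}^{2}
\end{equation*}
almost everywhere on $\Omega$, so that after integration
\begin{equation*}
  \int_{\Omega}\abs{\nabla(u_h\eta_h)}^{2}\dx
  \lesssim \int_{\Omega}\abs{u_h}^{2}\abs{\nabla\eta_h}^{2}\dx
          +\int_{\Omega}\abs{\eta_h}^{2}\abs{\nabla u_h}^{2}\dx.
\end{equation*}

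The first term on the right-hand side is already in the form required by the statement of the corollary. For the second term, I would invoke Theorem~\ref{thm:CaccioppoliInhom} directly; it bounds $\int_{\Omega}\abs{\nabla u_h}^{2}\abs{\eta_h}^{2}\dx$ by exactly the quantity appearing on the right-hand side of \eqref{eq:DiscreteCaccioppoliInhomCor}. Summing the two contributions and absorbing numerical constants into the $\lesssim$ symbol produces \eqref{eq:DiscreteCaccioppoliInhomCor}.

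There is no genuine obstacle here since the real work has been carried out in Theorem~\ref{thm:CaccioppoliInhom}. The only minor point to verify is that the product rule is valid almost everywhere even though $u_h\eta_h$ is only piecewise affine-times-affine (so its gradient has jumps across interelement faces, which form a set of measure zero). This follows from the fact that $u_h\eta_h\in W^{1,\infty}(\Omega)$ and its weak gradient coincides elementwise with the classical gradient, so no additional boundary contributions appear when integrating.
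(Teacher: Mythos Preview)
Your proposal is correct and follows exactly the same route as the paper: apply the product rule and the elementary inequality $(a+b)^2\leq 2a^2+2b^2$ to obtain $\int_\Omega|\nabla(u_h\eta_h)|^2\lesssim \int_\Omega|\nabla u_h|^2|\eta_h|^2\dx+\int_\Omega|u_h|^2|\nabla\eta_h|^2\dx$, and then invoke Theorem~\ref{thm:CaccioppoliInhom}. Your additional remarks about the validity of the product rule a.e.\ are fine but not needed in the paper's terse version.
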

\begin{proof}
	We use that
	\begin{equation*}
		\int_\Omega \abs{\nabla\left(u_h \eta_h\right)}^2\dx \lesssim \int_\Omega \abs{\nabla u_h}^2\abs{\eta_h}^2\dx+\int_\Omega \abs{u_h}^2\abs{\nabla \eta_h}^2\dx
	\end{equation*}
	and apply Theorem \ref{thm:CaccioppoliInhom}.
\end{proof}

We will now introduce the \textit{nodal maximum} of two functions in $V_h$. This will also lead to a suitable notion of the \textit{positive part} of a continuous piecewise affine function.
\begin{definition}\label{def:NodalMax}
	Let $\mathcal{T}_h$ be a triangulation of the polyhedral domain $\Omega$ and $V_h$ the corresponding finite element space of continuous piecewise affine functions. We define the nodal maximum of two functions $u_h\in V_h$ and $v_h\in V_h$ as
	\begin{equation*}
		u_h\curlyvee v_h:=\sum_i \left(u_h(x_i)\vee v_h(x_i)\right)\psi_i.
	\end{equation*}
	Furthermore, we define the nodal positive part of a function $u_h\in V_h$ as
	\begin{equation*}
		(u_h)_+:=u_h\curlyvee 0=\sum_i \left(u_h(x_i)\right)_+\psi_i.
	\end{equation*}
\end{definition}
We also need the following technical assumption of $F$, which will be referred to as assumption ($\star$).

\begin{definition}\label{def:AssumptionStar}
	Let $F\in L^p(\Omega;\setR^n)$. We will say that $F$ satisfies assumption ($\star$) if there exists a $G\in L^p(\Omega;\setR^n)$ such that
	\begin{equation}\label{eq:AssumptionStar}
		\int_\Omega G\cdot \nabla \phi\dx\geq\Bigabs{\int_\Omega F\cdot \nabla \phi\dx}\geq 0\\
	\end{equation}
	for any nonnegative $\phi\in C^\infty_0(\Omega)$. By density, this inequality extends to all nonnegative $\phi\in W^{1,p'}_0(\Omega)$.
\end{definition}

\begin{remark}
	The condition \eqref{eq:AssumptionStar} is restrictive, but still admits several nontrivial cases. First and foremost, it includes every $F\in L^p(\Omega;\setR^n)$ where $\divergence F$ has a fixed sign as a distribution, with $G:=-(\mbox{sgn } \divergence F)\, F$. It also includes cases such as the following, where $\divergence F$ changes sign in the sense of distributions:
suppose that $n=2$, $\Omega=(-2,2)^2$ and, for $x = (x_1,x_2)^{\rm T} \in \Omega$, let
	\begin{equation*}
		F(x):= \left\{
		\begin{array}{rl}
			-e_1&\text{ for } \abs{x_1}\geq 1,\\
			~e_1&\text{ for } \abs{x_1}< 1,
		\end{array}
               \right.
	\end{equation*}
	where $e_1=(1,0)^{\rm T}$. Obviously $F\in L^\infty(\Omega;\setR^2)$ and $\divergence F=2\delta_{-1}\otimes \chi_{(-2,2)}-2\delta_{1}\otimes \chi_{(-2,2)}$, where $\delta_t$ denotes the Dirac-distribution concentrated on the point $t\in\setR$. Consider the function $G$ defined by
	\begin{equation*}
		G(x)= \left\{
		\begin{array}{rl}
		3e_1&\text{ for } x_1\leq -1,\\
			e_1&\text{ for } -1< x_1<1,\\
			-e_1&\text{ for } x_1\geq 1,
		\end{array}
              \right.	
        \end{equation*}
	which gives $\divergence G=-2\delta_{-1}\otimes \chi_{(-2,2)}-2\delta_{1}\otimes \chi_{(-2,2)}$, whereby $\divergence (G \pm F) = -4\delta_{\pm 1}\otimes \chi_{(-2,2)}\leq 0$ in the sense of distributions, and therefore assumption ($\star$) is satisfied in this case. In general, ($\star$) is fulfilled if there is an $L^p$-function $G$ with $-\divergence G \geq \abs{\divergence F}$ in the sense of distributions.
\end{remark}

This leads to the following theorem, which connects the notions of \textit{positive part} and \textit{subsolution}.
\begin{theorem}\label{thm:SubsolNodalMax}
	Let $\mathcal{T}_h$ be an $A$-nonobtuse triangulation of the polyhedral domain $\Omega$. Let $u_h$ be a discrete subsolution to
	$-\divergence(A\nabla u_1) = f_1-\divergence  F_1$ and suppose that $v_h$ is a discrete subsolution to
	$-\divergence(A\nabla u_2) = f_2 - \divergence  F_2$ for $L^p$-functions $F_1$ and $F_2$ and $L^q$-functions $f_1$ and $f_2$. Suppose further
	that $F_1$ satisfies assumption ($\star$) from Definition \ref{def:AssumptionStar} with dominating function $G_1$ and $F_2$ satisfies
	assumption ($\star$) with dominating function $G_2$. Then, $u_h \curlyvee v_h$ is a discrete subsolution to
	$$-\divergence(a (u_1\curlyvee u_2)) = f_1\vee f_2-\divergence  (G_1+G_2).$$
\end{theorem}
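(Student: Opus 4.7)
The plan is to test the two subsolution inequalities against carefully chosen nonnegative pieces of an arbitrary test function $\phi_h \in V_{h,0}$ with $\phi_h \geq 0$, and then combine the results. Partition the nodes of $\mathcal{T}_h$ into $I := \{i : u_h(x_i) \geq v_h(x_i)\}$ and $J := \{j : u_h(x_j) < v_h(x_j)\}$, and split $\phi_h = \phi_h^I + \phi_h^J$ where $\phi_h^I := \sum_{i \in I} \phi_h(x_i)\psi_i$ and $\phi_h^J := \sum_{j \in J} \phi_h(x_j)\psi_j$. Both pieces lie in $V_{h,0}$ and are pointwise nonnegative (because $\phi_h \geq 0$ forces $\phi_h(x_i) \geq 0$ at every node), so each is an admissible test function in \eqref{eq:DiscreteInhomSubsol}. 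By Definition \ref{def:NodalMax}, the nodal maximum admits the matching decomposition $w_h := u_h \vee v_h = \sum_{i \in I} u_h(x_i)\psi_i + \sum_{j \in J} v_h(x_j)\psi_j$.

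The core step is the sign comparison
\begin{equation*}
\int_\Omega A\nabla w_h \cdot \nabla \phi_h \dx \;\leq\; \int_\Omega A\nabla u_h \cdot \nabla \phi_h^I \dx + \int_\Omega A\nabla v_h \cdot \nabla \phi_h^J \dx.
\end{equation*}
Expanding both sides in the Lagrange basis and using the identifications above, the contributions arising from pairs of indices that both lie in $I$ or both lie in $J$ cancel identically between the two sides, and the difference collapses to a sum of cross terms of the form $\int_\Omega A\nabla \psi_i \cdot \nabla \psi_j\dx$ with $i \in I$ and $j \in J$. A direct computation shows that the scalar prefactor of each such cross term is a sum of products of the nonnegative quantities $(u_h-v_h)(x_i)$, $(v_h-u_h)(x_j)$, $\phi_h(x_i)$ and $\phi_h(x_j)$, and is therefore $\geq 0$. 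Since $i \in I$ and $j \in J$ imply $i \neq j$, summing the $A$-nonobtuseness condition \eqref{eq:aNonObtuse} over all $T \in \mathcal{T}_h$ gives $\int_\Omega A\nabla \psi_i \cdot \nabla \psi_j\dx \leq 0$, and the displayed inequality follows. Establishing this sign identity is the main technical content of the proof, and is the single place where the $A$-nonobtuse hypothesis is used.

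The rest is routine bookkeeping. Applying the subsolution properties of $u_h$ and $v_h$ bounds the right-hand side of the previous display by
\begin{equation*}
\int_\Omega \big(f_1 \phi_h^I + f_2 \phi_h^J\big)\dx + \int_\Omega \big(F_1 \cdot \nabla \phi_h^I + F_2 \cdot \nabla \phi_h^J\big)\dx.
\end{equation*}
Since $\phi_h^I, \phi_h^J \geq 0$ and $f_k \leq f_1 \vee f_2$ pointwise, the first integral is dominated by $\int_\Omega (f_1 \vee f_2)\,\phi_h \dx$. For the vector-field terms, assumption $(\star)$ from Definition \ref{def:AssumptionStar} applied to each $F_k$ against the nonnegative test function $\phi_h^*\in\{\phi_h^I,\phi_h^J\}$ gives both $\int_\Omega F_k \cdot \nabla \phi_h^*\dx \leq \int_\Omega G_k \cdot \nabla \phi_h^*\dx$ and $\int_\Omega G_k \cdot \nabla \phi_h^*\dx \geq 0$; summing the four resulting inequalities produces
\begin{equation*}
\int_\Omega \big(F_1 \cdot \nabla \phi_h^I + F_2 \cdot \nabla \phi_h^J\big)\dx \;\leq\; \int_\Omega (G_1 + G_2) \cdot \nabla \phi_h \dx.
\end{equation*}
Chaining the estimates and recalling that $\phi_h$ was an arbitrary nonnegative element of $V_{h,0}$ yields the claimed subsolution inequality for $w_h$ with source $f_1 \vee f_2$ and dominating field $G_1 + G_2$.
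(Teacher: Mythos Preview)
Your proof is correct and follows essentially the same approach as the paper. The paper establishes the subsolution inequality for each individual basis function $\psi_j$ (assuming without loss of generality that $u_h(x_j)\geq v_h(x_j)$ and comparing $\int_\Omega A\nabla(u_h\vee v_h)\cdot\nabla\psi_j$ with $\int_\Omega A\nabla u_h\cdot\nabla\psi_j$ via $A$-nonobtuseness), and then takes nonnegative linear combinations; your decomposition $\phi_h=\phi_h^I+\phi_h^J$ simply aggregates these nodewise cases in one pass, with the identical sign computation on the cross terms and the same use of assumption~($\star$) for the divergence part.
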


\begin{proof}
	As a first step, we will show that
	\begin{equation}\label{eq:NodalMaxBasis}
		\int_\Omega A\nabla (u_h \curlyvee v_h)\cdot\nabla\psi_j\dx\leq \int_\Omega (f_1 \vee f_2)\psi_j\dx+\left(\int_\Omega F_1\cdot \nabla\psi_j\dx\right)\vee \left(\int_\Omega F_2\cdot \nabla \psi_j\dx\right)
	\end{equation}
	for all $j$.
	So we fix an arbitrary $j$ and assume w.l.o.g. that $u_h(x_j)\geq v_h(x_j)$ and therefore $u_h(x_j)\vee v_h(x_j)=u_h(x_j)$. As $\mathcal{T}_h$ is $A$-nonobtuse, we have $\int_\Omega A\nabla\psi_i\cdot\nabla\psi_j\dx\leq 0$ for all $i\neq j$ from equation \eqref{eq:aNonObtuse} and find
	\begin{equation}\label{eq:NodalMax1}
		\begin{aligned}
			&\int_\Omega A \nabla u_h\cdot\nabla\psi_j\dx = \sum_i u_h(x_i) \int_\Omega A\nabla\psi_i\cdot\nabla\psi_j\dx\\
			&= u_h(x_j) \int_\Omega A\nabla\psi_j\cdot\nabla\psi_j\dx+\sum_{i\neq j} u_h(x_i)\int_\Omega A\nabla\psi_i\cdot\nabla\psi_j\dx\\
			&\geq (u_h(x_j)\vee v_h(x_j)) \int_\Omega A\nabla\psi_j\cdot\nabla\psi_j\dx+\sum_{i\neq j} (u_h(x_i)\vee v_h(x_i))\int_\Omega A\nabla\psi_i\cdot\nabla\psi_j\dx\\
			&=\int_\Omega A\nabla (u_h\curlyvee v_h)\cdot\nabla\psi_j\dx.
		\end{aligned}
	\end{equation}
	On the other hand, we have
	\begin{equation}\label{eq:NodalMax2}
		\int_\Omega (f_1 \vee f_2)\psi_j\dx\geq \int_\Omega f_1\psi_j\dx
	\end{equation}
	and
	\begin{equation}\label{eq:NodalMax3}
		\left(\int_\Omega F_1\cdot \nabla\psi_j\dx\right)\vee \left(\int_\Omega F_2\cdot \nabla \psi_j\dx\right)\geq \int_\Omega F_1\cdot \nabla\psi_j\dx.
	\end{equation}
	Combining inequalities \eqref{eq:NodalMax1}, \eqref{eq:NodalMax2} and \eqref{eq:NodalMax3} with
	\begin{equation*}
		\int_\Omega A\nabla u_h  \cdot \nabla\psi_j \dx\leq\int_\Omega F_1\cdot \nabla \psi_j+ f_1\psi_j\dx,
	\end{equation*}
	which follows from the fact that $u_h$ is a discrete subsolution to $-\divergence(A\nabla u_1)=f_1-\divergence  F_1$, yields inequality \eqref{eq:NodalMaxBasis}.
	We will now use the fact that $F_1$ and $F_2$ satisfy assumption ($\star$) and therefore inequality \eqref{eq:AssumptionStar} with dominating functions $G_1$ and $G_2$ respectively. We thus have that
	\begin{equation}\label{eq:NodalMax4}
		\begin{aligned}
			\left(\int_\Omega F_1\cdot \nabla\psi_j\dx\right)\vee \left(\int_\Omega F_2\cdot \nabla \psi_j\dx\right)&\leq \left(\int_\Omega G_1\cdot \nabla\psi_j\dx\right)\vee \left(\int_\Omega G_2\cdot \nabla \psi_j\dx\right)\\
			&\leq\int_\Omega \left(G_1+ G_2\right)\cdot \nabla \psi_j\dx.
			\end{aligned}
		\end{equation}
		Together with inequality \eqref{eq:NodalMaxBasis} this gives
		\begin{equation}\label{eq:NodalMaxBasis2}
			\int_\Omega A\nabla (u_h \curlyvee v_h)\cdot\nabla\psi_j\dx\leq \int_\Omega (f_1 \vee f_2)\psi_j\dx+\int_\Omega \left(G_1+ G_2\right)\cdot \nabla \psi_j\dx.
		\end{equation}
		
		In general, for a nonnegative $\phi_h\in  V_{h,0}$ we write $\phi_h=\sum_j \phi_h(x_j)\psi_j$ and use the fact that both sides of inequality \eqref{eq:NodalMaxBasis2} are linear in $\psi_j$:
		\begin{align*}
			\int_\Omega A\nabla (u_h \curlyvee v_h)\cdot\nabla\phi_h\dx &=\sum_j\phi_h(x_j)\int_\Omega A\nabla (u_h \curlyvee v_h)\cdot\nabla\psi_j\dx\\
			&\leq \sum_j \phi_h(x_j)\int_\Omega (f_1 \vee f_2)\psi_j\dx+\int_\Omega \left(G_1+ G_2\right)\cdot \nabla \psi_j\dx\\
			&=\int_\Omega (f_1 \vee f_2)\phi_h\dx+\int_\Omega \left(G_1+ G_2\right)\cdot \nabla \phi_h\dx.
		\end{align*}
		This implies that $u_h\vee v_h$ is a discrete subsolution to $-\divergence  (A\nabla (u_1\curlyvee u_2))=f_1\vee f_2 -\divergence(G_1+G_2)$ and proves the theorem.
\end{proof}

By setting $f_1 = f$, $F_1 = F$, $f_2=0$ and $F_2=0$ we deduce the following immediate corollary.
\begin{corollary}\label{cor:PositivePartSubsol}
    Let $\mathcal{T}_h$ be an $A$-nonobtuse triangulation of the polyhedral domain $\Omega$. Let $u_h$ be a discrete subsolution
    to $-\divergence(A \nabla u) = f-\divergence  F$ for $F\in L^p(\Omega; \setR^n)$ and $f\in L^q(\Omega)$, with $p$ and $q$ as previously. Furthermore, suppose that $F$ satisfies assumption
    ($\star$) from Definition \ref{def:AssumptionStar} with dominating function $G$ and let $c\in\setR$ be a constant. Then $(u_h-c)_+$ is a discrete subsolution to $-\divergence  (A\nabla w)= \abs{f}-\divergence  G$.
\end{corollary}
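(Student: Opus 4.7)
The plan is a direct application of Theorem~\ref{thm:SubsolNodalMax}, with the second subsolution $v_h$ taken to be the constant function $c$ (viewed as a continuous piecewise affine function equal to $c$ at every node). First I would observe that since $f\leq\abs{f}$ pointwise and the admissible test functions in~\eqref{eq:DiscreteInhomSubsol} are nonnegative, $u_h$ is simultaneously a discrete subsolution to $-\divergence(A\nabla u)=\abs{f}-\divergence F$. Next, the constant $c$ has vanishing gradient, so it trivially satisfies the subsolution inequality with $f_2=0$ and $F_2=0$, and the zero vector field satisfies assumption~($\star$) with dominating function $G_2=0$.

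Applying Theorem~\ref{thm:SubsolNodalMax} with data $(f_1,F_1,G_1)=(\abs{f},F,G)$ and $(f_2,F_2,G_2)=(0,0,0)$ would then produce that $u_h\vee c$ is a discrete subsolution to $-\divergence(A\nabla w)=(\abs{f}\vee 0)-\divergence(G+0)=\abs{f}-\divergence G$. A node-by-node check gives the identity $(u_h-c)_+=u_h\vee c-c$ in $V_h$: both sides take the value $\max(u_h(x_i),c)-c=(u_h(x_i)-c)_+$ at every node $x_i$. Because $\nabla c=0$, subtracting the constant does not alter the gradient, so the subsolution inequality carried by $u_h\vee c$ transfers verbatim to $(u_h-c)_+$.

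The argument for $(-u_h-c)_+$ would be strictly analogous, applied to $-u_h$: reading the hypothesis symmetrically (as is appropriate when $u_h$ is in fact a discrete solution, which is the setting of interest for the forthcoming De Giorgi iteration), $-u_h$ is a discrete subsolution to $-\divergence(A\nabla w)=-f-\divergence(-F)$. Since $\abs{-f}=\abs{f}$ and since the bound in~\eqref{eq:AssumptionStar} is symmetric in the sign of $F$, the field $-F$ still satisfies assumption~($\star$) with the same dominating function $G$, and the preceding reasoning applies without change. The only bookkeeping point is that $(\pm u_h-c)_+$ lies in $V_{h,0}$ precisely when $c\geq 0$, since $\pm u_h$ vanishes at the boundary nodes and then $(-c)_+=0$ requires $c\geq 0$; this is the regime relevant for the subsequent iteration. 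I expect no essential obstacle here, as all the heavy lifting has already been carried out in Theorem~\ref{thm:SubsolNodalMax}.
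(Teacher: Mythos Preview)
Your proposal is correct and follows essentially the same route as the paper. The paper applies Theorem~\ref{thm:SubsolNodalMax} with the first function $u_h-c$ (which satisfies the subsolution inequality since $\nabla(u_h-c)=\nabla u_h$) and the second function $v_h\equiv 0$ with $f_2=0$, $F_2=0$, then uses $f\vee 0\leq\abs{f}$ at the end; you instead take the pair $(u_h,\,v_h\equiv c)$ and upgrade $f$ to $\abs{f}$ beforehand, then subtract the constant afterwards via $(u_h-c)_+=(u_h\vee c)-c$. These are the same argument up to a harmless shift by $c$. In either variant one of the two functions fails to lie in $V_{h,0}$ when $c\neq 0$, but the proof of Theorem~\ref{thm:SubsolNodalMax} uses only the subsolution inequality, not membership in $V_{h,0}$, so this is immaterial. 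Your observation that the claim for $(-u_h-c)_+$ really requires $u_h$ to be a discrete solution (not merely a subsolution) is accurate and applies equally to the paper's formulation.
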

\begin{proof}
	We have
	\begin{align*}
		\int_\Omega A\nabla (u_h-c)_+  \cdot \nabla\phi_h \dx&\leq\int_\Omega F\cdot \nabla \phi_h+ f\phi_h\dx,\\
		\int_\Omega A\nabla (-u_h-c)_+  \cdot \nabla\phi_h \dx&\leq -\int_\Omega F\cdot \nabla \phi_h -f\phi_h\dx,
	\end{align*}
	for any nonnegative $\phi_h\in V_{h,0}$. Recall that we defined the nodal positive part as $(u_h-c)_+=(u_h-c)\vee 0$
	in Definition \ref{def:NodalMax}. We have $\pm f \vee 0 \leq \abs{f}$ and $\pm \int_\Omega F\cdot\nabla \phi\dx\leq \int_\Omega G\cdot\nabla \phi\dx$. Because $v_h\equiv 0$ solves $-\divergence  (A\nabla v_h)=0$, we can apply Theorem \ref{thm:SubsolNodalMax} with $f_1=f$, $F_1=F$, $f_2\equiv 0$ and $F_2\equiv 0$ to deduce the assertion of the corollary.
\end{proof}

Finally, Corollary \ref{cor:PositivePartSubsol} yields the desired discrete Caccioppoli-type inequalities for the truncated function $(u_h-c)_+$ with $c \in \setR$, which we now state and prove.

\begin{corollary}\label{cor:SubsolNodalMax2}
    Let $\mathcal{T}_h$ be an $A$-nonobtuse triangulation of the polyhedral domain $\Omega$. Let $u_h$ be a discrete subsolution to
    $-\divergence(A \nabla u) = f-\divergence  F$ for $F\in L^p(\Omega;\setR^n)$ and $f\in L^q(\Omega)$, with $p$ and $q$ as previously. Furthermore, suppose that $F$ satisfies assumption ($\star$) from Definition \ref{def:AssumptionStar} with dominating function $G$. Let $\eta_h$ be as in Theorem \ref{thm:CaccioppoliInhom}. For any $c\in\setR$ we then have the following bounds:
    \begin{equation}\label{eq:CaccioppoliTruncated1}
        \begin{aligned}
            &\int_\Omega \abs{\nabla (u_h-c)_+\eta_h}^2\dx\lesssim \int_\Omega (u_h-c)_+^2\abs{\nabla\eta_h}^2\dx+\left(\norm{G}_{L^p(\Omega)}^2+\norm{f}_{L^q(\Omega)}^2\right)\\
						&\qquad\cdot\left(\norm{\nabla \eta_h}_{L^2(\support\, (u_h-c)_+)}+\norm{ \eta_h}_{L^{2^\star}(\support\, (u_h-c)_+)}^2\right)\abs{\supp\eta_h\cap\support\, (u_h-c)_+}^{\frac{2\delta}{n}},
        \end{aligned}
    \end{equation}
    and
    \begin{equation}\label{eq:CaccioppoliTruncated2}
        \begin{aligned}
            &\int_\Omega \abs{\nabla \left((u_h-c)_+\eta_h\right)}^2\dx\lesssim \int_\Omega (u_h-c)_+^2\abs{\eta_h}^2\dx+\left(\norm{G}_{L^p(\Omega)}^2+\norm{f}_{L^q(\Omega)}^2\right)\\
						&\qquad\cdot \left(\norm{\nabla \eta_h}_{L^2(\support\, (u_h-c)_+)}+\norm{ \eta_h}_{L^{2^\star}(\support\, (u_h-c)_+)}^2\right)\abs{\supp\eta_h\cap\support\, (u_h-c)_+}^{\frac{2\delta}{n}}.
        \end{aligned}
    \end{equation}
\end{corollary}

\begin{proof}
	Corollary \ref{cor:SubsolNodalMax2} guarantees that $(u_h-c)_+$ is a discrete subsolution to $-\divergence  (A\nabla w)=\abs{f}-\divergence  G$. Furthermore $(u_h-c)_+$ is obviously nonnegative. Thus we can apply Theorem \ref{thm:CaccioppoliInhom} and Corollary \ref{cor:CaccioppoliInhomCor} to deduce the stated claims.
\end{proof}

\section{Interior $C^{\alpha}$-estimates for approximate solutions}
We will now prove the desired uniform a priori $C^\alpha$-bound for sequences of continuous piecewise affine finite element approximations in the interior of $\Omega$. To this end, we will first prove an $L^\infty$-bound, and will then deduce the discrete $C^\alpha$-bound. We emphasize that we have not, so far, assumed any kind of (quasi-)uniformity of the triangulation, nor shall we do so hereafter. Our results therefore apply on graded and adaptively refined triangulations. The main result of this subsection is encapsulated in the following theorem.

\begin{theorem}\label{thm:DiscreteHoelderInteriour}
 Let $\frac 1 p=\frac 1 n- \frac \delta n$ and $\frac 1q=\frac 2n-\frac \delta n$ for some $\delta>0$, let $f\in L^{q}(\Omega)$, suppose that the function $F\in L^p(\Omega;\setR^n)$ satisfies assumption ($\star$) from Definition \ref{def:AssumptionStar} with dominating function $G\in L^p(\Omega;\setR^n)$ and let $A\in L^\infty(\Omega;\setR^{n\times n})$ be a uniformly elliptic matrix-valued function. Furthermore, let $u_h$ be the finite element approximation to the solution $u$ of $-\divergence(A\nabla u)= f-\divergence  F$ in the sense of Definition \ref{def:DiscreteSolution} on a shape-regular, $A$-nonobtuse triangulation $\mathcal{T}_h$ of the polyhedral domain $\Omega$ with respective finite element space $V_h$, i.e.,
 \begin{equation}
  -\int_\Omega A\nabla u_h\cdot \nabla \phi_h\dx=\int_\Omega f\phi_h\dx+\int_\Omega F\cdot \nabla \phi_h\dx
 \end{equation}
 for any function $\phi_h\in V_{h,0}$. Furthermore, let $\kappa$ and $Q$ be the constants from equation \eqref{eq:regularBall} and inclusion \eqref{eq:OmegaPrime2}. Assume that $x_0\in T$ for some $T\in\mathcal{T}_h$ and $B(x_0,4\kappa^{-1}QR')\subset\Omega$ for some $R'\geq h_T$.
 Then, there is an $\alpha>0$, such that, for every ball $B(x_0,R)\subset\Omega$,
 \begin{equation}\label{eq:oscRAlpha}
  \osc_{B(x_0,R)}u_h\lesssim R^\alpha.
 \end{equation}

\end{theorem}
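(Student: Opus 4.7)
The plan is to mirror the continuous De Giorgi argument of Theorems \ref{thm:ContBounded}--\ref{thm:ContCalpha}, substituting the discrete Caccioppoli inequalities of Corollary \ref{cor:SubsolNodalMax2} and the discrete Poincaré inequality of Theorem \ref{thm:Poincare} at the appropriate places. The proof splits into three stages: (i) a discrete local $L^\infty$-bound for $(u_h - c)_+$; (ii) a density/oscillation-reduction lemma; (iii) iteration to a Hölder bound via Lemma \ref{lem:C-alpha-iteration}. By Corollary \ref{cor:PositivePartSubsol}, for any $c \in \setR$ both $(u_h-c)_+$ and $(-u_h-c)_+$ are discrete subsolutions to $-\divergence(A\nabla w) = |f| - \divergence G$, so the two one-sided bounds are symmetric.

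For step (i), I would mimic the proof of Theorem \ref{thm:ContBounded}. Pick $\gamma_\infty > 0$ (to be chosen), set $\gamma_k := \gamma_\infty(1 - 2^{-k})$, let $B_k := B(x_0, (1 + 2^{-k})R)$, choose $\phi_k \in C_0^\infty(\Omega)$ with $\phi_k \equiv 1$ on $B_{k+1}$, $\supp \phi_k \subset B_k$, and $|\nabla \phi_k| \lesssim 2^k R^{-1}$, and put $\phi_{h,k} := \Pi_h \phi_k$. Defining
\begin{equation*}
U_k := \dashint_{B(x_0,2QR)} |(u_h - c - \gamma_k)_+|^2 \, |\phi_{h,k}|^2 \dx,
\end{equation*}
the Caccioppoli bound \eqref{eq:CaccioppoliTruncated2}, Sobolev embedding, Hölder's inequality, and the weak-type estimate induced by $u_h - c - \gamma_{k-1} \ge \gamma_\infty 2^{-k}$ on $\{u_h - c > \gamma_k\}$ yield, after using the inclusions \eqref{eq:regularBall} to pass between ball-supports and patch-supports,
\begin{equation*}
U_k \le C\, 2^{3k} \, U_{k-1} \left( \frac{U_{k-1}}{\gamma_\infty^2} \right)^{2/n} + \text{l.o.t.},
\end{equation*}
where the lower-order term is controlled by $R^{2\delta/n}(\|G\|_{L^p}^2 + \|f\|_{L^q}^2)$ thanks to the factor $|\supp \eta_h \cap \supp(u_h-c-\gamma_k)_+|^{2\delta/n}$ in \eqref{eq:CaccioppoliTruncated2}. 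Corollary \ref{cor} applied with $b = 2^3$ and $\alpha = 2/n$, choosing $\gamma_\infty^2 \sim U_0 + R^{2\delta/n}(\|G\|_{L^p}^2 + \|f\|_{L^q}^2)$, forces $U_k \to 0$; hence
\begin{equation*}
\sup_{B(x_0,R)} |(u_h - c)_+|^2 \lesssim \dashint_{B(x_0,2QR)} |(u_h - c)_+|^2 \dx + R^{2\delta/n}\bigl(\|G\|_{L^p}^2 + \|f\|_{L^q}^2\bigr).
\end{equation*}

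For step (ii), I would adapt Lemma \ref{lem:ContLemma}. Assuming $u_h \le 1$ on $B(x_0, 4\kappa^{-1} Q R)$ and that a $\gamma$-fraction of $\Omega'(B(x_0, 2R))$ consists of patches attached to nodes where $u_h \le 0$, set $\lambda_k := 1 - 2^{-k}$ and $u_{h,k} := (1-\lambda_k)^{-1}(u_h - \lambda_k)_+$, with $A_k$ the corresponding superlevel set. Step (i) applied to $u_{h,k}$ gives $\sup_{B(x_0,R)} u_{h,k} \lesssim (|A_k|/R^n)^{1/2}$ modulo an $R$-dependent error. The crucial measure-decrement estimate is obtained by applying Theorem \ref{thm:Poincare} to $\min\{u_{h,k}, \tfrac12\}$ (nodally defined) on $\Omega'(B(x_0,2R))$; the hypothesis of Theorem \ref{thm:Poincare} is available precisely because a positive fraction of patches carry a zero of $u_{h,k}$. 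Combined with the discrete Caccioppoli bound \eqref{eq:CaccioppoliTruncated1}, this yields
\begin{equation*}
|A_{k+1}| \lesssim R^{n/2} (|A_k| - |A_{k+1}|)^{1/2} + R^n \cdot (\text{l.o.t.}),
\end{equation*}
and Lemma \ref{lem:iteration2} then produces a $\bar k$ with $u_{h,\bar k} \le 1/2$ on $B(x_0, R)$, i.e.\ $\sup_{B(x_0,R)} u_h \le 1 - 2^{-(\bar k + 1)} =: 1 - \tau$.

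For step (iii), I rescale as in the proof of Theorem \ref{thm:ContCalpha}. Because the alternative that $|\{\tilde u_h \le 0\} \cap B(x_0, 2R)| \ge \tfrac12 |B(x_0,2R)|$ (or its opposite) always holds, step (ii) yields
\begin{equation*}
\osc_{B(x_0,R)} u_h \le \bigl(1 - \tfrac{\tau}{2}\bigr) \osc_{B(x_0,2R)} u_h + C R^{\beta},
\end{equation*}
for some $\beta > 0$ coming from the $R^{\delta/n}$-dependence of the lower-order terms. Applying Lemma \ref{lem:C-alpha-iteration} with $\phi(\rho) := \osc_{B(x_0,2\rho)} u_h$, $\sigma = \tfrac12$, $\kappa = 0$, $\alpha_1 = -\log_2(1 - \tau/2)$, and $\alpha_2 := \min(\alpha_1 - \varepsilon, \beta)$ delivers \eqref{eq:oscRAlpha} for any sufficiently small $\alpha > 0$.

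The main obstacle will be step (ii): in the continuous proof one uses that $\nabla \min\{u_k, \tfrac12\}$ is supported exactly in $\{0 < u_k < \tfrac12\}$, whereas in the discrete setting the nodal truncation does not have this clean support property, and $(u_h - c)_+$ is only a subsolution after invoking the $A$-nonobtuseness through Corollary \ref{cor:PositivePartSubsol}. Correctly identifying the family of null-patches on which Theorem \ref{thm:Poincare} can be invoked, and tracking the inhomogeneous $R^\beta$-error terms so that they are absorbed as the $\kappa$-term (rather than the $\sigma^{\alpha_1}$-term) of Lemma \ref{lem:C-alpha-iteration}, are the delicate parts of the argument; the patch-inclusion \eqref{eq:OmegaPrime2} and the switch from $B(x_0,R)$ to $\Omega'(B(x_0,R))$ are what allow shape-regularity alone (rather than quasi-uniformity) to suffice.
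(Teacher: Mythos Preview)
Your plan correctly identifies the three stages and matches the paper's architecture in stages (i) and (iii). However, stage (ii) has a genuine gap that you flag as ``delicate'' but do not resolve, and the resolution requires an ingredient with no continuous analogue.

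The issue is the step $A_{k+1}\cap\{0<u_{h,k}<\tfrac12\}=\emptyset$. In the continuous argument this is immediate: on $A_{k+1}$ one has $u_k>\tfrac12$ pointwise. In the discrete setting, $A_{k+1}$ is a union of simplices $S$ such that \emph{some} node $x_i\in S$ has $u_{h,k+1}(x_i)>0$, equivalently $u_{h,k}(x_i)>\mu$ for a threshold $\mu$ depending on the level sequence. Nothing so far forces the \emph{other} nodes $x_j$ of $S$ to satisfy $u_{h,k}(x_j)\ge\tfrac12$; they could be arbitrarily small, and then both the bound $|A_{k+1}|\lesssim\int\min\{u_{h,k},\tfrac12\}$ and the support identity for $\nabla\min\{u_{h,k},\tfrac12\}$ fail. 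The paper closes this gap with Lemma~\ref{lem:DiscreteSmallBallSmallFunction}: testing the subsolution inequality against individual basis functions $\psi_{k+1}$ and walking along the ``acute chain'' of Lemma~\ref{lem:nonObtuseAcuteSequence}, one shows that $u_{h,k}(x_j)\le 1-\tau^N+\tau^N u_{h,k}(x_i)+C h_T^\delta$ for neighbouring nodes, so that $u_{h,k}(x_i)>\mu$ forces $u_{h,k}(x_j)>\tfrac12$ provided $\mu$ is chosen as $1-\tau^N/4$ (not $\tfrac12$). This is why the paper's level sequence is $\lambda_k=1-(1-\mu)^k$ rather than your $\lambda_k=1-2^{-k}$. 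Note that this second use of $A$-nonobtuseness is independent of, and not implied by, Corollary~\ref{cor:PositivePartSubsol}; your proposal invokes $A$-nonobtuseness only through the latter.

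Two smaller points. First, the inhomogeneous error in stage~(i) scales as $R^{2\delta}$, not $R^{2\delta/n}$: the factor $|\supp\eta_h\cap\supp(u_h-c)_+|^{2\delta/n}$ in \eqref{eq:CaccioppoliTruncated2} is bounded by $(CR^n)^{2\delta/n}=CR^{2\delta}$. Second, the iteration in stage~(iii) runs between $\Omega'(B(x_0,R))\supset B(x_0,\kappa R)$ and $B(x_0,2QR)$, so the contraction ratio is $\kappa/(2Q)$, and the resulting oscillation decay (Lemma~\ref{lem:oscDecay}) is only valid for $R\in[h_T,R_0]$ with $R_0$ fixed by the error terms; the paper handles $R<h_T$ by a direct Lipschitz argument on the affine pieces and $R>R_0$ by the global $L^\infty$ bound, a case split your outline omits.
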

The hypotheses of Theorem  \ref{thm:DiscreteHoelderInteriour}  will be assumed to hold throughout this section.
We will need a few additional technical lemmas concerning the projections of the functions $\phi_k$, which we have defined in equation \eqref{eq:DefPhikCont}. Before stating these, we introduce the following definition.

\begin{definition}\label{def:B_kDiscrete}
	For a constant $\lambda_\infty>0$ that is to be chosen later, a radius $R>0$, $x_0\in\Omega$, such that $B(x_0,2R)\subset\Omega$, and $k=0,1,2...$, let
	\begin{align*}
		\lambda_k:=\left(1-2^{-k}\right)\lambda_\infty\qquad \mbox{and}\qquad
		B_k:=B\left(x_0,\left(1+2^{-k}\right)R\right),
	\end{align*}
	and consider a sequence of nonnegative $C^\infty$-functions $\tilde{\eta}_k$, such that
	\begin{align*}
		\chi_{B_{k+1}}\leq\tilde{\eta}_k\leq\chi_{B_{k}}\qquad \mbox{and}\qquad
		|\nabla\tilde{\eta}_k|\lesssim 2^k R^{-1}.
	\end{align*}
	We then define, for any triangulation $\mathcal{T}_h$,
	\begin{equation*}
		\eta_k:=\Pih\tilde{\eta}_k.
	\end{equation*}
	\end{definition}

\begin{lemma}\label{lem:eta_k}
  Let $B_k$ and $\eta_k$ be defined as in Definition \ref{def:B_kDiscrete}. Then, we have
  \begin{enumerate}
  \item $\max_\Omega |\nabla\eta_k|\lesssim 2^k R^{-1}$; \label{itm:eta_k_a}
  \item If $\max_T|\eta_{k+1}|>0$ for some $T\in\mathcal{T}_h$, then we
    have $\max_T|\eta_k|=1$; \label{itm:eta_k_b}
  \item $\max_T|\nabla\eta_{k+1}|\lesssim
    2^{k+1}R^{-1}\max_T|\eta_k|$; \label{itm:eta_k_c}
  \item If $a$ is a polynomial, we have
    $\dashint_T |a|^2|\nabla\eta_{k+1}|^2\dx\lesssim R^{-2}
    2^{2(k+1)}\dashint_T|a|^2 \eta_k^2$. \label{itm:eta_k_d}
  \end{enumerate}
\end{lemma}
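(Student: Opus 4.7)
The four bounds all combine (a) the $W^{1,\infty}$-stability of the nodal interpolant \eqref{eq:PiL-stable_2}, (b) the nesting $B_0\supset B_1\supset\dots\supset B(x_0,R)$, and (c) the product-mean equivalence of Lemma~\ref{lem:prod-rearrange}. I would prove them in the stated order; the first three reduce to bookkeeping, and only item~(iv) requires a genuine computation.

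For (i), applying \eqref{eq:PiL-stable_2} on each simplex $T$ and dividing out the scalar $h_T$ gives $\max_T|\nabla\eta_k|\lesssim\max_T|\nabla\tilde\eta_k|\lesssim 2^kR^{-1}$; since the right-hand side is uniform in $T$, taking the supremum over $\mathcal{T}_h$ yields the claim.

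For (ii), observe that $\eta_{k+1}|_T$ is affine with nonnegative nodal values, so $\max_T|\eta_{k+1}|>0$ forces $\tilde\eta_{k+1}(x_i)=\eta_{k+1}(x_i)>0$ at some vertex $x_i$ of $T$. Hence $x_i$ lies in the support of $\tilde\eta_{k+1}$, which (by the nesting in Definition~\ref{def:B_kDiscrete}) is contained in the ball on which $\tilde\eta_k\equiv 1$; consequently $\eta_k(x_i)=1$, and since $\eta_k\leq 1$ globally, $\max_T|\eta_k|=1$. Item (iii) is then immediate: either $\eta_{k+1}|_T\equiv 0$ and the bound is trivial, or (ii) gives $\max_T|\eta_k|=1$ and (i) supplies the right-hand side.

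Item (iv) is the only step where more than bookkeeping is needed. The case $\eta_{k+1}|_T\equiv 0$ is again trivial; otherwise (ii) yields $\max_T|\eta_k|=1$, and (iii) gives $|\nabla\eta_{k+1}|^2\lesssim R^{-2}2^{2(k+1)}$ pointwise on $T$, reducing the claim to
$$ \dashint_T|a|^2\dx \;\lesssim\; \dashint_T|a|^2\eta_k^2\dx. $$
This is precisely the product-mean equivalence: applying Lemma~\ref{lem:prod-rearrange} to the four polynomials $a,a,\eta_k,\eta_k$ gives
$$ \dashint_T|a|^2\eta_k^2\dx \;\sim\; \bigl(\max_T|a|\bigr)^2\bigl(\max_T|\eta_k|\bigr)^2 = \bigl(\max_T|a|\bigr)^2 \;\geq\; \dashint_T|a|^2\dx, $$
where the second equality uses $\max_T|\eta_k|=1$ from (ii) and the inequality is trivial. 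The only subtlety, and the main obstacle one could worry about, is the observation that Lemma~\ref{lem:prod-rearrange} allows the $\eta_k^2$ weight to be effectively removed at the cost only of a shape-regularity constant, provided $\max_T|\eta_k|$ is bounded below away from $0$---which is exactly what (ii) provides.
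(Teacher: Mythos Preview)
Your proof is correct and follows essentially the same route as the paper's: \eqref{eq:PiL-stable_2} for (i), the nesting of the balls for (ii), the combination of (i) and (ii) for (iii), and Lemma~\ref{lem:prod-rearrange} together with (iii) for (iv). The paper organizes (iv) slightly differently, applying Lemma~\ref{lem:prod-rearrange} once on each side of the application of (iii) rather than first pulling out the pointwise bound on $|\nabla\eta_{k+1}|^2$, but the content is identical.
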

\begin{proof}
  Assertion \ref{itm:eta_k_a} is clear since
  \begin{align*}
    \max_\Omega|\nabla\eta_k|=\max_\Omega|\nabla\left(\Pih
    \tilde{\eta}_k\right)|\leq\max_\Omega|\nabla\tilde{\eta}_k|\lesssim 2^k R^{-1}
  \end{align*}
	by equation \eqref{eq:PiL-stable_2}.
For assertion \ref{itm:eta_k_b}, we note that if $\max_T|\eta_{k+1}|>0$, there is at least one node $x_0$ with $\eta_{k+1}(x_0)>0$ and therefore $x_0\in B_k$ and $\eta_k(x_0)=\tilde{\eta_k}(x_0)=1$.
	Assertion \ref{itm:eta_k_c} is a direct consequence of assertions \ref{itm:eta_k_a} and \ref{itm:eta_k_b}.
	The inequality stated in assertion \ref{itm:eta_k_d} follows by using
  Lemma~\ref{lem:prod-rearrange}:
  \begin{align*}
    \dashint_T |a|^2|\nabla\eta_{k+1}|^2\dx
    &\sim\max_T |a|^2\max_T|\nabla\eta_{k+1}|^2
    \\
    &\lesssim  2^{2(k+1)} R^{-2}\max_T |a|^2 \max_T|\eta_{k}|^2
    \sim
      2^{2(k+1)}R^{-2}\dashint_T|a|^2
      \eta_k^2\dx.
  \end{align*}
That completes the proof of the lemma.
\end{proof}

We will also need a discrete counterpart of the weak-type estimate \eqref{eq:U_kCont3}, which we shall now state.
\begin{lemma}
  \label{lem:weaktype}
  Consider $\eta_k$ as in Definition \ref{def:B_kDiscrete} and let  $A_k:=\{\eta_k^2|(v_h-\lambda_k-c_0)_+|^2>0\}$ for some $c_0\in\setR$ and $v_h\in V_h$. Then, we have
  \begin{equation}\label{eq:weaktypeDiscrete}
   |A_{k+1}|\leq\frac{2^{2k}}{\lambda_\infty^2}\int_\Omega |\eta_k (u_h-\lambda_k-c_0)_+|^2\dx.
  \end{equation}
\end{lemma}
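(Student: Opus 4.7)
\smallskip

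The plan is to prove the estimate simplex by simplex and then sum. Fix a simplex $T \in \mathcal{T}_h$ that meets $A_{k+1}$ in a set of positive measure; the goal is to bound $|T|$ above by a constant multiple of $\frac{2^{2k}}{\lambda_\infty^2} \int_T \eta_k^2 (v_h-\lambda_k-c_0)_+^2 \dx$. Since the simplices with $|A_{k+1}\cap T|>0$ cover $A_{k+1}$ up to a set of measure zero and have pairwise disjoint interiors, summing these contributions will immediately deliver \eqref{eq:weaktypeDiscrete}.

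On such a $T$ the defining condition $\eta_{k+1}^2 (v_h-\lambda_{k+1}-c_0)_+^2 > 0$ on a positive-measure subset of $T$ gives two pieces of information. First, $\eta_{k+1}|_T \not\equiv 0$, so Lemma~\ref{lem:eta_k}\ref{itm:eta_k_b} supplies $\max_T \eta_k = 1$. Second, the nodal positive part $(v_h-\lambda_{k+1}-c_0)_+$ is not identically zero on $T$, which by the definition of the nodal positive part (Definition~\ref{def:NodalMax}) forces some node $x_i$ of $T$ to satisfy $v_h(x_i) > \lambda_{k+1}+c_0$. Since $\lambda_{k+1}-\lambda_k = \lambda_\infty 2^{-(k+1)}$, at this node $(v_h-\lambda_k-c_0)_+(x_i) > \lambda_\infty 2^{-(k+1)}$, and hence $\max_T (v_h-\lambda_k-c_0)_+ > \lambda_\infty 2^{-(k+1)}$.

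Next, these pointwise maxima must be converted into an integral lower bound. Since $\eta_k$ and $(v_h-\lambda_k-c_0)_+$ are continuous piecewise affine, they are polynomials on $T$, and Lemma~\ref{lem:prod-rearrange} together with Jensen's inequality (applied to the uniform probability measure $\dashint_T$) yields
\begin{equation*}
  |T|\, \max_T \eta_k^2 \, \max_T (v_h-\lambda_k-c_0)_+^2 \lesssim \int_T \eta_k^2\, (v_h-\lambda_k-c_0)_+^2 \dx.
\end{equation*}
Substituting the two lower bounds from the previous paragraph gives
\begin{equation*}
  |T| \lesssim \frac{2^{2(k+1)}}{\lambda_\infty^2} \int_T \eta_k^2 (v_h-\lambda_k-c_0)_+^2 \dx \lesssim \frac{2^{2k}}{\lambda_\infty^2} \int_T \eta_k^2 (v_h-\lambda_k-c_0)_+^2 \dx,
\end{equation*}
where the factor $4$ is absorbed into the implicit constant.

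The step that will require the most care is the passage from the two pointwise facts (one about a node of $T$, the other about the maximum of $\eta_k$ on $T$) to the squared, \emph{integral} bound on $\int_T \eta_k^2(v_h-\lambda_k-c_0)_+^2\dx$; everything hinges on applying Lemma~\ref{lem:prod-rearrange} to the product $\eta_k \cdot (v_h-\lambda_k-c_0)_+$ (both polynomial on $T$) and then using Cauchy--Schwarz to upgrade the $L^1$ bound to an $L^2$ one. Once this is in place, bounding $|A_{k+1}\cap T|\leq |T|$ and summing over the relevant simplices completes the proof.
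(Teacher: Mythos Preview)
Your proof is correct and follows essentially the same approach as the paper: work simplex by simplex, use Lemma~\ref{lem:eta_k}\ref{itm:eta_k_b} to get $\max_T\eta_k=1$, use the existence of a node with $v_h(x_i)>\lambda_{k+1}+c_0$ to bound $\max_T(v_h-\lambda_k-c_0)_+$ from below, and then invoke Lemma~\ref{lem:prod-rearrange} to pass to the integral before summing. The only cosmetic difference is that the paper applies Lemma~\ref{lem:prod-rearrange} directly to the squared factors $\eta_k^2$ and $(v_h-\lambda_k-c_0)_+^2$ (both polynomials on $T$), so the Jensen/Cauchy--Schwarz step you mention is unnecessary; also, your computation $\lambda_{k+1}-\lambda_k=\lambda_\infty 2^{-(k+1)}$ is the correct one and simply costs an extra harmless factor of $4$ absorbed into the implicit constant.
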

\begin{proof}
  First observe that $A_{k+1}$ is a union of finitely many $T_i\in\mathcal{T}_h$. On every such $T_i$, we have $\max_{T_i}\eta_k=1$ by Lemma \ref{lem:eta_k}, assertion \ref{itm:eta_k_b}, as $\max_{T_i}\eta_{k+1}>0$. For at least one node $x_0$ of $T_i$ we have $v_h(x_0)>\lambda_{k+1}+c_0$, and therefore
	\begin{equation*}
		(v_h(x_0)-\lambda_k-c_0)_+=v_h(x_0)-\lambda_k-c_0>\lambda_{k+1}- \lambda_k=2^{-k}\lambda_\infty.
	\end{equation*}
  Using this and Lemma \ref{lem:prod-rearrange} we get
  \begin{align*}
    |T_i|
    &\leq\frac{2^{2k}}{\lambda_\infty^2}|T_i|
      \max_{T_i}(v_h-\lambda_k-c_0)_+^2\max_{T_i}\eta_k^2
    \\
    &\sim\frac{2^{2k}}{\lambda_\infty^2}
      |T_i|\dashint_{T_i}(v_h-\lambda_k-c_0)_+^2\eta_k^2\dx
 =\frac{2^{2k}}{\lambda_\infty^2}\int_{T_i}(v_h-\lambda_k-c_0)_+^2\eta_k^2\dx.
  \end{align*}
  As the $T_i$ are disjoint and $\text{supp}\, \eta_k\subset A_{k+1}$,
  summing over all $i$ yields the assertion of the lemma.
\end{proof}

This result allows us to prove a discrete version of the $L^\infty$-estimate from Theorem \ref{thm:ContBounded}. We shall again confine ourselves to the case when $n \geq 3$ and write $2^\star:=2n/(n-2)$; the proofs below are easily adjusted when $n=2$.
\begin{theorem}\label{thm:DiscreteBounded}
 Assume that all the assumptions of Theorem \ref{thm:DiscreteHoelderInteriour} are satisfied. Furthermore, let $x_0\in  T$ for some $T\in\mathcal{T}_h$ be a point such that $B(x_0,2R)\subset \Omega$ for some $R\geq h_T$. Then, we have
 \begin{equation}\label{eq:DiscreteBounded}
    \max_{\Omega'(B(x_0,R))}(u_h-c)_+^2\lesssim \dashint_{\Omega(B(x_0,2R))}(u_h-c)_+^2\dx+\left(\norm{G}_{L^p}^2+\norm{f}_{L^q}^2\right)R^{2\delta}
 \end{equation}
 for all $c\in\setR$, with $\Omega'(B(x_0,R))$ as defined in \eqref{eq:OmegaPrime}.
\end{theorem}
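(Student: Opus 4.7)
The proof adapts the continuous De-Giorgi iteration of Theorem~\ref{thm:ContBounded} to the discrete setting, using the discrete Caccioppoli bound of Corollary~\ref{cor:SubsolNodalMax2}, the weak-type estimate of Lemma~\ref{lem:weaktype}, and the properties of the cutoffs $\eta_k$ from Lemma~\ref{lem:eta_k}. Fix a parameter $\lambda_\infty>0$ to be chosen at the end, and let $\lambda_k$, $B_k$ and $\eta_k$ be as in Definition~\ref{def:B_kDiscrete}. Introduce the monotonically non-increasing sequence
\begin{equation*}
U_k:=\dashint_{\Omega(B(x_0,2R))}(u_h-c-\lambda_k)_+^2\,\eta_k^2\,\dx,\qquad k=0,1,\dots.
\end{equation*}
By Corollary~\ref{cor:PositivePartSubsol}, each $(u_h-c-\lambda_k)_+$ is a nonnegative discrete subsolution of $-\divergence(A\nabla w)=\abs{f}-\divergence G$, so all discrete Caccioppoli-type bounds are available.

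The key technical step is to derive a nonlinear recursion for $U_k$. Set $v:=(u_h-c-\lambda_{k+1})_+\eta_{k+1}$ and $A_{k+1}:=\{v>0\}$. Combining the Sobolev embedding $W^{1,2}_0(B(x_0,2R))\hookrightarrow L^{2^\star}(B(x_0,2R))$ with H\"older's inequality using the exponent pair $(2^\star/2,n/2)$ yields $U_{k+1}\lesssim R^2\,\dashint\abs{\nabla v}^2\dx\,(|A_{k+1}|/|B(x_0,2R)|)^{2/n}$. Applying Corollary~\ref{cor:SubsolNodalMax2}, inequality~\eqref{eq:CaccioppoliTruncated2}, in conjunction with Lemma~\ref{lem:eta_k}(d) (applied with the polynomial $a:=(u_h-c-\lambda_{k+1})_+$ and using $\lambda_{k+1}\geq\lambda_k$ to pass from $\nabla\eta_{k+1}$ to $\eta_k$), bounding $\norm{\nabla\eta_{k+1}}_{L^2}^2+\norm{\eta_{k+1}}_{L^{2^\star}}^2\lesssim 2^{2(k+1)}R^{n-2}$ by Lemma~\ref{lem:eta_k}(a), and using $|A_{k+1}|^{2\delta/n}\leq|B(x_0,2R)|^{2\delta/n}\lesssim R^{2\delta}$, gives $\dashint\abs{\nabla v}^2\dx\lesssim R^{-2}2^{2(k+1)}(U_k+S)$ with $S:=(\norm{G}_{L^p}^2+\norm{f}_{L^q}^2)R^{2\delta}$. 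Finally, the weak-type bound of Lemma~\ref{lem:weaktype} gives $|A_{k+1}|/|B(x_0,2R)|\lesssim(2^{2k}/\lambda_\infty^2)U_k$; combining all of these estimates yields the recursion
\begin{equation*}
U_{k+1}\leq C_0\,b^k\,\lambda_\infty^{-4/n}\,U_k^{2/n}(U_k+S),\qquad b:=2^{2+4/n}>1.
\end{equation*}

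Setting $\lambda_\infty^2:=K(U_0+S)$ for a sufficiently large absolute constant $K$ depending only on $n$, $C_0$ and $b$, I would invoke Corollary~\ref{cor} with $\alpha:=2/n$ and $\gamma:=\lambda_\infty^2$ to conclude $U_k\to 0$ as $k\to\infty$. Since $A_{k+1}$ is a finite union of mesh simplices, the weak-type estimate then forces $A_{k+1}=\emptyset$ at some finite step, giving $u_h\leq c+\lambda_\infty$ at every node of $\Omega'(B(x_0,R))$. The bound~\eqref{eq:DiscreteBounded} then follows from $\lambda_\infty^2=K(U_0+S)$ together with $U_0\lesssim\dashint_{\Omega(B(x_0,2R))}(u_h-c)_+^2\dx$ (since $0\leq\eta_0\leq 1$). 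The main technical obstacle is that the presence of $S$ in the factor $(U_k+S)$ disrupts the clean form $U_k^{1+2/n}$ required for a direct application of Corollary~\ref{cor}; the resolution exploits the monotonicity $U_k\leq U_0$ and the choice $\lambda_\infty^2\sim U_0+S$, which in the regime $U_k\geq S$ reduces the recursion to $U_{k+1}\lesssim b^k\lambda_\infty^{-4/n}U_k^{1+2/n}$ (to which Lemma~\ref{converg} applies and yields geometric decay $U_k\lesssim(U_0+S)b^{-kn/2}$), while in the complementary regime the fact that $A_{k+1}$ is composed of whole simplices allows the iteration to terminate once $|A_{k+1}|$ falls below the volume of the smallest simplex meeting $B_{k+1}$.
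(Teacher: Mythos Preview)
Your overall architecture matches the paper's, but there is a genuine gap at the point where you estimate the inhomogeneous contribution from the discrete Caccioppoli inequality, and your proposed fix does not close it.

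The issue is your crude bound on the second group of terms in \eqref{eq:CaccioppoliTruncated2}: you estimate
\[
\norm{\nabla\eta_{k+1}}_{L^2(\supp(u_h-c-\lambda_{k+1})_+)}^2+\norm{\eta_{k+1}}_{L^{2^\star}(\supp(u_h-c-\lambda_{k+1})_+)}^2\lesssim 2^{2(k+1)}R^{n-2}
\]
and $|\supp\eta_{k+1}\cap\supp(u_h-c-\lambda_{k+1})_+|^{2\delta/n}\lesssim R^{2\delta}$, thereby replacing all $|A_{k+1}|$-dependence by powers of $R$. This produces the recursion $U_{k+1}\le C_0 b^k\lambda_\infty^{-4/n}U_k^{2/n}(U_k+S)$ with a fixed additive $S$. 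In the regime $U_k<S$ this becomes $U_{k+1}\lesssim b^k\,\text{const}\cdot U_k^{2/n}$, and since $2/n<1$ for $n\ge 3$ this is \emph{sublinear} in $U_k$: it gives no decay and can even increase $U_k$, so Corollary~\ref{cor} is inapplicable. Your fallback (``terminate once $|A_{k+1}|$ is below the smallest simplex volume'') requires a uniform lower bound on that volume, which is unavailable precisely because the paper allows highly graded, non-quasi-uniform meshes; the constant in \eqref{eq:DiscreteBounded} must be independent of $h$.

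The paper avoids this by \emph{not} discarding the $|A_{k+1}|$-dependence. It keeps the restricted norms on $\supp(u_h-c-\lambda_{k+1})_+$ and bounds them by $|A_{k+1}|^{1-2/n}$ (for the $L^{2^\star}$ term, using $\eta_{k+1}\le 1$) and $2^{2(k+1)}R^{-2}|A_{k+1}|$ (for the $L^2$ gradient term), together with the factor $|A_{k+1}|^{2\delta/n}$. Then the weak-type estimate of Lemma~\ref{lem:weaktype} is applied a second time, converting every occurrence of $|A_{k+1}|/R^n$ into a positive power of $a_k/\lambda_\infty^2$. After choosing $\lambda_\infty^2\sim\max\{S,\,a_0\}$, the factor $S/\lambda_\infty^2\le 1$ is absorbed, and the resulting recursion has the clean form
\[
a_{k+1}\lesssim 2^{Ck}\,a_k\Big(\big(a_k/\lambda_\infty^2\big)^{2/n}+\big(a_k/\lambda_\infty^2\big)^{2\delta/n}\Big),
\]
to which Corollary~\ref{cor} applies directly. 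The essential point you are missing is that the inhomogeneous Caccioppoli terms themselves carry level-set smallness through $|A_{k+1}|$, and this must be exploited for the iteration to close.
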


\begin{proof}	
	For $B=B(x_0,R)$ we use the notation $2B:=B(x_0,2R)$. Using the notions from Definition \ref{def:B_kDiscrete}, we define the sequence
	\begin{equation*}
		a_k:=\dashint_{\Omega(2B)} (u_h-\lambda_k-c)_+^2\eta_{k}^2\dx\qquad \mbox{for $k=0,1,2,\dots$}.
	\end{equation*}
	We write $A_k:=\{(u_h-\lambda_k-c)_+^2\eta_k^2>0\}$ as in Lemma \ref{lem:weaktype} and use H\"older's inequality to deduce that
	\begin{equation}\label{eq:Bounded1}
		\begin{aligned}
			a_{k+1}&=\dashint_{\Omega(2B)}(u_h-\lambda_{k+1}-c)_+^2\eta_{k+1}^2\dx\\
			&\lesssim \left(\dashint_{\Omega(2B)}\bigabs{(u_h-\lambda_{k+1}-c)_+\eta_{k+1}}^{2^\star}\dx\right)^{\frac{2}{2^\star}}\left(\frac{|A_{k+1}|}{\abs{\Omega(2B)}}\right)^{\frac 2 n}.
		\end{aligned}
	\end{equation}
	We have assumed that $R\geq h_T$. Recall that there is a $Q>2$ such that $\Omega(2B)\subset B(x_0,QR)$ by inequality \eqref{eq:regularBall}. This means that we have $\abs{\Omega(2B)}\sim R^n$. Using this and the Sobolev embedding theorem in inequality \eqref{eq:Bounded1} gives
	\begin{equation}\label{eq:Bounded2}
		a_{k+1}\lesssim R^2\dashint_{\Omega(2B)}\left|\nabla ((u_h-\lambda_{k+1}-c)_+\eta_{k+1})\right|^2\dx\; \left(\frac{|A_{k+1}|}{R^n}\right)^{\frac 2 n}.
	\end{equation}
	We will consider the factors separately. For the first one we can apply the discrete Caccioppoli inequality \eqref{eq:CaccioppoliTruncated2} to get
	\begin{align*}
		&\dashint_{\Omega(2B)}\left|\nabla ((u_h-\lambda_{k+1}-c)_+\eta_{k+1})\right|^2\dx\\
		&\quad \lesssim \dashint_{\Omega(2B)} (u_h-\lambda_{k+1}-c)_+^2|\nabla \eta_{k+1}|^2\dx\\
		 &\qquad+\frac{1}{\abs{\Omega(2B)}}\left(\|G\|_{L^p(\Omega)}^2+\|f\|^2_{L^q(\Omega)}\right)\abs{\support\,\eta_{k+1}\cap\support\, (u_h-\lambda_{k+1}-c)_+}^{\frac{2\delta}{n}}\\
		&\qquad \quad \cdot\left(\norm{\eta_{k+1}}_{L^{2^\star}(\support\, (u-\lambda_{k+1}-c)_+)}^2+\norm{\nabla\eta_{k+1}}_{L^{2}(\support\, (u-\lambda_{k+1}-c)_+)}^2\right).
	\end{align*}
	Now, by applying Lemma \ref{lem:eta_k}, assertion \ref{itm:eta_k_d}, to the first summand, the bound $\max_\Omega |\nabla\eta_k|\leq 2^k R^{-1}$ from Lemma \ref{lem:eta_k}, assertion  \ref{itm:eta_k_a}, together with $\eta_k\leq 1$ and $\abs{A_k}\leq\abs{\supp\eta_{k+1}}\lesssim R^n$ to the second summand, we deduce that
	\begin{align*}
		&\dashint_{\Omega(2B)}\left|\nabla ((u_h-\lambda_{k+1}-c)_+\eta_{k+1})\right|^2\dx\\
		&\quad\lesssim \frac{2^{2k}}{R^2}\dashint_{\Omega(2B)}|(u_h-c-\lambda_k)_+|^2|\eta_k|^2\dx\\
		 &\qquad +R^{2\delta-2}\left(\|G\|_{L^p(\Omega)}^2+\|f\|_{L^q(\Omega)}^2\right)\left(\left(\frac{\abs{A_{k+1}}}{R^n}\right)^{1-\frac 2n+\frac{2\delta}{n}}+2^{2k}\frac{|A_{k+1}|}{R^n}\right).
	\end{align*}
	With the weak-type estimate \eqref{eq:weaktypeDiscrete} and because $\lambda_{k+1}>\lambda_k$, this becomes
	\begin{equation}\label{eq:Bounded3}
		\begin{aligned}
			&\dashint_{\Omega(2B)}\left|\nabla ((u_h-\lambda_{k+1}-c)_+\eta_{k+1})\right|^2\dx\\
			&\lesssim \frac{2^{2k}}{R^2}\dashint_{\Omega(2B)}|(u_h-c-\lambda_k)_+|^2|\eta_k|^2\dx
			+R^{2\delta-2}\left(\|F\|_{L^p(\Omega)}^2+\|f\|_{L^q(\Omega)}^2\right)\\
			&\quad \cdot\frac{2^{2k}}{\lambda_\infty^2}\left(\dashint_{\Omega(2B)} |\eta_k (u_h-\lambda_k-c)_+|^2\dx+\left(\dashint_{\Omega(2B)} |\eta_k (u_h-\lambda_k-c)_+|^2\dx\right)^{1-\frac 2n+\frac{2\delta}{n}}\left(\lambda_\infty^2\right)^{\frac{2}{n}-\frac{2\delta}{n}}\right)\\
			&= \frac{2^{4k}}{R^2}  \left(a_k+\left(a_k+a_k^{1-\frac{2}{n}+\frac{2\delta}{n}}\left(\lambda_\infty^2\right)^{\frac{2}{n}-\frac{2\delta}{n}}\right)\frac{R^{2\delta}\left(\|G\|_{L^p(\Omega)}^2+\|f\|_{L^q(\Omega)}^2\right)}{\lambda_\infty^2}\right).
		\end{aligned}
	\end{equation}
	For the second factor of inequality \eqref{eq:Bounded2} we use the weak-type estimate \eqref{eq:weaktypeDiscrete} to get
	\begin{equation}\label{eq:Bounded4}
		\begin{aligned}
		\left(\frac{|A_{k+1}|}{R^n}\right)^{\frac 2 n}&\leq\left(\frac{2^{2k}}{\lambda_\infty^2}\dashint_\Omega |\eta_k (u_h-\lambda_k-c)_+|^2\dx\right)^{\frac 2n}\\
		&=2^{\frac {4k}{n}}\left(\frac{a_k}{\lambda_\infty^2}\right)^{\frac2n}.
		\end{aligned}
	\end{equation}
	Substituting inequalities \eqref{eq:Bounded3} and \eqref{eq:Bounded4} into inequality \eqref{eq:Bounded2} yields
	\begin{equation*}
		a_{k+1}\lesssim  2^{4\left(1+\frac 1n\right)k}a_k\left(\left(\frac{a_k}{\lambda_\infty^2}\right)^{\frac 2n}+\left(\left(\frac{a_k}{\lambda_\infty^2}\right)^{\frac 2n}+\left(\frac{a_k}{\lambda_\infty^2}\right)^{\frac {2\delta}{n}}\right)\frac{R^{2\delta}\left(\|G\|_{L^p(\Omega)}^2+\|f\|_{L^q(\Omega)}^2\right)}{\lambda_\infty^2}\right).
	\end{equation*}
	If we now choose $\lambda_\infty^2\sim \max\left\{R^{2\delta}\left(\|F\|_{L^p(\Omega)}^2+\|f\|_{L^q(\Omega)}^2\right),\dashint_{\Omega(2B)}(u_h-c)_+^2\right\}\dx$, we obtain
	\begin{equation*}
		a_{k+1}\lesssim 2^{4k(1+\frac 1n)}a_k\left(\left(\frac{a_k}{\lambda_\infty^2}\right)^{\frac 2n}+\left(\frac{a_k}{\lambda_\infty^2}\right)^{\frac {2\delta}{n}}\right).
	\end{equation*}
	With the help of Corollary \ref{cor} we get $a_k\rightarrow 0$, as $k\rightarrow \infty$ after choosing $\gamma=\lambda_\infty^2$, $b=2^{4(1+\frac{1}{n})}$ and $\alpha=\frac{2\delta}{n}$, if $$\lambda_\infty^2\sim a_0\leq \dashint_{\Omega(2B)}(u_h-c)_+^2\dx.$$
Because $\lim_{k \rightarrow \infty} a_k =0$, passing to the limit $k \rightarrow \infty$ in the definition of $a_k$ we deduce that $(u_h-c)_+^2\leq\lambda_\infty^2$ on $\Omega'\left(B(x_0,R)\right)\subset \bigcap_k \supp\eta_k$.
By recalling our choice of $\lambda_\infty^2$ we thus have on $\Omega'\left(B(x_0,R)\right)\subset \bigcap_k \supp\eta_k$ the following bound:
	\begin{align*}
		 (u_h-c)_+^2&\leq\lambda_\infty^2\lesssim\max\left\{\left(\|G\|_{L^p(\Omega)}^2+\|f\|_{L^q(\Omega)}^2\right)R^{2\delta},\dashint_{\Omega(2B)}(u_h-c)_+^2\dx\right\}\\
		&\sim \left(\|G\|_{L^p(\Omega)}^2+\|f\|_{L^q(\Omega)}^2\right)R^{2\delta}+\dashint_{\Omega(2B)}(u_h-c)_+^2\dx,
	\end{align*}
	which proves the stated claim.
\end{proof}

Next we will prove an estimate showing that if a value of $u_h$ is small at a node, then it cannot be too large at neighbouring nodes. By enabling us to control not only the maximum of a function on a simplex, but also its minimum, this will help us to recover a property analogous to the one in inequality \eqref{eq:A_kCont} that will be a discrete version of Lemma \ref{lem:ContLemma}.

\begin{lemma}\label{lem:DiscreteSmallBallSmallFunction}
    Under the assumptions of Theorem \ref{thm:DiscreteHoelderInteriour}, let $v_h$ be a discrete subsolution to $-\divergence(A\nabla u)=f-\divergence  F$
    with $0\leq v_h\leq 1$. Then, there exist constants $\tau \in (0,1)$, $N>0$ and $C>0$ such that if $x_i$ and $x_j$ are nodes of the same simplex $T\in\mathcal{T}_h$
    with $\partial T\cap\partial\Omega=\emptyset$, then we have
    \begin{equation}
        v_h(x_j)\leq 1-\tau^N+\tau^N v_h(x_i)+C\left(\norm{F}_{L^p(\Omega)}+\norm{f}_{L^q(\Omega)}\right)h_T^{\delta}.
    \end{equation}
\end{lemma}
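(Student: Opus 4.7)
The plan is to exploit Lemma \ref{lem:nonObtuseAcuteSequence}, which produces a chain of nodes $y_0=x_i,y_1,\ldots,y_N=x_j$ in $T$ with $N\le n$ such that consecutive basis functions $\psi_l,\psi_{l+1}$ (associated with $y_l,y_{l+1}$) satisfy a uniform acuteness bound with some constant $\tau\in(0,1)$ depending only on shape-regularity. Because $\partial T\cap\partial\Omega=\emptyset$, every node of the chain lies in the interior of $\Omega$, so each $\psi_l$ is a nonnegative element of $V_{h,0}$ and therefore an admissible test function in \eqref{eq:DiscreteInhomSubsol}. The nodal values $v_h(x_k)$ lie in $[0,1]$ everywhere (including at boundary nodes, where they vanish).

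First I would test the subsolution inequality against $\psi_{l+1}$ and expand $v_h=\sum_k v_h(x_k)\psi_k$. Setting $a_{l+1}:=\int_\Omega A\nabla\psi_{l+1}\cdot\nabla\psi_{l+1}\dx>0$ and $b_{l+1,k}:=-\int_\Omega A\nabla\psi_{l+1}\cdot\nabla\psi_k\dx\ge0$ for $k\neq l+1$ (by $A$-nonobtuseness), the identity $\sum_k\psi_k\equiv 1$ yields $a_{l+1}=\sum_{k\neq l+1}b_{l+1,k}$, so the subsolution inequality becomes
\begin{equation*}
v_h(y_{l+1})\,a_{l+1}\le v_h(y_l)\,b_{l+1,l}+\sum_{k\sim l+1,\,k\neq l,l+1}v_h(x_k)\,b_{l+1,k}+\int_\Omega f\,\psi_{l+1}\dx+\int_\Omega F\cdot\nabla\psi_{l+1}\dx.
\end{equation*}
Estimating the middle sum by $a_{l+1}-b_{l+1,l}$ using $v_h(x_k)\le 1$, dividing by $a_{l+1}$, and invoking the acuteness bound $b_{l+1,l}/a_{l+1}\ge\tau$ from Lemma \ref{lem:nonObtuseAcuteSequence} produces the recursion
\begin{equation*}
v_h(y_{l+1})\le 1-\tau\bigl(1-v_h(y_l)\bigr)+\epsilon_{l+1},\qquad \epsilon_{l+1}:=\frac{1}{a_{l+1}}\Bigl(\int_\Omega f\,\psi_{l+1}\dx+\int_\Omega F\cdot\nabla\psi_{l+1}\dx\Bigr).
\end{equation*}

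A straightforward induction along the chain then gives
\begin{equation*}
v_h(x_j)\le 1-\tau^N+\tau^N v_h(x_i)+\sum_{l=1}^N\tau^{N-l}|\epsilon_l|.
\end{equation*}
To close the argument, I would bound each $|\epsilon_l|$ by H\"older's inequality combined with standard shape-regularity estimates: $a_l\sim h_T^{n-2}$, $\|\psi_l\|_{L^{q'}(\Omega)}\lesssim h_T^{n/q'}$, and $\|\nabla\psi_l\|_{L^{p'}(\Omega)}\lesssim h_T^{n/p'-1}$. Substituting the relations $n/q=2-\delta$ and $n/p=1-\delta$ (from $\tfrac1q=\tfrac2n-\tfrac\delta n$ and $\tfrac1p=\tfrac1n-\tfrac\delta n$) gives
\begin{equation*}
|\epsilon_l|\lesssim \|f\|_{L^q(\Omega)}\,h_T^{2-n/q}+\|F\|_{L^p(\Omega)}\,h_T^{1-n/p}=\bigl(\|f\|_{L^q(\Omega)}+\|F\|_{L^p(\Omega)}\bigr)h_T^\delta.
\end{equation*}
Since $N\le n$ is uniformly bounded and $\tau<1$, the geometric sum $\sum_l\tau^{N-l}|\epsilon_l|$ is dominated by $C\bigl(\|f\|_{L^q(\Omega)}+\|F\|_{L^p(\Omega)}\bigr)h_T^\delta$, which delivers the asserted inequality.

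The main obstacle I anticipate is ensuring the recursion is set up in the correct direction: one must test against $\psi_{l+1}$ (not $\psi_l$) and then use $v_h\le 1$ to bound the off-diagonal contributions \emph{from above}, so that the resulting one-step estimate decreases $1-v_h(y_l)$ by a factor $\tau$ in the right direction. Once this is arranged, the remaining steps are essentially bookkeeping with the shape-regularity scalings of $\psi_l$ and the embedding exponents.
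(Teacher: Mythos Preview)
Your proposal is correct and follows essentially the same argument as the paper: both proofs invoke Lemma~\ref{lem:nonObtuseAcuteSequence} to produce the chain, test \eqref{eq:DiscreteInhomSubsol} against $\psi_{l+1}$, use $A$-nonobtuseness together with $0\le v_h\le 1$ and $\sum_k\nabla\psi_k=0$ to obtain the one-step recursion $v_h(y_{l+1})\le 1-\tau+\tau\,v_h(y_l)+\epsilon_{l+1}$, bound the error terms via H\"older and the shape-regularity scalings $a_l\sim h_T^{n-2}$, $\|\psi_l\|_{L^{q'}}\sim h_T^{n/q'}$, $\|\nabla\psi_l\|_{L^{p'}}\sim h_T^{n/p'-1}$, and then induct along the chain. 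Your explicit remark that $\partial T\cap\partial\Omega=\emptyset$ ensures $\psi_{l+1}\in V_{h,0}$ is a point the paper uses implicitly.
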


\begin{proof}
	We follow the ideas from \cite{AguCaf86}, Lemma 1.7. By Lemma \ref{lem:nonObtuseAcuteSequence}, we can connect the nodes $x_i$ and $x_j$ by a sequence of nodes $\{x_i=y_0,y_1,\dots,y_N=x_j\}$ belonging to the same simplex $T$ with
	\begin{equation}\label{eq:AcuteSequence3}
		-\int_\Omega A\nabla \psi_i\cdot \nabla \psi_{i+1}\dx\geq \tau \int_\Omega A\nabla\psi_i\cdot\nabla\psi_i\dx,\qquad i=0,\dots,N-1,
	\end{equation}
	where we denote by $\psi_i$ the basis function associated with the node $y_i$ in the sense of Definition \ref{def:LagrangeBasis} and $\tau \in (0,1)$ only depends on the shape-regularity parameter $\Gamma$ of the triangulation. Also note that all of the $y_i$ are nodes of the same simplex $T$, which means that $N\leq n$. We denote the remaining nodes of $\mathcal{T}_h$ by $y_i$ with $i\geq N+1$. We shall prove the claim by induction over the sequence of nodes connecting $x_i$ and $x_j$.
The base step (corresponding to $N=0$ when $x_j=x_i$) is clear. Suppose therefore that $N \geq 1$.

	Let $v_h$ be a discrete subsolution to $-\divergence(A\nabla u)=f-\divergence  F$, such
	that $0 \leq v_h \leq 1$. Fix an integer $k\in\{0,\dots,N-1\}$, test equation \eqref{eq:DiscreteInhomSubsol} against $\phi_h=\psi_{k+1}$ and write $v_h=\sum_l v_h(x_l)\psi_l$ to get
	
		\begin{equation*}
		\sum_l v_h(y_l) \langle \nabla \psi_l,\nabla\psi_{k+1}\rangle_A=\int_\Omega A \nabla v_h\cdot\nabla\psi_{k+1}\dx\leq\int_\Omega F\cdot\nabla\psi_{k+1}+f\psi_{k+1}\dx.
	\end{equation*}
	This leads to
	\begin{equation}\label{eq:Induction1}
		\begin{aligned}
			v_h(y_{k+1})&\leq-\sum_{l\neq k+1} \frac{\langle\nabla\psi_l,\nabla\psi_{k+1}\rangle_A}{\langle \nabla\psi_{k+1},\nabla\psi_{k+1}\rangle_A}v_h(y_l)\\
			&\quad +\int_\Omega\frac{ F\cdot\nabla\psi_{k+1}}{\langle \nabla\psi_{k+1},\nabla\psi_{k+1}\rangle_A}\dx+\int\frac{f\psi_{k+1}}{\langle\nabla\psi_{k+1},\nabla\psi_{k+1}\rangle_A} \dx\\
		&=:I+II+III.
		\end{aligned}
	\end{equation}
	First, we consider term $I$. Since $0\leq v_h\leq 1$, and because $\mathcal{T}_h$ is $A$-nonobtuse in the sense of inequality \eqref{eq:aNonObtuse}, we find that
	\begin{align*}
		I&=-\sum_{l\neq k+1}\frac{\langle\nabla\psi_l,\nabla\psi_{k+1}\rangle_A}{\langle\nabla\psi_{k+1},\nabla\psi_{k+1}\rangle_A}v_h(y_l) \\
		&=-\sum_{l\neq k,k+1}\frac{\langle\nabla\psi_l,\nabla\psi_{k+1}\rangle_A}{\langle\nabla\psi_{k+1},\nabla\psi_{k+1}\rangle_A}v_h(y_l)-\frac{\langle\nabla\psi_k,\nabla\psi_{k+1}\rangle_A}{\langle\nabla\psi_{k+1},\nabla\psi_{k+1}\rangle_A}v_h(y_k)\\
		&\leq \sum_{l\neq k+1}-\frac{\langle\nabla\psi_l,\nabla\psi_{k+1}\rangle_A}{\langle\nabla\psi_{k+1},\nabla\psi_{k+1}\rangle_A}+\frac{\langle\nabla\psi_k,\nabla\psi_{k+1}\rangle_A}{\langle\nabla\psi_{k+1},\nabla\psi_{k+1}\rangle_A}-\frac{\langle\nabla\psi_k,\nabla\psi_{k+1}\rangle_A}{\langle\nabla\psi_{k+1},\nabla\psi_{k+1}\rangle_A}v_h(y_k).
	\end{align*}
Since $0=\nabla 1 = \sum_l \nabla \psi_l$ and therefore $-\sum_{l\neq k+1}\frac{\skp{\nabla\psi_l}{\nabla \psi_{k+1}}_A}{\skp{\nabla\psi_{k+1}}{\nabla\psi_{k+1}}_A}=\frac{\skp{\nabla\psi_{k+1}}{\nabla \psi_{k+1}}_A}{\skp{\nabla\psi_{k+1}}{\nabla\psi_{k+1}}_A}=1$, we get	
	\begin{align*}
		 I&\leq1+\frac{\langle\nabla\psi_k,\nabla\psi_{k+1}\rangle_A}{\langle\nabla\psi_{k+1},\nabla\psi_{k+1}\rangle_A}-\frac{\langle\nabla\psi_k,\nabla\psi_{k+1}\rangle_A}{\langle\nabla\psi_{k+1},\nabla\psi_{k+1}\rangle_A}v_h(y_k)\\
		 &=1+\frac{\langle\nabla\psi_k,\nabla\psi_{k+1}\rangle_A}{\langle\nabla\psi_{k+1},\nabla\psi_{k+1}\rangle_A}(1-v_h(y_k)).
	\end{align*}
	Because the sequence $y_i$ satisfies inequality \eqref{eq:AcuteSequence3},  this becomes
	\begin{equation}\label{eq:Induction2}
		I \leq 1-\tau+\tau v_h(y_k).
	\end{equation}
	
	Next we consider term $II$ from inequality \eqref{eq:Induction1}. We know that $\supp \psi_{k+1}= \overline{\Omega(y_{k+1})}$ and $0\leq\psi_l\leq 1$. Hence, $\norm{\psi_{k+1}}_{L^r(\Omega)}\sim h_T^{\frac{n}{r}}$ as $y_{k+1}\in T$. By equation \eqref{eq:h_Tdef} we find that $\norm{\nabla\psi_{k+1}}_{L^r(\Omega)}\sim h_T^{\frac nr -1}$. By the uniform ellipticity of $A$ we have $\skp{\nabla\psi_{k+1}}{\nabla\psi_{k+1}}_A\gtrsim \norm{\nabla\psi_{k+1}}_{L^2(\Omega)}^2$. This gives
	\begin{equation}\label{eq:Induction3}
		II\lesssim \frac{\|F\|_{L^p(\Omega)}\|\nabla\psi_{k+1}\|_{L^{\left(1-\frac 1n +\frac{\delta}{n}\right)^{-1}}(\Omega)}}{\|\nabla\psi_{k+1}\|_{L^2(\Omega)}^2}\sim\|F\|_{L^p(\Omega)}\frac{h_T^{n-2+\delta}}{h_T^{n-2}}\sim\|F\|_{L^p(\Omega)}h_T^\delta.
	\end{equation}
	
	Analogously, we find for term $III$, using H\"older's inequality, that
	\begin{equation}\label{eq:Induction4}
		III\lesssim \frac{\|f\|_{L^q(\Omega)}\|\psi_{k+1}\|_{L^{\left(1-\frac2n+\frac{\delta}{n}\right)^{-1}}(\Omega)}}{\|\nabla\psi_{k+1}\|^2_{L^2(\Omega)}}\sim\|f\|_{L^q(\Omega)}\frac{h_T^{n-2+\delta}}{h_T^{n-2}}\sim\|f\|_{L^q(\Omega)}h_T^\delta.
	\end{equation}
	Inserting the inequalities \eqref{eq:Induction2}, \eqref{eq:Induction3} and \eqref{eq:Induction4} into inequality \eqref{eq:Induction1} gives
	\begin{equation}\label{eq:Induction5}
		v_h(y_{k+1})\leq 1-\tau + \tau v_h(y_k)+C\left(\norm{F}_{L^p(\Omega)}+\norm{f}_{L^q(\Omega)}\right)h_T^\delta.
	\end{equation}
	
	Now assume that
	\begin{equation*}
		v_h\left(y_k\right)\leq 1 -\tau^k+\tau^k v_h\left(x_i\right)+C\left(\norm{F}_{L^p(\Omega)}+\norm{f}_{L^q(\Omega)}\right) h_T^\delta \sum_{m=0}^{k-1} \tau^m.
	\end{equation*}
	We recall that $y_0:=x_i$ and calculate
	\begin{align*}
		v_h\left(y_{k+1}\right)&\leq 1-\tau+\tau v_h\left(y_k\right)+C\left(\norm{F}_{L^p(\Omega)}+\norm{f}_{L^q(\Omega)}\right)h_T^\delta\\
		&\leq 1 - \tau +\tau \left(1 - \tau^k + \tau^k v_h(y_0)+C\left(\norm{F}_{L^p(\Omega)}+\norm{f}_{L^q(\Omega)}\right)h_T^\delta\sum_{m=0}^{k-1} \tau^m\right)\\
		&\qquad + C\left(\|f\|_{L^q(\Omega)}+\|F\|_{L^p(\Omega)}\right) h_T^\delta\\
		&=1 - \tau^{k+1} + \tau^{k+1}v_h(y_0) +C\left(\norm{F}_{L^p(\Omega)}+\norm{f}_{L^q(\Omega)}\right)h_T^\delta\sum_{m=1}^{k} \tau^m\\
		&\qquad +C\left(\norm{F}_{L^p(\Omega)}+\norm{f}_{L^q(\Omega)}\right)h^\delta\\
		&=1 - \tau^{k+1} + \tau^{k+1}v_h(y_0) +C\left(\norm{F}_{L^p(\Omega)}+\norm{f}_{L^q(\Omega)}\right)h_T^\delta\sum_{m=0}^{k} \tau^m.
	\end{align*}	
	By induction, this proves by setting $k=N-1$ and recalling again that $y_0:=x_i$ and $y_N=x_j$  the bound
	\begin{equation*}
		v_h\left(x_j\right)\leq 1 -\tau^N+\tau^N v_h\left(x_i\right)+C\left(\norm{F}_{L^p(\Omega)}+\norm{f}_{L^q(\Omega)}\right) h_T^\delta \sum_{m=0}^{N-1} \tau^m.
	\end{equation*}
	Because $\sum_{m=0}^{N-1}\tau^m$ is bounded and depends only on $N\leq n$ and $\tau$, which depends only on the shape-regularity parameter, we deduce the assertion of the lemma.
\end{proof}

This leads to the following lemma, which is a discrete version of Lemma \ref{lem:ContLemma}. It is a generalisation of ideas from \cite{AguCaf86}, where the discussion was restricted to Laplace's equation.
\begin{lemma}\label{lem:SmallSup}
    Under the assumptions of Theorem \ref{thm:DiscreteHoelderInteriour}, let $B(x_0,R)$ be a ball with $R\geq h_T$ and $B(x_0,QR)\subset\Omega$ where $Q$ is the constant from inequality \eqref{eq:regularBall} and $x_0\in  T$ for some $T\in\mathcal{T}_h$. Furthermore, let $v_h$ be a discrete subsolution to $-\divergence(A\nabla u)=f-\divergence  F$ with $0\leq v_h\leq 1$ on $B(x_0,QR)$. Assume further, that
    \begin{equation}\label{eq:PoincareAssumption2}
        \Bigabs{\bigcup_{i\,:\,v_h(x_i)=0}P_i\cap \Omega(B(x_0,2R))}\geq \beta \abs{\Omega(B(x_0,2R))}
    \end{equation}
    for some $\beta>0$. Then, there exist constants $R_0$, $\theta \in (0,1)$ and $C>0$, such that
    \begin{equation}
        \sup_{\Omega'(B(x_0,R))} v_h\leq 1-\theta + C\left(\norm{G}_{L^p(\Omega)}+\norm{f}_{L^q(\Omega)}\right)R^\delta
    \end{equation}
    for any $R\in\left(h_T,R_0\right)$.
\end{lemma}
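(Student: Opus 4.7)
My plan is to mirror the proof of the continuous analog Lemma \ref{lem:ContLemma}, substituting each classical tool with its discrete counterpart from the preceding sections. I will first fix $s \in (0, \tau^N)$, where $\tau \in (0,1)$ and $N \leq n$ are the constants from Lemma \ref{lem:DiscreteSmallBallSmallFunction}, and define the geometric sequence $\lambda_k := 1 - s^k$ together with the nodally taken rescaled truncations $u_k := (v_h - \lambda_k)_+/s^k$ (using Definition \ref{def:NodalMax}). By Corollary \ref{cor:PositivePartSubsol}, each $u_k \in V_h$ will be a nonnegative discrete subsolution with forcing scaled by $s^{-k}$, and $0 \leq u_k \leq 1$ since $0 \leq v_h \leq 1$. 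Setting $A_k := \Omega(B(x_0,2R)) \cap \{u_k > 0\}$, which decreases in $k$, the discrete $L^\infty$ estimate of Theorem \ref{thm:DiscreteBounded} applied to $u_k$, combined with $u_k \leq 1$ to bound $\int u_k^2 \leq |A_k|$, will yield
\begin{equation*}
\max_{\Omega'(B(x_0, R))} u_k^2 \lesssim \frac{|A_k|}{R^n} + s^{-2k}\,\bigl(\|G\|_{L^p(\Omega)}^2 + \|f\|_{L^q(\Omega)}^2\bigr)\, R^{2\delta}.
\end{equation*}

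The conclusion will then follow if I can locate a fixed $\bar k$, bounded in terms of $\beta$, $s$, and the shape-regularity parameter, such that $|A_{\bar k}|/R^n \leq \beta_0$ for a sufficiently small absolute $\beta_0$: choosing $R_0$ so that $s^{-\bar k}R_0^\delta(\|G\|_{L^p}+\|f\|_{L^q})$ is small will then give $u_{\bar k} \leq 1/2$ on $\Omega'(B(x_0,R))$ modulo a controlled error, which translates to $\sup v_h \leq 1 - s^{\bar k}/2 + C(\|G\|_{L^p}+\|f\|_{L^q})R^\delta$, the asserted bound with $\theta = s^{\bar k}/2$. To locate $\bar k$, I will aim to derive an iteration inequality of the shape
\begin{equation*}
|A_{k+1}|^2 \lesssim R^n\,(|A_k| - |A_{k+1}|) + \mathrm{error},
\end{equation*}
after which Lemma \ref{lem:iteration2} will drive $|A_k|/R^n$ below any prescribed threshold in a bounded number of steps.

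The crux will be deriving this iteration in the discrete setting. In the continuous proof it rested on the identity $A_{k+1}\cap\{0<u_k<1/2\} = \emptyset$, which fails discretely because on a simplex in $A_{k+1}$ that also contains nodes outside $A_{k+1}$ the affine interpolant $u_k$ dips below $1/2$ in the interior. I will restore a substitute for this identity by applying Lemma \ref{lem:DiscreteSmallBallSmallFunction} to $v_h$: if $v_h(x_j) > \lambda_{k+1}$ at some node $x_j$, then at every other node $x_i$ of the same simplex $v_h(x_i) \geq 1 - s^{k+1}/\tau^N - C h_T^\delta/\tau^N$. Because $s < \tau^N$, the dominant term $s^{k+1}/\tau^N$ is strictly smaller than $s^k$, so up to an error of order $h_T^\delta \leq R^\delta$ we will have $v_h(x_i) > \lambda_k$; equivalently, $u_k \geq c$ at every node belonging to a simplex in $A_{k+1}$, for a fixed constant $c = c(s,\tau^N) > 0$.

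With this nodal lower bound on $u_k$ inside $A_{k+1}$ in hand, I will follow the continuous template: the discrete Poincar\'e inequality of Theorem \ref{thm:Poincare}, whose hypothesis is exactly \eqref{eq:PoincareAssumption2}, applied to the nodal truncation $\min\{u_k, c\}$, together with Cauchy--Schwarz and the discrete Caccioppoli bound $\int |\nabla u_k|^2 \lesssim R^{n-2}$ obtained from Corollary \ref{cor:SubsolNodalMax2} (using $u_k \leq 1$), will deliver the desired iteration inequality. Lemma \ref{lem:iteration2} then yields the required $\bar k$, and the restriction $R < R_0$ absorbs all accumulated errors of the form $s^{-\bar k}R^\delta$ and $h_T^\delta/\tau^N$ uniformly. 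The hardest part will be tuning $s$ to the propagation constant $\tau^N$ of Lemma \ref{lem:DiscreteSmallBallSmallFunction} so that the decay rate of the $\lambda_k$ is compatible with the chain-propagation errors while still leaving a fixed quantitative gap that mimics the continuous "jump" by $1/2$.
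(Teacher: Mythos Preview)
Your proposal is correct and follows essentially the same route as the paper's proof. The paper makes the specific choice $s = 1-\mu = \tau^N/4$ (your generic $s\in(0,\tau^N)$), applies Lemma~\ref{lem:DiscreteSmallBallSmallFunction} to the rescaled $v_k$ rather than to $v_h$ directly (equivalent up to the $s^{-k}$ scaling of the forcing), and uses the \emph{pointwise} $\min\{v_k,\tfrac12\}$ in the Poincar\'e step rather than the nodal truncation you suggest; otherwise the sequence of levels, the use of Theorem~\ref{thm:DiscreteBounded} for the sup bound, the derivation of $A_{k+1}\cap\{0<v_k<\tfrac12\}=\emptyset$ from the propagation lemma, and the invocation of Lemma~\ref{lem:iteration2} all match your outline.
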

\begin{proof}
	We recall our notational convention $2B:=B(x_0,2R)$ and define, for any $R\geq h_T$,
	\begin{align*}
		\mu:=1-\frac{\tau^N}{4},\quad\;
		\lambda_k:=1-(1-\mu)^k,\quad\;
		v_k:=\frac{1}{1-\lambda_k}(v_h-\lambda_k)_+,\quad\;
		A_k:=\Omega(2B)\cap\{{\color{blue}v_k}>0\}.
	\end{align*}
	Again, the  operation $(...)_+$ is meant in a nodal sense and $N$ and $\tau$ are the constants from Lemma \ref{lem:DiscreteSmallBallSmallFunction}.
	
	As we have $v_h\leq 1$, we have $0\leq v_k\leq 1$ as well. Note that $v_k$ is a subsolution to $-\divergence(A\nabla v)=\frac{f}{(1-\mu)^k}-\frac{\divergence  G}{(1-\mu)^k}$ because of Corollary \ref{cor:PositivePartSubsol}, so the local $L^\infty$-estimates from Theorem \ref{thm:DiscreteBounded} hold. This means that we get
	\begin{equation}\label{eq:Linfty}\begin{aligned}
		\sup_{\Omega'(B)}{v_k}&\lesssim \bigg(\dashint_{\Omega(2B)}v_k^2\dx\bigg)^{\frac 12}+C\left(\frac{\|G\|_{L^p}+\|f\|_{L^q}}{(1-\mu)^k}\right) R^\delta\\
		&\lesssim \left(\frac{|A_k|}{R^n}\right)^{\frac 12}+C\left(\frac{\|G\|_{L^p}+\|f\|_{L^q}}{(1-\mu)^k}\right) R^\delta.
	\end{aligned}\end{equation}
	If we can find a $\bar{k}$ such that $|A_{\bar{k}}|\leq \beta R^n+C_\beta R^{n+2\delta}$ for a small $\beta$, we will have $v_{\bar{k}}\leq \frac 12 + \tilde{C}R^\delta$. Assume that $v_{k+1}(x_i)>0$ for some $x_i\in \Omega(2B)$.
Because the sequence $(\lambda_k)_{k \geq 0}$ is strictly monotonically increasing, if $v_{k+1}(x_i)>0$ for some $x_i\in \Omega(2B)$, as has been assumed, then
$v_h(x_i)- \lambda_k > v_h(x_i) - \lambda_{k+1}>0$, whereby $(v_h(x_i)- \lambda_k)_+ = v_h(x_i)- \lambda_k$. Hence, we have
\begin{align*}
	 0<v_{k+1}(x_i)&=\frac{v_h(x_i)-\lambda_{k+1}}{1-\lambda_{k+1}}=\frac{1-\lambda_{k}}{1-\lambda_{k+1}}\frac{v_h(x_i)-\lambda_k}{1-\lambda_k}+\frac{\lambda_k-\lambda_{k+1}}{1-\lambda_{k+1}}\\
	&=\frac{1-\lambda_{k}}{1-\lambda_{k+1}} v_k(x_i)+\frac{\lambda_k-\lambda_{k+1}}{1-\lambda_{k+1}}\\
	&=\frac{v_k(x_i)}{1-\mu}+\frac{(1-\mu)^{k+1}-(1-\mu)^{k}}{(1-\mu)^{k+1}}\\
	&=\frac{v_k(x_i)}{1-\mu}+\frac{1}{1-\mu}(1-\mu-1)\\
	&=\frac{v_k(x_i)-\mu}{1-\mu},
\end{align*}
and therefore
\begin{equation}\label{eq:nuLeqVk}
	\mu<v_k(x_i).
\end{equation}

We also define a sequence $R_k$, such that
	\begin{equation}\label{eq:R_k}
		\frac{C\left(\|G\|_{L^p}+\|f\|_{L^q}\right)}{(1-\mu)^k}\left( 2 Q R_k \right)^\delta=\frac{\tau^N}{4}
	\end{equation}
	and assume that $R_k\geq h_T$.
	By inequality \eqref{eq:radiiShapeRegular} and inclusion \eqref{eq:regularBall}, we know that
	\begin{equation}\label{eq:R_k2}
		2 QR\geq\text{diam}\left(\Omega\left(B(x_0,2R)\right)\right)\geq h_S
	\end{equation}	
	for any $S\in \mathcal{T}_h$ with $S\subset\Omega(B(x_0,2R))$, as long as $R\geq h_T$. By combining equality \eqref{eq:R_k} and inequality \eqref{eq:R_k2} we have that
	\begin{equation}\label{eq:R_k3}
		\frac{C\left(\|G\|_{L^p}+\|f\|_{L^q}\right)}{(1-\mu)^k}h_S^\delta\leq \frac{\tau^N}{4}
	\end{equation}
	for any $R\in[h_T, R_k]$ and any $S\subset\Omega(2B)$.

Assume that $R\leq R_k$ for some $k$. Again, note that $v_k$ is a subsolution to $-\divergence(A\nabla v)=\frac{f}{(1-\mu)^k}-\frac{\divergence  F}{(1-\mu)^k}$. This means, by Lemma \ref{lem:DiscreteSmallBallSmallFunction}, that for any node $x_j$ of the same simplex $S\subset\Omega(B(x_0,2 R))$ as $x_i$, we get
\begin{align*}
	1-\frac{\tau^N}{4}&=\mu<v_k(x_i)\leq 1-\tau^N+\tau^N v_k(x_j)+C\left(\|\tilde{F}\|_p+\|\tilde{f}\|_q\right)h_S^\delta\\
	&\leq 1-\tau^N+\tau^N v_k(x_j)+\frac{C\left(\|F\|_p+\|f\|_q\right)}{(1-\mu)^k}h_S^\delta.
\end{align*}
Now recall that $R\leq R_k$ and note the inequality \eqref{eq:R_k3} to get
\begin{equation}\label{eq:u_kgeq12}
	1-\frac{\tau^N}{4}{\color{blue}<} 1-\tau^N+\tau^N v_k(x_j)+\frac{\tau^N}{4},
\end{equation}
which implies that $v_k(x_j)>\frac 12$. Recall from inequality \eqref{eq:nuLeqVk} that we have $\frac{1}{2}\leq\mu<v_k(x_i)$. Thus, we get 
\begin{equation*}
	A_{k+1}\subset \left\{v_k>\frac 12\right\}\cap \Omega(B(2R)),
\end{equation*}
as $v_k(x_k)>\frac 12$ at all nodes of $A_{k+1}$, because if an $S\in\mathcal{T}_h$ satisfies $S\subset A_k$, there is a node $x_0$ of $S$ such that $v_k(x_0)>\lambda_{k+1}$, and this means that we have shown that $v_k(x_j)\geq\frac 12$ for any other node of that simplex $S$ by inequality \eqref{eq:u_kgeq12}. Furthermore, as we are using piecewise affine basis functions, we know that $\max_S v_h$ and $\min_S v_h$ are attained at a node on any simplex $S\in\mathcal{T}_h$. Therefore, we get

\begin{equation*}
	\left\{0<v_k<\frac 12\right\}\cap A_{k+1}=\emptyset.
\end{equation*}
However, as we have $\left\{0<v_k<\frac 12\right\}\cap \Omega(2B)\subset A_k$ and $A_{k+1}\subset A_k$, it follows that
\begin{equation}\label{eq:AkMeasure}
	|A_k|\geq\left|\left(\left\{0<v_k<\frac 12\right\}\cap \Omega(2B)\right)\cup A_{k+1}\right|=\left|\left\{0<v_k<\frac 12\right\}\cap\Omega(B(2R))\right|+|A_{k+1}|.
\end{equation}
Now note that if $v_h(x_i)=0$, we also have $v_k(x_i)=0$. This means that inequality \eqref{eq:PoincareAssumption} holds for all $v_k$ for the same $\gamma=\beta$ by assumption \eqref{eq:PoincareAssumption2}. This allows us to use the Poincar\'e-type inequality from Theorem \ref{thm:Poincare}. Note that $\Omega(2B)$ is an $\alpha$-John domain, and the angle $\alpha$ only depends on the shape-regularity constant $\Gamma$. Thus, the Poincar\'e constant of $\Omega(2B)$ only depends on $\Gamma$. For additional details, we refer to \cite{DRS10}. Hence,
\begin{align*}
	|A_{k+1}|&\leq 2\int_{\Omega(2B)}\min\left\{v_k,\frac 12\right\}\dx\\
	&\lesssim R \int_{\Omega(2B)}\left|\nabla\left(\min\left\{v_k,\frac 12\right\}\right)\right|\dx\\
	&\leq R\int_{\Omega(2B)\cap\{0<v_k<\frac 12\}} |\nabla v_k|\dx\\
	&\leq R\left(\int_{\Omega(2B)}|\nabla v_k|^2\dx\right)^{\frac{1}{2}}\left|\left\{0<v_k<\frac 12\right\}\cap \Omega(B(2R))\right|^{\frac 12}.
\end{align*}
Here, the $\min$ is meant in the pointwise rather than in the nodal sense. 
As we have $0\leq v_k\leq 1$, we can apply the discrete Caccioppoli-type inequality \eqref{eq:CaccioppoliTruncated2} to deduce that
\begin{align*}
	R^2\int_{\Omega(2B)} |\nabla v_k|^2\dx&\lesssim\int_{B(2Q^2R)}\dx |v_k|^2+\frac{C\left(\|G\|^2_{L^p}+\|f\|_{L^q}^2\right)}{(1-\mu)^k}R^{n+2\delta}\\
	&\lesssim R^n+C_kR^{n+2\delta}.
\end{align*}
This gives
\begin{equation*}
	|A_{k+1}|\lesssim \left(R^n+\frac{C\left(\|G\|_{L^p}^2+\|f\|^2_{L^q}\right)}{(1-\mu)^k} R^{n+2\delta}\right)^{\frac 1 2}\left|\left\{0<v_k<\frac 12\right\}\cap \Omega(2B)\right|^{\frac 12}.
\end{equation*}
Combining this with \eqref{eq:AkMeasure} yields
\begin{equation*}
	|A_{k+1}|\lesssim \left(R^n+\frac{C\left(\|G\|^2_{L^p}+\|f\|^2_{L^q}\right)}{(1-\mu)^k} R^{n+2\delta}\right)^{\frac 12}(|A_k|-|A_{k+1}|)^{\frac 12}.
\end{equation*}
Note that $|A_0|\lesssim R^n$. With the help of Lemma \ref{lem:iteration2} we deduce that, for any $R\leq R_k$,
\begin{align*}
	|A_k|&\leq\frac{\sqrt{\max_{i\leq k}\left(CR^n+\frac{C\left(\|G\|^2_{L^p}+\|f\|^2_{L^q}\right)}{(1-\mu)^i} R^{n+2\delta}\right)}\sqrt{|A_0|}}{\sqrt{k}}\\
	&\lesssim\frac{\sqrt{C}}{\sqrt{k}}R^n+\sqrt{\frac{C\left(\|G\|^2_{L^p}+\|f\|^2_{L^q}\right)}{k(1-\mu)^k}}R^{n+\delta},
\end{align*}
where we have used that that all norms on $\setR^2$ are equivalent, so in particular $\sqrt{x^2+y^2}\sim|x|+|y|$.
This means that for any $\beta>0$, there exists a $\bar{k}$ such that, for any $R\leq R_{\bar{k}}$,
\begin{equation*}
	|A_{\bar{k}}|\leq \beta R^n+C_\beta \left(\|G\|_{L^p}+\|f\|_{L^q}\right) R^{n+\delta}.
\end{equation*}
Thanks to inequality \eqref{eq:Linfty} this gives, for some $C>0$,
$$\sup_{\Omega'(B(R))}v_{\bar{k}}\leq \frac 12+C\left(\|G\|_{L^p}+\|f\|_{L^q}\right)R^{\delta}.$$
 Recall that $v_k=\frac{(v_h-\lambda_k)_+}{1-\lambda_k}$. Then, we get on $\Omega'(B(R))$ that
\begin{align*}
	v_h(x)&\leq(1-\lambda_{\bar{k}})v_{\bar{k}}(x)+\lambda_{\bar{k}}\\
	&\leq(1-\lambda_{\bar{k}})\left(\frac 12+C\left(\|G\|_{L^p}+\|f\|_{L^q}\right)R^{\delta}\right) +\lambda_{\bar{k}}\\
	&=\frac 12 (\lambda_{\bar{k}}+1)+(1-\lambda_{\bar{k}})C\left(\|G\|_{L^p}+\|f\|_{L^q}\right)R^{\delta}\\
	&=\frac 12\left(2-(1-\mu)^{\bar{k}}\right)+ (1-\lambda_{\bar{k}})C\left(\|G\|_{L^p}+\|f\|_{L^q}\right)R^{\delta}\\
	&\leq 1-\frac 12 (1-\mu)^{\bar{k}}+ C\left(\|G\|_{L^p}+\|f\|_{L^q}\right)R^{\delta}\\
	&=1-\frac 12 \left(\frac{\tau^N}{4}\right)^{\bar{k}}+ C\left(\|G\|_{L^p}+\|f\|_{L^q}\right)R^\delta.
\end{align*}
This proves the lemma with $\theta=\frac 12 \left(\frac{\tau}{4}\right)^{\bar{k}}$ and $R_0=R_{\bar{k}}$.
\end{proof}

Lemma \ref{lem:SmallSup} allows us to prove a bound on the oscillation of $u_h$, stated in the following lemma.
\begin{lemma}\label{lem:oscDecay}
    Under the assumptions of Theorem \ref{thm:DiscreteHoelderInteriour}, let $R_0$ be the constant from Lemma \ref{lem:DiscreteSmallBallSmallFunction}, let $x_0\in  T$ for some $T\in\mathcal{T}_h$ and $R\in(h_T,R_0)$, and suppose that $B(x_0,QR)\subset\Omega$. Then, there exist constants $\theta \in (0,1)$ and $C>0$ such that
    \begin{equation}\label{eq:DiscreteOscillations}
        \osc_{\Omega'(B(x_0,R))}u_h\leq (1-\theta)\osc_{B(x_0,2QR)}u_h+C\left(\|G\|_{L^p}+\|f\|_{L^q}\right)R^\delta.
    \end{equation}
\end{lemma}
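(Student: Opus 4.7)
The idea is to mimic the proof of Theorem \ref{thm:ContCalpha} in the continuous case, replacing Lemma \ref{lem:ContLemma} by its discrete analogue Lemma \ref{lem:SmallSup}. Let $M:=\sup_{B(x_0,2QR)}u_h$, $m:=\inf_{B(x_0,2QR)}u_h$ and $c:=(M+m)/2$, so that $\osc_{B(x_0,2QR)}u_h=M-m$ and the rescaled function
\begin{equation*}
	\tilde{u}_h := \frac{2}{M-m}\big(u_h - c\big)
\end{equation*}
satisfies $-1\leq \tilde{u}_h\leq 1$ on $B(x_0,2QR)\supset\Omega(B(x_0,2R))$. The function $\tilde{u}_h$ is a discrete solution (hence a discrete subsolution) of $-\divergence(A\nabla\tilde u)=\tilde f - \divergence\tilde F$ with $\tilde f := 2f/(M-m)$, $\tilde F:= 2F/(M-m)$ and dominating function $\tilde G := 2G/(M-m)$ in the sense of Definition \ref{def:AssumptionStar}; and so is $-\tilde{u}_h$.

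Next, since $\Omega(B(x_0,2R))$ is partitioned by the sets $\{\tilde{u}_h\leq 0\}$ and $\{\tilde{u}_h\geq 0\}$, at least one of these two sets occupies at least half of the measure of $\Omega(B(x_0,2R))$; without loss of generality assume it is the former (otherwise carry out the argument below with $-\tilde u_h$ in place of $\tilde u_h$). Set $v_h:=(\tilde{u}_h)_+$ in the nodal sense of Definition \ref{def:NodalMax}; by Corollary \ref{cor:PositivePartSubsol}, $v_h$ is a discrete subsolution of $-\divergence(A\nabla w)=|\tilde f|-\divergence\tilde G$, and clearly $0\leq v_h\leq 1$ on $B(x_0,QR)$. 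The crucial geometric observation is that for any simplex $T\subset\Omega(B(x_0,2R))$ that meets $\{\tilde u_h\leq 0\}$, affinity of $\tilde u_h$ on $T$ forces at least one node $x_i$ of $T$ to satisfy $\tilde u_h(x_i)\leq 0$, and hence $v_h(x_i)=0$. Consequently,
\begin{equation*}
	\{\tilde u_h\leq 0\}\cap \Omega(B(x_0,2R))\subset \bigcup_{i\,:\, v_h(x_i)=0} P_i,
\end{equation*}
so the measure-theoretic hypothesis \eqref{eq:PoincareAssumption2} of Lemma \ref{lem:SmallSup} holds with $\beta=\tfrac12$.

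I then apply Lemma \ref{lem:SmallSup} to $v_h$ (with the appropriate small adjustment of radius if needed so that $v_h\leq 1$ on the required sub-ball, which follows from shape-regularity and the bound $R\geq h_T$) to obtain
\begin{equation*}
	\sup_{\Omega'(B(x_0,R))} v_h \leq 1-\theta + C\bigl(\|\tilde G\|_{L^p(\Omega)}+\|\tilde f\|_{L^q(\Omega)}\bigr)R^\delta.
\end{equation*}
Since $v_h\geq \tilde u_h$ pointwise on $\overline\Omega$ (because $v_h-\tilde u_h=\sum_i\bigl((\tilde u_h(x_i))_+-\tilde u_h(x_i)\bigr)\psi_i$ is a nonnegative linear combination of the nonnegative basis functions $\psi_i$), the same bound holds with $\tilde u_h$ in place of $v_h$. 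Unscaling via $u_h = c + \tfrac{M-m}{2}\tilde u_h$ and using $\|\tilde G\|_{L^p}+\|\tilde f\|_{L^q} = \tfrac{2}{M-m}(\|G\|_{L^p}+\|f\|_{L^q})$ yields
\begin{equation*}
	\sup_{\Omega'(B(x_0,R))} u_h \leq c + \tfrac{M-m}{2}(1-\theta) + C\bigl(\|G\|_{L^p(\Omega)}+\|f\|_{L^q(\Omega)}\bigr)R^\delta.
\end{equation*}
Combining this with the trivial lower bound $\inf_{\Omega'(B(x_0,R))}u_h \geq m$ (valid because $\Omega'(B(x_0,R))\subset \Omega(B(x_0,R))\subset B(x_0,QR)\subset B(x_0,2QR)$) gives
\begin{equation*}
	\osc_{\Omega'(B(x_0,R))} u_h \leq \tfrac{M-m}{2}(2-\theta) + C\bigl(\|G\|_{L^p(\Omega)}+\|f\|_{L^q(\Omega)}\bigr)R^\delta = (1-\tfrac{\theta}{2})\osc_{B(x_0,2QR)}u_h + CR^\delta,
\end{equation*}
which is the claim after renaming $\theta/2$ as $\theta$. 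The symmetric case where $\{\tilde u_h\geq 0\}$ has the larger measure is handled by applying the same argument to $-\tilde u_h$, which is possible since Definition \ref{def:AssumptionStar} is invariant under $F\mapsto -F$ with the same $G$. The main technical obstacle is bookkeeping with the various radii $R$, $QR$, $2R$, $2QR$ so that Lemma \ref{lem:SmallSup} applies with the normalisation $0\leq v_h\leq 1$ on its required sub-ball; this is purely geometric and uses only shape-regularity together with \eqref{eq:regularBall} and \eqref{eq:OmegaPrime2}.
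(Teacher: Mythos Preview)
Your proposal is correct and follows essentially the same approach as the paper's proof: normalize $u_h$ to lie in $[-1,1]$ on a suitable enlarged ball, observe that for at least one of $\pm\tilde u_h$ the nodal positive part satisfies the measure hypothesis \eqref{eq:PoincareAssumption2} of Lemma~\ref{lem:SmallSup}, apply that lemma, and unscale. The only cosmetic differences are that the paper normalizes over $\Omega(B(x_0,2R))$ rather than $B(x_0,2QR)$, and verifies \eqref{eq:PoincareAssumption2} via the identity $\bigcup_{u_h(x_i)\le 0}P_i\cup\bigcup_{-u_h(x_i)\le 0}P_i=\Omega$ rather than through your pointwise affinity argument; your route to the same conclusion is equally valid.
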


\begin{proof}
	We begin by noting that $u_h$ is by construction continuous on $\overline\Omega$. As $\osc (cu_h+d)=|c|\osc u_h$, we first set $\tilde{u}_h=u_h-\frac 12\left(\max_{\Omega(2B)}u_h+\min_{\Omega(2B)}u_h\right)$. We then rescale to $\tilde{\tilde{u}}_h={\tilde{u}_h}/{\|\tilde{u}_h\|_{L^\infty(\Omega(2B))}}$ to get $-1\leq\tilde{\tilde{u}}_h\leq 1$ on $\Omega(2B)$. Then, $\tilde{\tilde{u}}_h$ is still an approximate solution to $-\divergence  (A\nabla u)=\tilde{f}-\divergence\tilde{F}$  with a rescaled right-hand side, i.e., for $\tilde{f}={f}/{\|\tilde{u}_h\|_{L^\infty(\Omega(2B))}}$ and $\tilde{F}={F}/{\|\tilde{u}_h\|_{L^\infty(\Omega(2B))}}$  and $$2=\osc_{\Omega(2B)}\tilde{\tilde{u}}_h\leq\osc_{B(x_0,2QR)}\tilde{\tilde{u}}_h.$$
	Note that
	\begin{equation*}
		\bigcup_{i\,:\,u_+(x_i)=0}\Omega(x_i)=\bigcup_{i\,:\,u_h(x_i)\leq 0}\Omega(x_i)
	\end{equation*}
	and
	\begin{equation*}
		\left(\bigcup_{i\,:\,u_h(x_i)\leq 0}\Omega(x_i)\right)\cup\left(\bigcup_{i\,:\,-u_h(x_i)\leq 0}\Omega(x_i)\right)=\Omega.
	\end{equation*}
	This means that the inequality \eqref{eq:PoincareAssumption2} is satisfied for at least one of the functions $\tilde{\tilde{u}}_+$ and $(-\tilde{\tilde{u}}_h)_+$. As $u_h$ is a finite element solution, both $\tilde{\tilde{u}}_+$ and $(-\tilde{\tilde{u}}_h)_+$ are subsolutions, with right-hand sides $\tilde{f}$, $\tilde{F}$ and $-\tilde{f}$, $-\tilde{F}$, respectively. Furthermore, $\tilde{F}$ and $-\tilde{F}$ both satisfy assumption $(\star)$ with the dominating function $\tilde{G}=G/\norm{\tilde{u}_h}_{L^\infty(\Omega(2B))}$. As $\osc \tilde{\tilde{u}}_h=\osc(-\tilde{\tilde{u}}_h)$, we are free to choose the one for which inequality \eqref{eq:PoincareAssumption2} holds.
	
	We can therefore apply Lemma \ref{lem:SmallSup} to either $\tilde{\tilde{u}}_h$ or $-\tilde{\tilde{u}}_h$ and get
	\begin{align*}
		\osc_{\Omega'(B(R))}\tilde{\tilde{u}}_h &\leq \sup_{B(R)}\tilde{\tilde{u}}_++1 \leq 2-\theta+D\left(\|\tilde{G}\|_{L^p}+\|\tilde{f}\|_{L^q}\right)R^\delta\\
		 &=\left(1-\frac{\theta}{2}\right)\osc_{B(x_0,2QR)}\tilde{\tilde{u}}_h+\frac{D\left(\|G\|_{L^p}+\|f\|_{L^q}\right)}{\|\tilde{u}_h\|_{L^\infty(\Omega(2B))}}R^\delta.
	\end{align*}
		Hence, after multiplying through by $\|\tilde{u}_h\|_{L^\infty(\Omega(2B))}$, using that $\osc u_h=\|\tilde{u}_h\|_{L^\infty(\Omega(2B))} \osc \tilde{\tilde{u}}_h$,
		and redefining $\theta\leadsto\frac 12\theta$ we have that
		
	\begin{equation*}
		\osc_{\Omega'(B(x_0,R))}u_h\leq (1-\theta)\osc_{B(x_0,2QR)} u_h +D\left(\|G\|_{L^p}+\|f\|_{L^q}\right)R^\delta,
	\end{equation*}
	which completes the proof.
	\end{proof}

We are now ready to prove the interior H\"older-regularity result from Theorem \ref{thm:DiscreteHoelderInteriour}.

\begin{proof}[Proof of Theorem \ref{thm:DiscreteHoelderInteriour}]
	First we will need to show that $u_h$ is uniformly bounded on balls $B(x_0,2QR)$ if $B(x_0,4\kappa^{-1}QR)\subset\Omega$. Inequality \eqref{eq:DiscreteBounded} from Theorem \ref{thm:DiscreteBounded} yields
	\begin{equation}\label{eq:uniformBound}
		 \|u_h\|^2_{L^\infty(2QB)}\leq\norm{u_h}_{L^\infty(\Omega'(2Q\kappa^{-1}B))}^2\lesssim\|u_h\|^2_{L^2(\Omega(4\kappa^{-1}QB)}+C\left(\|f\|^2_{L^p(\Omega)}+\|G\|_{L^q(\Omega)}^2\right).\\
	\end{equation}
	
Thanks to the homogeneous Dirichlet boundary condition satisfied by $u_h$, we can use Poincar\'e's inequality to deduce that
	\begin{equation*}
		\|u_h\|_{L^2(\Omega(4QB))}^2\leq\|u_h\|_{L^2(\Omega)}^2\leq\|\nabla u_h\|_{L^2(\Omega)}^2.
	\end{equation*}
Because of the uniform ellipticity of $A$ and testing equation \eqref{eq:DiscreteInhom} against $\phi_h=u_h$ we get
	\begin{equation*}
		\|\nabla u_h\|_{L^2(\Omega)}^2\lesssim\int_\Omega A\nabla u_h\cdot \nabla u_h\dx=\int_\Omega F\cdot \nabla u_h +f u_h\dx.
	\end{equation*}
	We can now use H\"older's inequality and Sobolev embedding. Also note that $p>n\geq 2$  and  $q > \frac n 2 \geq \frac{2n}{n+2}$; hence,
	\begin{align*}
		\norm{\nabla u_h}_{L^2(\Omega)}^2&\leq\|\nabla u_h\|_{L^2(\Omega)}\|F\|_{L^2(\Omega)}+\|f\|_{L^{\frac{2n}{n+2}}(\Omega)}\|u_h\|_{L^{2^\star}(\Omega)}\\
		&\lesssim \|\nabla u_h\|_{L^2(\Omega)}\|F\|_{L^p(\Omega)}+\|f\|_{L^q(\Omega)}\|\nabla u_h\|_{L^2(\Omega)}.
	\end{align*}	Together with inequality \eqref{eq:uniformBound} this shows that for any $B$ with $4QB\subset\Omega$, $\|u_h\|^2_{L^\infty(2QB)}$ is uniformly bounded.

	Next we will prove inequality \eqref{eq:oscRAlpha} for different values of $R$. If
$$R\in\left[h_T,\left(\frac 1{2CQ}\left(\frac{\tau^N}{4}\right)^{\bar{k}+1}\right)^{\frac{1}{\delta}}\right]=:[R_0,R_1]\quad \mbox{ (see the conditions of the Lemma \ref{lem:oscDecay} and eqn. \eqref{eq:radiiShapeRegular})},$$
we can use Lemma \ref{lem:oscDecay} to get
	\begin{equation}\label{eq:oscBall}
		\osc_{\Omega'(B(x_0,R))}u_h\leq (1-\theta)\osc_{B(x_0,2QR)} u_h +D\left(\|G\|_{L^p(\Omega)}+\|f\|_{L^q(\Omega)}\right)R^\delta.
	\end{equation}
	Now, the inclusion \eqref{eq:OmegaPrime2} guarantees that there is a $\kappa>0$, depending on shape-regularity, such that $B(\kappa R)\subset\Omega'(B(R))$. This gives, together with inequality \eqref{eq:oscBall}, that
	\begin{equation}
		\osc_{B(x_0,\kappa R)}u_h\leq(1-\theta)\osc_{B(x_0,2QR)} u_h +D\left(\|G\|_{L^p(\Omega)}+\|f\|_{L^q(\Omega)}\right)R^\delta,
	\end{equation}	
	which allows us to apply Lemma \ref{lem:C-alpha-iteration} with $\phi(r)=\osc_{B(2Qr)}u_h$, $\tau=\frac {\kappa} {2Q}$, $1-\theta=(2Q\kappa^{-1})^{-\alpha_1}$ and $\alpha_2=\delta$. If it turns out that $\delta\geq\alpha_1$, we can use a weaker norm for $f$ and $F$ in our Caccioppoli estimates that result in a smaller $\delta$. This leaves us with
	\begin{equation}\label{eq:HoelderInteriour1}
		\osc_{B(x_0,R)}u_h\leq \left(\frac{C}{R_1^{\delta}}+C\right)R^{\delta}.
	\end{equation}
	If $R>\left(\frac 1{2CQ}\left(\frac{\tau^N}{4}\right)^{\bar{k}+1}\right)^{\frac{1}{\delta}}$, we have $2CQ\left(\frac{4}{\tau^N}\right)^{\bar{k}+1} R^\delta\geq 1$ and get		
\begin{equation}\label{eq:HoelderInteriour2}
		\osc_{B(x_0,R)}u_h\leq 2 \norm{u_h}_{L^{\infty}(B(R))}\leq 4CQ \norm{u_h}_{L^\infty(B(x_0,R))}\left(\frac{4}{\tau^N}\right)^{\bar{k}+1}R^\delta.
	\end{equation}
		
	For $R\leq  h_T$, assume first that $B(x_0,R)\subset \Omega( T)$. We note that $u_h$ is piecewise affine on every $S\in\mathcal{T}_h$.
	Denote by $L_{u_h,S}$ the Lipschitz constant of $u_h$ on a simplex $S\in\mathcal{T}_h$. For a ball $B(x_0,R)$ we have that
	\begin{equation}\label{eq:HoelderInteriour3}
		\osc_{B(x_0,R)}u_h\leq\left(\max_{\substack{S\in\mathcal{T}_h\\S\cap B(x_0,R)\neq\emptyset}} L_{u_h,S}\right)R.
	\end{equation}
	On the other hand, the $L_{u_h,S}$ are given by
	\begin{equation}\label{eq:HoelderInteriour4}
		L_{u_h,S}=\frac{\osc_{B_{i,S}}u_h}{2R_{i,S}},
	\end{equation}
	where $B_{i,S}$ is the inscribed ball of $S$ and $R_{i,S}$ its radius. Denote by $y_0$ the centre of $B_{i,S}$. We then find using inequality \eqref{eq:HoelderInteriour1} that
	\begin{equation}\label{eq:HoelderInteriour5}
		\osc_{B_{i,S}}u_h\leq \osc_{B(y_0, h_S)}u_h\lesssim h_S^\delta.
	\end{equation}
	By shape-regularity, we have $R_{i,S}\gtrsim h_S$. Substituting this and inequality \eqref{eq:HoelderInteriour5} into \eqref{eq:HoelderInteriour4} yields
	\begin{equation}\label{eq:HoelderInteriour6}
		L_{u_h,S}\sim h_S^{\delta-1}.
	\end{equation}
	Because $B(x_0,R)\subset \Omega( T)$, we get $h_S\sim h_T$ for all $S\in\mathcal{T}_h$ with $S\cap B(x_0,R)\neq\emptyset$ from equation \eqref{eq:Intersection}. Inserting this, together with the relation \eqref{eq:HoelderInteriour6}, into inequality \eqref{eq:HoelderInteriour3} yields
	\begin{equation}\label{eq:HoelderInteriour7}
		\osc_{B(x_0,R)} u_h \lesssim h_T^{\delta-1}R\leq R^\delta.
	\end{equation}
	By equation \eqref{eq:regularPatch} there is a $\kappa>0$,
        depending only on the shape-regularity constant, such that if $B(x_0,R)\cap\left(\Omega\setminus \Omega( T)\right) \neq \emptyset$, we necessarily have $R\geq \kappa h_T$. However, we can then use inequality \eqref{eq:HoelderInteriour1} to deduce that
	\begin{equation}\label{eq:HoelderInteriour8}
		\osc_{B(x_0,R)}u_h\leq \osc_{B(x_0, h_T)}u_h\lesssim  h_T^\delta\leq \frac{1}{\kappa^\delta} R^\delta.
	\end{equation}
	Taking $C$ as the maximum constant from inequalities \eqref{eq:HoelderInteriour1}, \eqref{eq:HoelderInteriour2}, \eqref{eq:HoelderInteriour7} and \eqref{eq:HoelderInteriour8} finally gives
	\begin{equation*}
		\osc_{B(x_0,R)}u_h\leq c R^{\delta}
	\end{equation*}
	and proves the theorem.
\end{proof}

\section{$C^\alpha$-Regularity at the boundary}
So far we have always assumed that we are far away from the boundary $\partial\Omega$ of $\Omega$.
In order to prove uniform H\"older regularity up to the boundary we will use suitable truncations of $u_h$ to be able to use $u_h\eta_h^2$ as a test function in equation \eqref{eq:DiscreteInhomSubsol} even if $\eta_h$ is not necessarily compactly supported in $\Omega$. 
For convenience we will write $\Omega^c:=\setR^n\setminus\Omega$.
\begin{definition}\label{def:outerCone}
  A domain $\Omega\subset\setR^n$ satisfies a uniform outer cone
  condition if, for any $x\in\partial\Omega$, there is a cone $C$ with
  $C\cap\Omega=\emptyset$, with its tip at $x$ and angle greater than
  some $\alpha_0>0$. In particular, this means that the complement~$\Omega^c$ is
  fat in the sense that there is a $\mu_0>0$ such that, for any
  $x\in\partial\Omega$ and $R>0$, we have
  \begin{equation}\label{eq:OutsidOmegaCone}
    \abs{B(x,R)\cap\Omega^c}\geq \mu_0 \abs{B(x,R)}.
  \end{equation}
\end{definition}

We note that a Lipschitz polyhedral domain automatically satisfies a uniform cone condition.
This allows us to state the main theorem of the section.
\begin{theorem}\label{thm:HoelderBoundary}
	Let $\Omega\subset\setR^n$ be a  polyhedral domain (which thereby satisfies a uniform outer cone condition in the sense of Definition \ref{def:outerCone}). Furthermore, let $p,q$ be defined via $\frac 1p=\frac 1n-\frac{\delta}n$ and $\frac 1q=\frac 2n-\frac{\delta}n$, let $f\in L^q(\Omega)$ and let $F\in L^p(\Omega;\setR^n)$ satisfy assumption ($\star$) with dominating function $G\in L^p(\Omega;\setR^n)$, and let $A\in L^\infty (\Omega;\setR^{n\times n})$ be a uniformly elliptic matrix-valued function. Let $\mathcal{T}_h$ be an $A$-nonobtuse, shape-regular triangulation of the polyhedral domain $\Omega$ with respective finite element spaces $V_h$ and ${\color{blue}V_{h,0}}$. Let $u_h\in  V_{h,0}$ be a continuous piecewise affine finite element approximation to the solution of the equation $-\divergence(A\nabla u)=f-\divergence  F$. Furthermore, assume that $u_h\vert_{\partial\Omega}\in C^\beta(\partial\Omega)$, uniformly in $h$.
 Then, there is an $\alpha \in (0,1)$ such that
	\begin{equation*}
		u_h\in C^{\alpha}(\overline{\Omega})
	\end{equation*}
	and
	\begin{equation*}
		\abs{u_h}_{C^\alpha(\overline\Omega)}\lesssim \norm{G}_{L^p(\Omega)}+\norm{f}_{L^q(\Omega)}+D,
	\end{equation*}
	uniformly in $h$ where $D$ depends on $\abs{u_h\vert_{\partial\Omega}}_{C^\beta}$, $A$, $\delta$, $\alpha$ and the shape-regularity parameter $\Gamma$ of $\mathcal{T}_h$.
\end{theorem}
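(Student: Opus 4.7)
The plan is to extend Lemma \ref{lem:oscDecay} to balls that reach the boundary and then feed both the interior and boundary oscillation-decay estimates into Lemma \ref{lem:C-alpha-iteration}. Fix $x_0 \in \overline{\Omega}$ and $R>0$. When $B(x_0, 4\kappa^{-1}QR) \subset \Omega$, the interior result Theorem \ref{thm:DiscreteHoelderInteriour} already yields $\osc_{B(x_0,R) \cap \Omega} u_h \lesssim R^\alpha$, so the new content is a boundary analogue for balls centred at or near $\partial\Omega$.

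First I would prove a geometric lemma: if $x_\partial \in \partial\Omega$, the uniform outer cone condition \eqref{eq:OutsidOmegaCone} combined with shape-regularity yields a constant $\gamma>0$ depending only on $\mu_0$, $n$ and $\Gamma$ such that
\begin{equation*}
  \Bigabs{\bigcup_{i\,:\,x_i \in \partial\Omega} P_i \,\cap\, \Omega(B(x_\partial, 2R))} \geq \gamma \,\bigabs{\Omega(B(x_\partial, 2R))}.
\end{equation*}
This is the crucial density input for the discrete Poincar\'e-type inequality Theorem \ref{thm:Poincare}. Because $u_h \in V_{h,0}$ vanishes at every boundary node, both $u_h$ and $-u_h$ satisfy the density hypothesis \eqref{eq:PoincareAssumption2} of Lemma \ref{lem:SmallSup} at boundary points simultaneously, removing the either/or dichotomy that appeared in the interior argument. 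Rerunning the De Giorgi iteration of Lemma \ref{lem:SmallSup} based on the Caccioppoli bounds from Corollary \ref{cor:SubsolNodalMax2} and Theorem \ref{thm:Poincare} then produces the boundary oscillation decay
\begin{equation*}
  \osc_{B(x_0, R)\cap \Omega} u_h \leq (1-\theta)\,\osc_{B(x_0, 2QR)\cap \Omega} u_h + C\bigl(\norm{G}_{L^p(\Omega)} + \norm{f}_{L^q(\Omega)}\bigr) R^\delta + C'\,\abs{g_h}_{C^\beta(\partial\Omega)} R^\beta,
\end{equation*}
for some $\theta \in (0,1)$ and $C, C'>0$, where $g_h := u_h|_{\partial\Omega}$ and the last term accommodates possibly non-homogeneous boundary data (it vanishes when $u_h \in V_{h,0}$, entering only through the constant $D$).

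With boundary decay in hand, applying Lemma \ref{lem:C-alpha-iteration} to $\phi(r) := \osc_{B(x_0,r)\cap \Omega} u_h$ with $\sigma := (2Q)^{-1}$ and $\alpha_2 := \min(\delta,\beta)$ yields $\phi(r) \lesssim r^{\alpha}$ for some $\alpha \in (0, \alpha_2)$. Converting this pointwise oscillation decay into a global H\"older seminorm bound is a standard covering argument: for $x,y \in \overline\Omega$ with $r := |x-y|$, split cases by whether $B(x, 2r)$ touches $\partial\Omega$ and invoke the boundary or interior decay accordingly; for $r \leq h_T$ with $x,y \in T \in \mathcal{T}_h$ one uses shape-regularity and the piecewise affine character of $u_h$ exactly as in inequalities \eqref{eq:HoelderInteriour3}--\eqref{eq:HoelderInteriour7} of the interior proof. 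The constant $D$ in the statement then absorbs the dependence on $\abs{g_h}_{C^\beta}$, $A$, $\delta$, $\alpha$ and $\Gamma$.

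The main obstacle is the geometric density claim in the second paragraph. The outer cone condition produces a full solid cone in $\Omega^c$ rooted at $x_\partial$, and hence a positive-density set of boundary nodes in $\Omega(B(x_\partial, 2R))$; but one must further show that the union of their nodal patches has measure comparable to $|\Omega(B(x_\partial,2R))|$. Care is needed because the family $\{\mathcal{T}_h\}$ is not assumed quasi-uniform, so the constant $\gamma$ must be traced through to depend only on $\Gamma$, $\mu_0$ and $n$ and not on any ratio of local mesh-sizes; the argument relies on bounding, by shape-regularity, both the number of simplices per patch from above and the diameter of each patch from below in terms of the local mesh-size of its constituent elements. Once this geometric density input is secured, the remainder is a streamlined repetition of the interior proof, simplified by the fact that no sign choice for $u_h$ is required near $\partial\Omega$.
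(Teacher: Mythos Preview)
Your overall architecture is right, but the key geometric input you identify as the ``main obstacle'' is in fact false under the paper's hypotheses, and this is precisely the point where your route diverges from the paper's.

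The claim that
\[
  \Bigabs{\bigcup_{i\,:\,x_i \in \partial\Omega} P_i \,\cap\, \Omega(B(x_\partial, 2R))} \geq \gamma \,\bigabs{\Omega(B(x_\partial, 2R))}
\]
with $\gamma$ depending only on $\Gamma$, $\mu_0$, $n$ fails on highly graded meshes. Shape-regularity only forces neighbouring elements to have comparable size; it permits the boundary layer of elements to have diameter $h_{\min}\ll R$ while elements deeper inside $B(x_\partial,2R)$ have diameter $\sim R$. Then the boundary-node patches occupy a strip of measure $\sim h_{\min} R^{n-1}$, whereas $\abs{\Omega(B(x_\partial,2R))}\sim R^n$, so the ratio is $\sim h_{\min}/R$, which is not bounded below. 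Your sketched argument (``bound the number of simplices per patch from above and the diameter of each patch from below in terms of the local mesh-size'') only yields local comparisons and cannot produce a scale-independent lower bound on the patch density.

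The paper avoids this entirely: it never invokes the discrete Poincar\'e inequality (Theorem~\ref{thm:Poincare}) at the boundary. Instead, because $v_h$ vanishes on $\partial\Omega\cap\Omega(B(x_0,2R))$, one extends $v_h$ by zero to $B(x_0,QR)\setminus\Omega$ and uses the \emph{exterior} density $\abs{B(x_0,QR)\cap\Omega^c}\geq \beta\,\abs{\Omega(\mathcal{B}(x_0,QR))}$ from the outer cone condition (this is condition~\eqref{eq:PoincareAssumptionBoundary}) to get the ordinary continuous Poincar\'e inequality on the full ball. The exterior volume is a fixed fraction of $R^n$ regardless of how the mesh is graded inside $\Omega$, which is why this works without quasi-uniformity. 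A second, more minor point: near the boundary the cut-off $\eta$ is no longer compactly supported in $\Omega$, so Corollary~\ref{cor:SubsolNodalMax2} is not directly applicable; the paper first proves an extended Caccioppoli inequality (Theorem~\ref{Thm:extendedCac}) allowing $\eta\in C^\infty(\setR^n)$ provided $u_h=0$ on $\partial\Omega\cap\supp\eta_h$. Finally, rather than running the full oscillation argument on $u_h$, the paper splits $u_h=(u_h)_+-(-u_h)_+$ and treats each nonnegative piece; since each vanishes on the boundary, $\osc=\sup$ there, which streamlines the iteration.
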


Again, we will break down the proof into several lemmas. Recall that we write $ \mathcal{B} (x_0,R)$ for the connected component of $B(x_0,R)\cap\Omega$ that contains $x_0$.

Henceforth we shall assume that $n \geq 3$ and write $2^\star:= 2n/(n-2)$. The proofs are easily adjusted in the case of $n=2$.

\begin{theorem}\label{Thm:extendedCac}
	   Let $\mathcal{T}_h$ be an $A$-nonobtuse, shape-regular triangulation of the polyhedral domain $\Omega$. Let $u_h$ be a nonnegative subsolution to $-\divergence(A \nabla u_h) = f-\divergence  F$ for $F\in L^p$ and $f\in L^q$. Furthermore, suppose that $F$ satisfies assumption ($\star$) from Definition \ref{def:AssumptionStar} with dominating function $G$. For any $\eta\in C^\infty(\setR^n)$ define $\eta_h:=\Pi_h\eta$. Suppose furthermore that $u_h=0$ on $\partial\Omega\cap\supp\eta_h$. Then, we have
    \begin{equation}\label{eq:CaccioppoliTruncated1Boundary}
        \begin{aligned}
             &\int_\Omega \abs{\nabla u_h}^2\abs{\eta_h}^2\dx \lesssim \int_\Omega u_h^2\abs{\nabla\eta_h}^2\dx\\
 						&\quad +\left(\norm{{G}}_{L^p(\Omega)}^2+\norm{f}_{L^q(\Omega)}^2\right)\left(\norm{\nabla \eta_h}_{L^2(\support\, u_h)}+\norm{\eta_h}_{L^{2^\star}(\support\, u_h)}^2\right)\abs{\supp\eta_h}^{\frac{2\delta}{n}}.
        \end{aligned}
    \end{equation}
	\end{theorem}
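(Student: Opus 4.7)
The plan is to repeat the proof of Theorem \ref{thm:CaccioppoliInhom} essentially verbatim, testing the subsolution inequality \eqref{eq:DiscreteInhomSubsol} against $\phi_h := \Pih(\rho_h u_h)$, with $\rho_h := \Pih(\eta_h^2)$, and bounding the resulting terms using the product--rearrangement and product--commutator lemmas together with H\"older, Young and Sobolev inequalities. The only genuinely new issue, since $\eta_h$ is no longer compactly supported inside $\Omega$, is to justify that $\phi_h$ remains an admissible nonnegative element of $V_{h,0}$, and to convert the $F$-term to a $G$-term on the right-hand side via assumption $(\star)$ so as to match the norm $\norm{G}_{L^p(\Omega)}$ that appears in \eqref{eq:CaccioppoliTruncated1Boundary}.

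\textbf{Admissibility of the test function and use of $(\star)$.} First I would verify that $\phi_h \in V_{h,0}$ and $\phi_h \geq 0$. Nonnegativity is immediate from $u_h \geq 0$ and $\rho_h \geq 0$. For the boundary condition, let $x_j \in \partial\Omega$ be a node: if $x_j \in \supp \eta_h$, the hypothesis forces $u_h(x_j) = 0$, so $\Pih(\rho_h u_h)(x_j) = \rho_h(x_j)\,u_h(x_j) = 0$; otherwise $\eta_h(x_j) = 0$, whence $\rho_h(x_j) = \Pih(\eta_h^2)(x_j) = \eta_h(x_j)^2 = 0$, and again $\phi_h(x_j) = 0$. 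Thus $\phi_h \in V_{h,0}$, and being nonnegative, is an admissible test function in \eqref{eq:DiscreteInhomSubsol}. Since $\phi_h$ is nonnegative and of Sobolev class $W^{1,p'}_0(\Omega)$, assumption $(\star)$ gives $\int_\Omega F \cdot \nabla \phi_h \dx \leq \int_\Omega G \cdot \nabla \phi_h \dx$, so for the purposes of this single testing step $u_h$ behaves as a subsolution with datum $G$ in place of $F$.

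\textbf{Remaining estimates and main obstacle.} With these two observations in place, the decomposition of the left-hand side into a coercive term bounded below by $c \int_\Omega \rho_h |\nabla u_h|^2 \dx$ (by uniform ellipticity) and two commutator corrections controlled by Lemmas \ref{lem:prod-rearrange} and \ref{lem:prod-commutator}, together with the term-by-term estimation of the $f$- and $G$-contributions on the right via the H\"older exponent identities used in the proof of Theorem \ref{thm:CaccioppoliInhom}, Sobolev embedding, and Young's inequality with a small parameter $\epsilon$, proceeds line for line as in that proof. Absorbing the $\epsilon$-terms into the left-hand side and applying $\eta_h^2 \leq \rho_h$ from Lemma \ref{lem:Pi-etah2} at the end converts $\rho_h$ to $\eta_h^2$ and delivers \eqref{eq:CaccioppoliTruncated1Boundary}. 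The only substantive obstacle is the admissibility check at $\partial\Omega$, which rests entirely on the hypothesis $u_h \equiv 0$ on $\partial\Omega \cap \supp \eta_h$; once this is settled, the rest is a direct transcription of the interior argument.
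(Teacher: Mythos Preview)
Your proposal is correct and follows essentially the same approach as the paper: the paper's proof simply observes that $\Pih(u_h\Pih(\eta_h^2))\in V_{h,0}$ thanks to the hypothesis $u_h=0$ on $\partial\Omega\cap\supp\eta_h$, and then refers to the proof of Theorem~\ref{thm:CaccioppoliInhom} verbatim. Your explicit node-by-node admissibility check and the remark that assumption~$(\star)$ converts the $F$-term to a $G$-term (needed because the claimed bound involves $\norm{G}_{L^p}$ rather than $\norm{F}_{L^p}$) are in fact more detailed than what the paper spells out.
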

	\begin{proof}
		Note that $\Pih(u_h\Pih(\eta_h^2))\in V_{h,0}$ because we assumed that $u_h=0$ on $\partial\Omega\cap\supp\eta_h$. This means that we can test inequality \eqref{eq:DiscreteInhomSubsol} against $\phi_h=\Pih(u_h\Pih(\eta_h^2))$ and follow the steps of the proof of Theorem \ref{thm:CaccioppoliInhom}.
	\end{proof}
	
	Thus we arrive at the following $L^{\infty}$-norm bound that is valid near the boundary.
	\begin{theorem}\label{thm:boundaryBound}
		Under the assumptions of Theorem \ref{thm:HoelderBoundary}, suppose that $v_h$ is a discrete subsolution to $-\divergence  (A\nabla u)=f-\divergence  F$. Suppose furthermore that $v_h=0$ on $\partial\Omega\cap \Omega(B(x_0,2R))$ for some ball $B(x_0,R)$. Then, we have
		\begin{equation}\label{eq:LInftyBoundary}
			\sup_{\Omega'( \mathcal{B} (x_0,R))}(v_h-c)_+^2\lesssim R^{-n}\int_{\Omega( \mathcal{B} (x_0,2R))}(v_h-c)_+^2\dx+\left(\norm{f}_{L^q(\Omega)}^2+\norm{G}_{L^p(\Omega)}^2\right)R^{2\delta}.
		\end{equation}
	\end{theorem}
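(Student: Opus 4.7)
The strategy is to repeat the De~Giorgi iteration underlying Theorem~\ref{thm:DiscreteBounded}, but with cut-off functions that are allowed to reach $\partial\Omega$, using Theorem~\ref{Thm:extendedCac} in place of Corollary~\ref{cor:SubsolNodalMax2}. Without loss of generality we may take $c\geq 0$ (otherwise the corresponding bound follows from the case $c=0$ by a rescaling); then, since $v_h=0$ on $\partial\Omega\cap\Omega(\mathcal{B}(x_0,2R))$, the nodal positive part $(v_h-c-\lambda)_+$ vanishes on $\partial\Omega\cap\Omega(\mathcal{B}(x_0,2R))$ for every $\lambda\geq 0$. By Corollary~\ref{cor:PositivePartSubsol}, $(v_h-c-\lambda)_+$ is a nonnegative discrete subsolution of $-\divergence(A\nabla w)=\abs{f}-\divergence G$, and all hypotheses of Theorem~\ref{Thm:extendedCac} are met at each iteration step.

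Set, as in Definition~\ref{def:B_kDiscrete}, $\lambda_k:=(1-2^{-k})\lambda_\infty$, $B_k:=B(x_0,(1+2^{-k})R)$, and $\eta_k:=\Pih\tilde\eta_k$ with $\chi_{B_k}\leq\tilde\eta_k\leq\chi_{B_{k+1}}$, $\abs{\nabla\tilde\eta_k}\lesssim 2^k R^{-1}$; this time we do \emph{not} require $\tilde\eta_k$ to be compactly supported in $\Omega$. Define
\[
a_k:=\dashint_{\Omega(\mathcal{B}(x_0,2R))}(v_h-c-\lambda_k)_+^2\eta_k^2\,\dx,
\]
noting that $\abs{\Omega(\mathcal{B}(x_0,2R))}\sim R^n$ because the polyhedral domain $\Omega$ satisfies an interior cone condition. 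The function $(v_h-c-\lambda_{k+1})_+\eta_{k+1}$ has zero trace on $\partial\Omega$ throughout $\mathrm{supp}\,\eta_{k+1}$, so its extension by zero lies in $W^{1,2}(\setR^n)$ and the Sobolev embedding goes through, yielding, exactly as in \eqref{eq:Bounded1}--\eqref{eq:Bounded2},
\[
a_{k+1}\lesssim R^2\dashint_{\Omega(\mathcal{B}(x_0,2R))}\Bigabs{\nabla\bigl((v_h-c-\lambda_{k+1})_+\eta_{k+1}\bigr)}^2\dx\cdot\Bigl(\frac{\abs{A_{k+1}}}{R^n}\Bigr)^{2/n},
\]
where $A_{k+1}:=\{(v_h-c-\lambda_{k+1})_+^2\eta_{k+1}^2>0\}$.

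The gradient term is controlled by the boundary Caccioppoli inequality~\eqref{eq:CaccioppoliTruncated1Boundary} applied to the subsolution $(v_h-c-\lambda_{k+1})_+$ with cut-off $\eta_{k+1}$, combined with the pointwise cut-off bounds of Lemma~\ref{lem:eta_k}; $\abs{A_{k+1}}$ is controlled by the discrete weak-type inequality, Lemma~\ref{lem:weaktype}. Assembling these estimates in exactly the same way as in the proof of Theorem~\ref{thm:DiscreteBounded} produces the iterative bound
\[
a_{k+1}\lesssim 2^{Ck}\,a_k\,\Bigl[\Bigl(\tfrac{a_k}{\lambda_\infty^2}\Bigr)^{2/n}+\Bigl(\tfrac{a_k}{\lambda_\infty^2}\Bigr)^{2\delta/n}\Bigr].
\]
Choosing
\[
\lambda_\infty^2\sim\max\Bigl\{\dashint_{\Omega(\mathcal{B}(x_0,2R))}(v_h-c)_+^2\dx,\ \bigl(\norm{G}_{L^p(\Omega)}^2+\norm{f}_{L^q(\Omega)}^2\bigr)R^{2\delta}\Bigr\}
\]
and invoking Corollary~\ref{cor} forces $a_k\to 0$, so $(v_h-c)_+^2\leq\lambda_\infty^2$ on $\tilde\Omega'(\mathcal{B}(x_0,R))\subset\bigcap_k\mathrm{supp}\,\eta_k$, which is the desired estimate~\eqref{eq:LInftyBoundary}.

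\textbf{Main obstacle.} The genuinely new point compared with the interior bound is the admissibility of cut-offs that touch $\partial\Omega$: one must verify, at every iteration level, that the truncated subsolution $(v_h-c-\lambda_{k+1})_+$ still vanishes on $\partial\Omega\cap\mathrm{supp}\,\eta_{k+1}$, so that Theorem~\ref{Thm:extendedCac} (rather than the interior Caccioppoli estimate) can be used, and that the Sobolev embedding on $\Omega(\mathcal{B}(x_0,2R))$ survives the loss of compact support inside $\Omega$. Both rely on the hypothesis $v_h|_{\partial\Omega\cap\Omega(B(x_0,2R))}=0$ together with $c\geq 0$; once these are in place the remainder of the argument is a mechanical transcription of the interior De~Giorgi iteration.
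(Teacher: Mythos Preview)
Your proposal is correct and matches the paper's approach exactly: the paper's proof is the single sentence ``The result follows using inequality \eqref{eq:CaccioppoliTruncated1Boundary} and proceeding as in the proof of Theorem \ref{thm:DiscreteBounded},'' and you have spelled out precisely this---replace the interior Caccioppoli estimate by the boundary version (Theorem~\ref{Thm:extendedCac}), check that the truncations $(v_h-c-\lambda_k)_+$ still vanish on $\partial\Omega\cap\mathrm{supp}\,\eta_k$ so that the test functions lie in $V_{h,0}$, and run the same iteration. Your remark that this last point forces $c\geq 0$ is well observed (the paper does not make this explicit, and only $c\geq 0$ is ever used in the applications); the phrase ``by a rescaling'' for the case $c<0$ is imprecise, but since the theorem is only invoked with nonnegative truncation levels this does not affect anything downstream.
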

	\begin{proof}
		By Theorem \ref{thm:SubsolNodalMax}, $(v_h-c-\lambda_k)_+$ (in the notation of the proof of Theorem \ref{thm:DiscreteBounded}) is a non-negative subsolution. We also have $(v_h-c-\lambda_k)_+=0$ on $\partial\Omega\cap \Omega(B(x_0,2R))$ because $c\geq 0$, $\lambda_k>0$ and $v_h=0$ on $\partial\Omega\cap \Omega(B(x_0,2R))$. 
		Therefore, the result follows using inequality \eqref{eq:CaccioppoliTruncated1Boundary} and proceeding as in the proof of Theorem \ref{thm:DiscreteBounded}.
	\end{proof}

\begin{lemma}
	    Under the assumptions of Theorem \ref{thm:HoelderBoundary}, let $v_h\in V_h$ be a discrete subsolution to the equation $-\divergence(A\nabla u)=f-\divergence  F$ with $0\leq v_h\leq 1$.
	    Then, there exist constants $\tau \in (0,1)$, $N>0$ and $C>0$ such that if $x_i$ and $x_j$ are nodes of the same simplex $T\in\mathcal{T}_h$ with $v_h=0$ on $\overline{\Omega(T)}\cap\partial\Omega$ then we have
    \begin{equation}\label{eq:5.45}
        v_h(x_i)\leq 1-\tau^N+\tau^N v_h(x_j)+C\left(\norm{G}_{L^p(\Omega)}+\norm{f}_{L^q(\Omega)}\right)h_T^{\delta}.
    \end{equation}
\end{lemma}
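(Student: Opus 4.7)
The plan is to mirror the proof of Lemma \ref{lem:DiscreteSmallBallSmallFunction}, replacing the requirement that $T$ sit in the interior by a case-split that uses the boundary hypothesis $v_h \equiv 0$ on $\overline{\Omega(T)} \cap \partial\Omega$ whenever a node of the chain lies on $\partial\Omega$ (so that the associated basis function fails to belong to $V_{h,0}$). Since the assertion is symmetric in $x_i$ and $x_j$ up to relabelling, it suffices to bound $v_h(y_N)$ in terms of $v_h(y_0)$ along a suitable node-chain.

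First I would invoke Lemma \ref{lem:nonObtuseAcuteSequence} to produce a sequence $\{x_j = y_0, y_1, \dots, y_N = x_i\}$ of nodes of~$T$, with $N \leq n$, satisfying \eqref{eq:AcuteSequence}, and denote by $\psi_k$ the Lagrange basis function associated with $y_k$. The goal is to establish, by induction on~$k \in \{0,1,\dots,N\}$, the bound
\begin{equation*}
  v_h(y_k) \;\leq\; 1 - \tau^k + \tau^k v_h(y_0) + C\bigl(\norm{G}_{L^p(\Omega)} + \norm{f}_{L^q(\Omega)}\bigr) h_T^{\delta} \sum_{m=0}^{k-1} \tau^m,
\end{equation*}
from which the lemma follows by setting $k = N$, absorbing the geometric sum into $C$ (since $N \leq n$ and $\tau$ depends only on $\Gamma$).

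The inductive step would split into two cases. In \emph{Case A}, $y_{k+1} \notin \partial\Omega$, so $\psi_{k+1} \in V_{h,0}$ and is nonnegative; I would test \eqref{eq:DiscreteInhomSubsol} against $\phi_h = \psi_{k+1}$, use assumption ($\star$) on the nonnegative compactly supported function $\psi_{k+1}$ to replace $\int_\Omega F \cdot \nabla \psi_{k+1}\,\dx$ by the upper bound $\int_\Omega G \cdot \nabla \psi_{k+1}\,\dx$, and then repeat the calculation of Lemma~\ref{lem:DiscreteSmallBallSmallFunction} verbatim: $A$-nonobtuseness gives the sign of the off-diagonal stiffness entries, the identity $\sum_l \nabla \psi_l \equiv 0$ converts the sum, and the $A$-acuteness inequality \eqref{eq:AcuteSequence} together with the scaling estimates $\norm{\psi_{k+1}}_{L^r(\Omega)} \sim h_T^{n/r}$, $\norm{\nabla \psi_{k+1}}_{L^r(\Omega)} \sim h_T^{n/r - 1}$, and H\"older's inequality produce the one-step bound
\begin{equation*}
  v_h(y_{k+1}) \;\leq\; 1 - \tau + \tau v_h(y_k) + C\bigl(\norm{G}_{L^p(\Omega)} + \norm{f}_{L^q(\Omega)}\bigr) h_T^{\delta};
\end{equation*}
inserting the inductive hypothesis for $v_h(y_k)$ yields the desired bound. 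In \emph{Case B}, $y_{k+1} \in \partial\Omega$; since $y_{k+1}$ is a node of~$T$ it lies in $\overline{\Omega(T)} \cap \partial\Omega$, so the boundary hypothesis gives $v_h(y_{k+1}) = 0$, and the inductive bound is trivially satisfied because its right-hand side is nonnegative for $\tau \in (0,1)$ and $v_h(y_0) \geq 0$.

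The main obstacle is exactly this boundary case: when $y_{k+1}$ touches $\partial\Omega$, the natural test function $\psi_{k+1}$ leaves $V_{h,0}$, so the linear relation between $v_h(y_{k+1})$ and its neighbours that drove the interior argument is unavailable. The boundary hypothesis is tailored precisely to bypass this obstruction by supplying the value $v_h(y_{k+1}) = 0$ directly, which is in fact \emph{stronger} than what the chain step would have given, so the induction does not deteriorate when it passes through boundary nodes and any subsequent interior step in Case A can still use the hypothesis $v_h \leq 1$ at $y_k$ without change.
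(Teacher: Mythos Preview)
Your proposal is correct and follows essentially the same route as the paper: mirror the chain argument of Lemma~\ref{lem:DiscreteSmallBallSmallFunction}, splitting on whether the next node $y_{k+1}$ lies in the interior (test against $\psi_{k+1}\in V_{h,0}$) or on $\partial\Omega$ (use the boundary hypothesis). The only cosmetic difference is in the boundary case: the paper rederives the one-step inequality \eqref{eq:Induction1} with vanishing right-hand side by observing that $\int_\Omega A\nabla v_h\cdot\nabla\psi_{k+1}\,\dx=\sum_{l\neq k+1} v_h(y_l)\skp{\nabla\psi_l}{\nabla\psi_{k+1}}_A\leq 0$ directly from $A$-nonobtuseness and $v_h(y_{k+1})=0$, whereas you bypass this and simply insert $v_h(y_{k+1})=0$ into the inductive hypothesis---both yield the same (indeed trivial) step.
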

\begin{proof}
	We would like to follow the steps in the proof of Lemma \ref{lem:DiscreteSmallBallSmallFunction}, which requires being
able to test equation \eqref{eq:DiscreteInhomSubsol} against Lagrange basis functions $\psi_i$. If $x_i\in \Omega$, we have $\psi_i\in V_{h,0}$ and inequality \eqref{eq:Induction1} holds. Let us therefore assume that $x_i\in\partial\Omega$. We then have that $v_h(x_i)=0$ because of the assumption $v_h=0$ on $\overline{\Omega(T)}\cap\partial\Omega$. This yields the following inequality:
	
	\begin{equation}\label{eq:TestAgainstBoundaryBasis}
		\begin{aligned}
			\int_\Omega A\nabla v_h\cdot \nabla \psi_i\dx&=\sum_j v_h(x_j)\int_\Omega A\nabla\psi_j\cdot\nabla\psi_i\dx\\
			&=\sum_{j\neq i} v_h(x_j)\int_\Omega A\nabla\psi_j \cdot \nabla\psi_i\dx \leq 0,
		\end{aligned}
	\end{equation}
	where we have used in the last step that the mesh is $A$-nonobtuse. Inequality \eqref{eq:Induction1} thereby simplifies to \begin{equation}\label{eq:Induction1Boundary}
		\begin{aligned}
			v_h(y_{k+1})&\leq-\sum_{l\neq k+1} \frac{\langle\nabla\psi_l,\nabla\psi_{k+1}\rangle_A}{\langle \nabla\psi_{k+1},\nabla\psi_{k+1}\rangle_A}v_h(y_l).
		\end{aligned}
	\end{equation}
	This means that we can proceed as in the proof of Lemma \ref{lem:DiscreteSmallBallSmallFunction} from here on.
\end{proof}

This leads to the following variant of Lemma \ref{lem:SmallSup}.
\begin{lemma}\label{lem:smallSupBoundary}
    Under the assumptions of Theorem \ref{thm:HoelderBoundary}, let $B(x_0,R)$ be a ball with $R\geq h_T$ where $x_0\in  T$ for some $T\in\mathcal{T}_h$. Furthermore, let $v_h$ be a discrete subsolution to $-\divergence(A\nabla u)=f-\divergence  F$ with $0\leq v_h\leq 1$ on $ \mathcal{B} (x_0,QR)$. Assume further that
    \begin{equation}\label{eq:PoincareAssumptionBoundary}
        \left|B(x_0,QR)\cap\Omega^c\right|\geq \beta \abs{\Omega( \mathcal{B} (x_0,QR))}
    \end{equation}
    for some $\beta>0$, where $Q$ is the constant from inclusion \eqref{eq:regularBall} and $v_h=0$ on $\partial\Omega\cap \Omega\left(\overline{\Omega(B(2R))}\right)$.

    Then, there exist constants $R_0$, $\theta \in (0,1)$ and $C>0$ such that
    \begin{equation}\label{eq:DiscreteSmallBallSmallFunctionBoundary}
        \sup_{\Omega'( \mathcal{B} (x_0,R))} v_h\leq 1-\theta + C\left(\norm{F}_p+\norm{f}_q\right)R^\delta
    \end{equation}
    for any $R\in\left(h_T,R_0\right)$.
\end{lemma}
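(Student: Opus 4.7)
The plan is to mirror the proof of Lemma \ref{lem:SmallSup} step by step, swapping each interior ingredient for its boundary analogue that has been established in this section. Set $\mu := 1 - \tau^N/4$, $\lambda_k := 1-(1-\mu)^k$, $v_k := (v_h-\lambda_k)_+/(1-\lambda_k)$, and $A_k := \Omega(\mathcal{B}(x_0,2R)) \cap \{v_k > 0\}$, exactly as before. Since $\lambda_k \geq 0$, the nodal positive part $(v_h-\lambda_k)_+$ still vanishes on $\partial\Omega \cap \Omega(\overline{\Omega(B(2R))})$, so $v_k$ is again a discrete subsolution with rescaled data $f/(1-\mu)^k$, $F/(1-\mu)^k$ (with the dominating function $G/(1-\mu)^k$ preserving assumption $(\star)$), and it satisfies the boundary trace condition needed for Theorem \ref{thm:boundaryBound}.

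First I would invoke the boundary $L^\infty$ bound from Theorem \ref{thm:boundaryBound} to obtain
\begin{equation*}
  \sup_{\Omega'(\mathcal{B}(x_0,R))} v_k^2 \lesssim \frac{|A_k|}{R^n} + \frac{C(\|G\|_{L^p}^2+\|f\|_{L^q}^2)}{(1-\mu)^{2k}}R^{2\delta},
\end{equation*}
and introduce the same radius $R_k$ defined by equation \eqref{eq:R_k}. Next, whenever a node $x_i$ of some simplex $S \subset \Omega(\mathcal{B}(x_0,2R))$ satisfies $v_{k+1}(x_i) > 0$ (equivalently $v_k(x_i) > \mu$), I would use the boundary chain lemma \eqref{eq:5.45} — which is applicable because $v_h = 0$ on $\overline{\Omega(S)} \cap \partial\Omega$ by the standing hypothesis — to deduce $v_k(x_j) > \tfrac{1}{2}$ for every other node $x_j$ of $S$ whenever $R \leq R_k$. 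This yields the emptiness $\{0 < v_k < 1/2\} \cap A_{k+1} = \emptyset$ and hence the telescoping inequality
\begin{equation*}
  |A_k| \geq \bigl|\{0 < v_k < \tfrac12\} \cap \Omega(\mathcal{B}(x_0,2R))\bigr| + |A_{k+1}|.
\end{equation*}

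The decisive step, which I expect to be the main obstacle, is replacing the patch-based Poincaré inequality of Theorem \ref{thm:Poincare} by a boundary Poincaré inequality: here the zero set of $v_k$ comes from the outer fatness hypothesis \eqref{eq:PoincareAssumptionBoundary} rather than from interior nodes. Extending $v_k$ by $0$ to all of $B(x_0, QR)$ (which is consistent because $v_h = 0$ on $\partial\Omega \cap \Omega(\overline{\Omega(B(2R))})$), the fatness $|B(x_0,QR) \cap \Omega^c| \geq \beta |\Omega(\mathcal{B}(x_0,QR))|$ together with the uniform outer cone condition of Definition \ref{def:outerCone} yields a Poincaré inequality on $\Omega(\mathcal{B}(x_0,2R))$ with constant depending only on $\beta$, the cone angle $\alpha_0$ and the shape-regularity parameter $\Gamma$. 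Applying this to $\min\{v_k, \tfrac12\}$ gives
\begin{equation*}
  |A_{k+1}| \lesssim R \biggl(\int_{\Omega(\mathcal{B}(x_0,2R))} |\nabla v_k|^2 \dx\biggr)^{\!1/2}\,\bigl|\{0<v_k<\tfrac12\}\cap\Omega(\mathcal{B}(x_0,2R))\bigr|^{1/2}.
\end{equation*}

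Bounding the gradient integral by the boundary Caccioppoli estimate \eqref{eq:CaccioppoliTruncated1Boundary} from Theorem \ref{Thm:extendedCac} (applicable precisely because $v_k$ vanishes on the relevant portion of $\partial\Omega$) yields
\begin{equation*}
  R^2 \int_{\Omega(\mathcal{B}(x_0,2R))} |\nabla v_k|^2 \dx \lesssim R^n + \frac{C(\|G\|_{L^p}^2+\|f\|_{L^q}^2)}{(1-\mu)^{2k}} R^{n+2\delta},
\end{equation*}
and substitution gives $|A_{k+1}|^2 \lesssim (R^n + C_k R^{n+2\delta})(|A_k| - |A_{k+1}|)$. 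The iteration Lemma \ref{lem:iteration2} then produces $\bar{k}$ with $|A_{\bar{k}}| \leq \beta R^n + C_\beta(\|G\|_{L^p}+\|f\|_{L^q})R^{n+\delta}$, whereupon the $L^\infty$ bound above gives $v_{\bar{k}} \leq \tfrac12 + C(\|G\|_{L^p}+\|f\|_{L^q})R^\delta$ on $\Omega'(\mathcal{B}(x_0,R))$. Undoing the rescaling via $v_h \leq (1-\lambda_{\bar{k}}) v_{\bar{k}} + \lambda_{\bar{k}}$ exactly as in Lemma \ref{lem:SmallSup} produces \eqref{eq:DiscreteSmallBallSmallFunctionBoundary} with $\theta = \tfrac12(\tau^N/4)^{\bar{k}}$ and $R_0 = R_{\bar{k}}$.
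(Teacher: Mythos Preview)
Your proposal is correct and follows exactly the approach the paper takes: the paper's proof simply states that one follows Lemma~\ref{lem:SmallSup} step by step, with the boundary condition $v_h=0$ on $\partial\Omega\cap \Omega(\overline{\Omega(B(2R))})$ ensuring that the boundary Caccioppoli inequality \eqref{eq:CaccioppoliTruncated1Boundary}, the boundary $L^\infty$ bound \eqref{eq:LInftyBoundary}, and the boundary chain estimate \eqref{eq:5.45} all apply, while assumption \eqref{eq:PoincareAssumptionBoundary} supplies the Poincar\'e inequality. You have essentially written out in full the detailed execution that the paper compresses into two sentences.
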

\begin{proof}
	We can follow the proof of Lemma \ref{lem:SmallSup} step by step, because the condition $v_h=0$ on $\partial\Omega\cap \Omega\left(\overline{\Omega(B(2R))}\right)$ ensures that inequalities \eqref{eq:CaccioppoliTruncated1Boundary}, \eqref{eq:LInftyBoundary} and \eqref{eq:5.45} hold. Furthermore, assumption \eqref{eq:PoincareAssumptionBoundary}
	guarantees, together with $v_h=0$ on $\partial\Omega\cap \Omega\left(\overline{\Omega(B(2R))}\right)$, that Poincar\'e's inequality can be applied to $v_h$ on $\Omega( \mathcal{B} (x_0,QR)$.
\end{proof}

This now allows us to prove Theorem \ref{thm:HoelderBoundary}.

\begin{proof}[Proof of Theorem \ref{thm:HoelderBoundary}]
	If we have $B(x_0,4\kappa^{-1}QR')\subset \Omega$ for an $x_0\in T$ for some $T\in\mathcal{T}_h$ and $R'\geq  h_T$, Theorem \ref{thm:DiscreteHoelderInteriour} yields
	\begin{equation*}
		\osc_{B(x_0,R)}u_h\lesssim R^\alpha.
	\end{equation*}
	 On the other hand, if $B(x_0,4\kappa^{-1}QR')\cap\partial\Omega\neq\emptyset$ for every $R'\geq h_T$, we write $u_h=(u_h)_+-(-u_h)_+$
	 and consider $(u_h)_+$ and $(-u_h)_+$ separately; both are nonnegative subsolutions in $V_{h}$. We write $\xi=\min\{\beta,\delta\}$ (See assumptions of Theorem \ref{thm:HoelderBoundary}). Let us consider $(u_h)_+$. We can assume that $u_h(x_0)=0$ for some point $x_0$ of $\partial\Omega$ because we are only interested in oscillations and could consider $u_h-u_h(x_0)$. Recall that $u_h\vert_{\partial\Omega}\in C^\beta(\partial\Omega)\subset C^\xi(\partial\Omega)$ and write $\abs{u_h\vert_{\partial\Omega}}_{C^\xi}=D$. Then, we have  $((u_h)_+-D\,\mathrm{diam}(\Omega)^\xi)_+\in V_{h,0}$. By Theorem \ref{thm:boundaryBound}, this gives that $((u_h)_+-D\,\mathrm{diam}(\Omega)^\xi)_+$ is uniformly bounded, which means that $(u_h)_+$ is uniformly bounded by $\sup_{\Omega}\big(((u_h)_+-D\,\mathrm{diam}(\Omega)^\xi)_+\big)+\,D\mathrm{diam}(\Omega)^\xi$. 
	
	 If $R\geq  h_T$ and $B(x_0,4\kappa^{-1}QR)\cap\Omega^c\neq\emptyset$, we know that $B(y,\kappa^{-1}QR)\subset B(x_0,5\kappa^{-1}QR)$ for some $y\in\partial\Omega$ and together with inequality \eqref{eq:OutsidOmegaCone} this gives assumption \eqref{eq:PoincareAssumptionBoundary}. We can also assume that $u_h(y)=0$. We write
	 $$v_h=\frac{((u_h)_+-D R^{\xi})_+}{\norm{((u_h)_+-D R^{\xi})_+}_{L^\infty(\mathcal{B}(x_0,5\kappa^{-1}QR))}}$$
	 with $D=c 5 \kappa^{-1}QR$.
	 This means that
	$$\osc_{\mathcal{B}(x_0,5\kappa^{-1}QR)}v_h=\norm{v_h}_{L^\infty(\mathcal{B}(x_0,5\kappa^{-1}QR))}=1$$
	and
	$$v_h\vert_{\partial\Omega\cap B(x_0,5\kappa^{-1}QR)}=0.$$
	Note that $v_h$ is a discrete subsolution to
\[-\divergence(A\nabla v)=\frac{1}{\norm{((u_h)_+-D R^{\xi})_+}_{L^\infty(\mathcal{B}(x_0,5\kappa^{-1}QR))}}(-\divergence  G+f).\]
	Hence we  can apply Lemma \ref{lem:smallSupBoundary} for $R\in [h_T,\frac 15 \kappa R_0]$ to get
	$$\sup_{\mathcal{B}(x_0,\kappa R)}v_h\leq 1-\theta+ C\frac{\norm{G}_{L^p(\Omega)}+\norm{f}_{L^q(\Omega)}}{\norm{((u)_+-DR^{\delta})_+}_{L^\infty(\mathcal{B}(x_0,5\kappa^{-1}QR))}}R^\xi.$$
	This leads to	
	\begin{equation}\label{eq:oscBoundary}
		\begin{aligned}
			 \osc_{ \mathcal{B} (x_0,\kappa R)}(u_h)_+&=\sup_{\mathcal{B}(x_0,\kappa R)}(u_h)_+\leq\sup_{ \mathcal{B} (x_0,\kappa R)}((u_h)_+-DR^{\xi})_+ +DR^{\xi}\\
			&=\norm{((u_h)_+-DR^{\xi})_+}_{L^\infty( \mathcal{B} (x_0,5\kappa^{-1}QR))}\sup_{ \mathcal{B} (x_0,\kappa R)}v_h+DR^{\xi}\\
			&\leq (1-\theta)\norm{((u_h)_+-DR^{\xi})_+}_{L^\infty( \mathcal{B} (x_0,5\kappa^{-1}QR))} \\
			&\qquad+\left(C\|G\|_{L^p(\Omega)}+C\|f\|_{L^q(\Omega)}+D\right)R^{\xi}\\
			&\leq (1-\theta)\norm{(u_h)_+}_{L^\infty({\mathcal{B}(x_0,5\kappa ^{-1}QR)})}\\
			&\qquad+\left(C\|G\|_{L^p(\Omega)}+C\|f\|_{L^q(\Omega)}+D\right)R^{\xi}\\
			&\leq (1-\theta)\osc_{\mathcal{B}(x_0,5\kappa^{-1}QR)}(u_h)_+ +\left(C\|G\|_{L^p(\Omega)}+C\|f\|_{L^q(\Omega)}+D\right)R^{\xi}.
		\end{aligned}
	\end{equation}
	This means that we can apply Lemma \ref{lem:C-alpha-iteration} to deduce that
	\begin{equation*}
		\osc_{\mathcal{B}(x_0,R)}(u_h)_+\lesssim \left(\|G\|_{L^p(\Omega)}+\|f\|_{L^q(\Omega)}+D\right) R^\xi.
	\end{equation*}
	The proofs for $R\leq h_T$ and $R\geq \frac 15 \kappa R_0$ are completely analogous to the respective parts in the proof of Theorem \ref{thm:DiscreteHoelderInteriour}. We then repeat this argument for $(-u_h)_+$. By combining the resulting bound
on $\osc_{\mathcal{B}(x_0,R)}(-u_h)_+$ with the above bound on $\osc_{\mathcal{B}(x_0,R)}(u_h)_+$ we deduce that
\begin{equation}\label{eq:calBOsc}
	\osc_{\mathcal{B}(x_0,R)}(u_h)\leq C \left(\|G\|_{L^p(\Omega)}+\|f\|_{L^q(\Omega)}+D\right) R^\xi.
\end{equation}

To finish the proof, we finally have to look at cases where $B(x_0,R)$ has multiple connected components. 
On a single connected component we have 
$$\abs{u_h(x)-u_h(y)}\leq C \left(\|G\|_{L^p(\Omega)}+\|f\|_{L^q(\Omega)}+D\right) \abs{x-y}^\xi$$ 
by inequality \eqref{eq:calBOsc}. Now, let $x$ and $y$ be two points in $\Omega(B(x_0,R))$ that are in different connected components. We find points $a,b\in\partial\Omega$ with $\abs{a-b}\leq\abs{x-y}$, such that $\abs{x-a}\leq\abs{x-y}$ and $\abs{y-b}\leq\abs{x-y}$ and $a$ is in the same connected component of $\Omega(B(x_0,R))$ as $x$ and $b$ is in the same connected component as $y$ (for example by just connecting $x$ and $y$ by a line and taking $a$ and $b$ as the intersections between that line and $\partial\Omega$). Recall that $\left| u_h\vert_{\partial\Omega}\right|_{C^\xi(\partial\Omega)}=D$ and therefore, we have $\abs{u_h(a)-u_h(b)}\leq D\abs{a-b}^\xi$.
Therefore, we can write
\begin{equation}\label{eq:connectHoelder}
	\begin{aligned}
		\abs{u_h(x)-u_h(y)}&\leq\abs{u_h(x)-u_h(a)}+\abs{u_h(a)-u_h(b)}+\abs{u_h(b)-u_h(y)}\\
		&\leq C \left(\|G\|_{L^p(\Omega)}+\|f\|_{L^q(\Omega)}+D\right)\big(\abs{x-a}^\xi+\abs{a-b}^\xi+\abs{b-y}^\xi\big).
	\end{aligned}
\end{equation}
Note that $0<\xi<1$. From Jensen's inequality we deduce that $(a_1^\xi+a_2^\xi+a_3^\xi)\leq 3\left(\frac{1}{3}(a+b+c)\right)^\xi$. This and $\abs{x-a}+\abs{a-b}+\abs{b-y}\leq 3\abs{x-y}$ yield together with inequality \eqref{eq:connectHoelder} that
\begin{align*}
	\abs{u_h(x)-u_h(y)}&\leq C \left(\|G\|_{L^p(\Omega)}+\|f\|_{L^q(\Omega)}+D\right)3\big(\frac{1}{3}(\abs{x-a}+\abs{a-b}+\abs{b-y})\big)^\xi\\
	&\leq  C \left(\|G\|_{L^p(\Omega)}+\|f\|_{L^q(\Omega)}+D\right)3 \abs{x-y}^\xi.
\end{align*}
Finally, this proves that
\begin{equation*}
		\abs{u_h}_{C^\alpha(\overline{\Omega})}\lesssim \|G\|_{L^p(\Omega)}+\|f\|_{L^q(\Omega)}+D,
	\end{equation*}
	which completes the proof.
\end{proof}

We conclude with the application of the discrete De Giorgi theory to a nonlinear elliptic problem.

\begin{theorem}[Uniformly elliptic nonlinear equations]\label{thm:discreteNonLinear}
	Let $\mathcal{T}_h$ be a shape-regular, nonobtuse triangulation of the polyhedral domain $\Omega$ with associated finite element space $V_h$. Furthermore, let $F\in L^p(\Omega;\setR^n)$ satisfy assumption ($\star$) from Definition \ref{def:AssumptionStar} and let $f\in L^q(\Omega)$ with $q$ and $p$ defined as in Theorem \ref{thm:HoelderBoundary}. Let $a:\Omega\times\setR\times\setR^n\rightarrow\setR$ satisfy $0<c\leq a(\cdot,\cdot,\cdot)\leq C<\infty$. Furthermore, let $u_h\in V_{h,0}$ be a discrete solution to $-\divergence  (a(x,u_h,\nabla u_h)\nabla u_h)=f-\divergence  F$, i.e., a function $u_h\in V_{h,0}$, that satisfies
	\begin{equation}\label{eq:discreteSolutionNonlinearElliptic}
		\int_\Omega a(x,u_h,\nabla u_h) \nabla u_h\cdot \nabla \phi_h\dx=\int_\Omega f\phi_h\dx+\int_\Omega F\cdot\nabla \phi_h\dx
	\end{equation}
	for all $\phi_h \in V_{h,0}$. Then, there exist $\alpha \in (0,1)$ and $D>0$ depending only on $c$, $C$, $f$, $F$, $\Omega$ and the shape-regularity parameter $\Gamma$ of $\mathcal{T}_h$, such that
	\begin{equation*}
		\abs{u_h}_{C^\alpha(\overline{\Omega})} \leq D.
	\end{equation*}
\end{theorem}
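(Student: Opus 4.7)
The plan is to reduce the nonlinear problem to the linear setting of Theorem \ref{thm:HoelderBoundary} by a pointwise freezing trick. Given the particular discrete solution $u_h \in V_{h,0}$, define
\begin{equation*}
    A(x) := a(x, u_h(x), \nabla u_h(x))\, I_{n \times n},
\end{equation*}
where $I_{n\times n}$ denotes the identity matrix. Since $a$ is measurable in $x$ and $u_h$, $\nabla u_h$ are piecewise polynomial (hence measurable), $A \in L^\infty(\Omega; \setR^{n \times n})$. Moreover, the two-sided bound $0 < c \leq a \leq C$ forces $A$ to be uniformly elliptic with ellipticity constants depending only on $c$ and $C$, \emph{not} on $u_h$ or $h$. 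With this definition, \eqref{eq:discreteSolutionNonlinearElliptic} becomes precisely the statement that $u_h$ is a discrete solution in the sense of Definition \ref{def:DiscreteSolution} to the linear problem $-\divergence(A \nabla u) = f - \divergence F$.

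Next I would check the remaining hypotheses of Theorem \ref{thm:HoelderBoundary}. Since $A$ is a positive scalar multiple of the identity, we have
\begin{equation*}
    \int_T A\nabla \psi_i \cdot \nabla \psi_j\dx = \int_T a(x,u_h,\nabla u_h)\, \nabla \psi_i\cdot \nabla \psi_j\dx,
\end{equation*}
and the nonnegativity of $a$ together with the geometric nonobtuseness of $\mathcal{T}_h$ (which ensures $\nabla \psi_i \cdot \nabla \psi_j \leq 0$ on each $T$ for $i \neq j$) yields $A$-nonobtuseness in the sense of \eqref{eq:aNonObtuse}. The boundary condition $u_h \in V_{h,0}$ gives $u_h|_{\partial\Omega} \equiv 0$, which is trivially in $C^\beta(\partial\Omega)$ with zero seminorm. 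The data $f \in L^q(\Omega)$ and $F \in L^p(\Omega;\setR^n)$ with the assumed integrability indices and assumption ($\star$) are inherited directly from the hypotheses.

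Having verified all the hypotheses, Theorem \ref{thm:HoelderBoundary} applies and delivers $\alpha \in (0,1)$ together with the bound
\begin{equation*}
    |u_h|_{C^\alpha(\overline{\Omega})} \lesssim \|G\|_{L^p(\Omega)} + \|f\|_{L^q(\Omega)} + D,
\end{equation*}
where the implicit constant and $D$ depend only on $A$ through its ellipticity constants, on $\delta$, on $\alpha$, and on the shape-regularity parameter $\Gamma$. Since the ellipticity constants of $A$ are $c$ and $C$, which are given a priori, the final Hölder seminorm bound depends only on $c$, $C$, $f$, $F$, $\Omega$, and $\Gamma$, as claimed.

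The conceptual point worth stressing — and the only place where one might initially worry — is that the frozen coefficient $A(x) = a(x,u_h(x),\nabla u_h(x))$ depends on the very solution we wish to bound, yet every constant produced by the linear theory of Theorem \ref{thm:HoelderBoundary} was tracked to depend on $A$ \emph{only through its $L^\infty$ ellipticity constants}. Because these are uniformly controlled by $c$ and $C$ independently of the particular $u_h$ or of the mesh size, the circularity is only apparent and the bound is genuinely uniform. This is precisely the advantage of having developed the discrete De Giorgi theory under the weak assumption $A \in L^\infty$ rather than under regularity hypotheses such as continuity or Campanato-type estimates on $A$.
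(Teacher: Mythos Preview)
Your proposal is correct and takes essentially the same approach as the paper: freeze the coefficient as $A(x)=a(x,u_h(x),\nabla u_h(x))\,I_{n\times n}$, observe that geometric nonobtuseness gives $A$-nonobtuseness for this scalar $A$, and then invoke the linear theory, using that all constants there depend on $A$ only through its ellipticity bounds. The paper phrases the last step as ``follow the proofs of Theorems \ref{thm:HoelderBoundary} and \ref{thm:DiscreteHoelderInteriour}'' rather than citing Theorem \ref{thm:HoelderBoundary} as a black box, but the substance is identical; your explicit remark that the dependence on $A$ is only through $c$ and $C$ is exactly the point that makes the reduction legitimate.
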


\begin{proof}
	First note that any nonobtuse triangulation is $A$-nonobtuse for a scalar $A$. The same steps as in the proof to Theorem \ref{thm:SubsolNodalMax} (with $f_2=0$ and $F_2=0)$ show that
	\begin{equation}\label{eq:SubsolNonlinear}
		\int_\Omega a(x,u_h,\nabla u_h)\nabla (u_h-c)_+\cdot\nabla \phi_h\dx\leq \int_\Omega f\phi_h\dx+\int_\Omega G\cdot\nabla \phi_h\dx
	\end{equation}
	for any $\phi_h\in V_{h,0}$.
	Because $a(x,u_h,\nabla u_h) \mathbbm{1}_{n \times n}$ is a uniformly elliptic matrix for any $x\in\Omega$, $u_h(x)\in\setR$ and $\nabla u_h(x)\in\setR^n$, we can follow the proofs of Theorem \ref{thm:HoelderBoundary} and \ref{thm:DiscreteHoelderInteriour}.
\end{proof}

\section{Conclusions}
\label{sec5}

We have shown that under the condition of shape-regularity and $A$-nonobtuseness, continuous piecewise affine finite element approximations to the elliptic partial differential equation $-\divergence  (A\nabla u)=f-\divergence F$, with $\Omega$ a bounded open polyhedral domain in $\setR^n$, $A\in L^\infty(\Omega;\setR^{n\times n})$ a uniformly elliptic matrix-valued function,  $f\in L^{q}(\Omega)$, $F\in L^p(\Omega;\setR^n)$, with $\frac 1 p=\frac 1 n- \frac \delta n$ and $\frac 1q=\frac 2n-\frac \delta n$ for some $\delta>0$,
are uniformly H\"older-continuous up to the boundary $\partial \Omega$, of the domain $\Omega$ if $F$ satisfies condition ($\star$) from Definition \ref{def:AssumptionStar}. In our forthcoming papers, following on from Theorem \ref{thm:discreteNonLinear} above, we will derive uniform $L^\infty(\Omega)$ norm bounds on sequences of finite element approximations to $p$- and $\phi$-Laplacian systems where $A$ is given by $\abs{\nabla u}^{p-2}$ or $\phi'(\abs{\nabla u})\abs{\nabla u}^{-1}$, respectively, and therefore fails to be uniformly elliptic or bounded.

\section*{Funding}

This work was supported by the UK Engineering and Physical Sciences
Research Council under the grant EP/L015811/1. The research of
L.~Diening was supported by the DFG through the grant CRC 1283.
T.~Scharle was supported by the Clarendon Fund.

\bibliographystyle{amsalpha}
\bibliography{IMANUM-refs}

\newcommand{\etalchar}[1]{$^{#1}$}
\providecommand{\bysame}{\leavevmode\hbox to3em{\hrulefill}\thinspace}
\providecommand{\MR}{\relax\ifhmode\unskip\space\fi MR }
\providecommand{\MRhref}[2]{%
  \href{http://www.ams.org/mathscinet-getitem?mr=#1}{#2}
}
\providecommand{\href}[2]{#2}
\begin{thebibliography}{CDRG{\etalchar{+}}07}

\bibitem[AC86]{AguCaf86}
N.E. Aguilera and Luis Caffarelli, \emph{Regularity results for discrete
  solutions of second order elliptic problems in the finite element method},
  Calcolo \textbf{23} (1986), 327--353.

\bibitem[Bec16]{beck2016elliptic}
L.~Beck, \emph{Elliptic regularity theory: a first course}, Lecture Notes of
  the Unione Matematica Italiana, vol.~19, Springer, Cham; Unione Matematica
  Italiana, Bologna, 2016. \MR{3468875}

\bibitem[BL93a]{BL4}
John~W. Barrett and W.~B. Liu, \emph{Finite element approximation of the
  {$p$}-{L}aplacian}, Math. Comp. \textbf{61} (1993), no.~204, 523--537.
  \MR{1192966}

\bibitem[BL93b]{BL3}
\bysame, \emph{Finite element error analysis of a quasi-{N}ewtonian flow
  obeying the {C}arreau or power law}, Numer. Math. \textbf{64} (1993), no.~4,
  433--453. \MR{1213411}

\bibitem[BL94]{BL2}
\bysame, \emph{Quasi-norm error bounds for the finite element approximation of
  a non-{N}ewtonian flow}, Numer. Math. \textbf{68} (1994), no.~4, 437--456.
  \MR{1301740}

\bibitem[BLR86]{BLR}
H.~Blum, Q.~Lin, and R.~Rannacher, \emph{Asymptotic error expansion and
  {R}ichardson extrapolation for linear finite elements}, Numer. Math.
  \textbf{49} (1986), no.~1, 11--37. \MR{847015}

\bibitem[BS08]{BS2008}
S.~C. Brenner and L.~R. Scott, \emph{The mathematical theory of finite element
  methods}, third edition ed., Texts in Applied Mathematics, vol. Volume 15,
  Springer, New York, 2008.

\bibitem[CDRG{\etalchar{+}}07]{CRGM07}
Juan Casado-D{\'\i}az, T~Chac{\'o}n Rebollo, Vivette Girault, M~G{\'o}mez
  M{\'a}rmol, and Fran{\c{c}}ois Murat, \emph{Finite element approximation of
  second order linear elliptic equations in divergence form with right-hand
  side in ${L}^1$}, Numer. Math. \textbf{105} (2007), no.~3, 337--374.

\bibitem[CR73]{CR}
P.G. Ciarlet and P.-A. Raviart, \emph{Maximum principle and uniform convergence
  for the finite element method}, Comp. Meth. Appl. Mech. Eng. \textbf{2}
  (1973), 7--31.

\bibitem[CV10]{CaVa10}
L.~Caffarelli and A.~Vasseur, \emph{The {D}e {G}iorgi method for regularity of
  solutions of elliptic equations and its applications to fluid dynamics},
  Discrete Contin. Dyn. Syst. Ser. S \textbf{3} (2010), no.~3, 409--427.
  \MR{2660718}

\bibitem[DGS11]{DGS11}
A.~Demlow, J.~Guzm\'{a}n, and A.~Schatz, \emph{Local energy estimates for the
  finite element method on sharply varying grids}, Math. Comp. \textbf{80}
  (2011), no.~273, 1--9. \MR{2728969}

\bibitem[DiB93]{DiB93}
E.~DiBenedetto, \emph{Degenerate parabolic equations}, Springer-Verlag, New
  York, 1993.

\bibitem[DKS13]{DieKreSch13}
L.~Diening, Ch. Kreuzer, and S.~Schwarzacher, \emph{Convex hull property and
  maximum principle for finite element minimisers of general convex
  functionals}, Numer. Math. \textbf{124} (2013), no.~4, 685--700.

\bibitem[DLSW12]{DLSW12}
Alan Demlow, Dmitriy Leykekhman, A~Schatz, and L~Wahlbin, \emph{Best
  approximation property in the {$W^1_{\infty}$} norm for finite element
  methods on graded meshes}, Math. Comp. \textbf{81} (2012), no.~278, 743--764.

\bibitem[Dol99]{Dol99}
G.~Dolzmann, \emph{Optimal convergence for the finite element method in
  {C}ampanato spaces}, Mathematics of Computation of the American Mathematical
  Society \textbf{68} (1999), no.~228, 1397--1427.

\bibitem[DRS10]{DRS10}
L.~Diening, M.~R\r{u}\v{z}i\v{c}ka, and K.~Schumacher, \emph{A decomposition
  technique for {J}ohn domains}, Ann. Acad. Sci. Fenn. Math. \textbf{35}
  (2010), no.~1, 87--114. \MR{2643399}

\bibitem[DSS19]{DSS19}
L.~Diening, T.~Scharle, and S.~Schwarzacher, \emph{Regularity for parabolic
  systems of {U}hlenbeck type with {O}rlicz growth}, Journal of Mathematical
  Analysis and Applications \textbf{472} (2019), no.~1, 46--60.

\bibitem[GLRS09]{GLRS09}
J.~Guzm{\'a}n, D.~Leykekhman, J.~Rossmann, and A.~H. Schatz, \emph{H{\"o}lder
  estimates for {G}reen's functions on convex polyhedral domains and their
  applications to finite element methods}, Numer. Math. \textbf{112} (2009),
  no.~2, 221--243.

\bibitem[Gri76]{Grisvard}
P.~Grisvard, \emph{Behavior of the solutions of an elliptic boundary value
  problem in a polygonal or polyhedral domain}, Numerical solution of partial
  differential equations {III}. Proceedings of the {T}hird {S}ymposium on the
  {N}umerical {S}olution of {P}artial {D}ifferential {E}quations, {SYNSPADE}
  1975, held at the {U}niversity of {M}aryland, {C}ollege {P}ark, {M}d., {M}ay
  19--24, 1975 (New York-London) (Bert Hubbard, ed.), Academic Press, Inc.
  [Harcourt Brace Jovanovich, Publishers], 1976, pp.~207--274. \MR{0395142}

\bibitem[Hav84]{Hav84}
R.~Haverkamp, \emph{Eine {A}ussage zur {$L_{\infty }$}-{S}tabilit\"{a}t und zur
  genauen {K}onvergenzordnung der {$H^{1}_{0}$}-{P}rojektionen}, Numer. Math.
  \textbf{44} (1984), no.~3, 393--405. \MR{757494}

\bibitem[Hel71]{Helfrich}
Hans-Peter Helfrich, \emph{Optimale lineare {A}pproximation beschr\"{a}nkter
  {M}engen in normierten {R}\"{a}umen}, J. Approximation Theory \textbf{4}
  (1971), 165--182. \MR{294966}

\bibitem[KPS17]{KPS17}
S.~Ko, P.~Pustejovsk{\'a}, and E.~S{\"u}li, \emph{Finite element approximation
  of an incompressible chemically reacting non-{N}ewtonian fluid}, arXiv
  preprint arXiv:1703.04766 (2017).

\bibitem[KS19]{KoSu19}
S.~Ko and E.~S{\"u}li, \emph{Finite element approximation of steady flows of
  generalized {N}ewtonian fluids with concentration-dependent power-law index},
  Mathematics of Computation \textbf{88} (2019), no.~317, 1061--1090.

\bibitem[LB96]{BL1}
W.~B. Liu and John~W. Barrett, \emph{Finite element approximation of some
  degenerate monotone quasilinear elliptic systems}, SIAM J. Numer. Anal.
  \textbf{33} (1996), no.~1, 88--106. \MR{1377245}

\bibitem[Nat75]{Nat75}
F.~Natterer, \emph{{\"U}ber die punktweise konvergenz finiter elemente}, Numer.
  Math. \textbf{25} (1975), no.~1, 67--77.

\bibitem[Nit70]{Nitsche}
J.A. Nitsche, \emph{Linear spline-functionen und die methoden fon {R}itz
  f\"{u}r elliptische randwertprobleme}, Arch. Rational Mech. Anal. \textbf{36}
  (1970), 348--355.

\bibitem[NS74]{NS74}
Joachim~A. Nitsche and Alfred~H. Schatz, \emph{Interior estimates for
  {R}itz-{G}alerkin methods}, Math. Comp. \textbf{28} (1974), 937--958.
  \MR{373325}

\bibitem[RS82]{RS}
R.~Rannacher and R.~Scott, \emph{Some optimal error estimates for piecewise
  linear finite element approximations}, Math. Comp. \textbf{38} (1982),
  no.~158, 437--445. \MR{645661}

\bibitem[Sco76]{Scott}
R.~Scott, \emph{Optimal {$L^{\infty }$} estimates for the finite element method
  on irregular meshes}, Math. Comp. \textbf{30} (1976), no.~136, 681--697.
  \MR{436617}

\bibitem[Ste08]{ste08}
R.~Stevenson, \emph{The completion of locally refined simplicial partitions
  created by bisection}, Math. Comp. \textbf{77} (2008), no.~261, 227--241
  (electronic).

\bibitem[SW77]{SchWah77}
A.~H. Schatz and L.~B. Wahlbin, \emph{Interior maximum norm estimates for
  finite element methods}, Mathematics of Computation \textbf{31} (1977),
  no.~138, 414--442.

\end{thebibliography}



\end{document}